
%
%

%
\documentclass{svjour3}                     
%

%



\usepackage[version=3]{mhchem} 
\usepackage{tabularx} 
\usepackage{algorithmic} 
\usepackage{algorithm} 
\usepackage{enumerate} 
\usepackage{appendix} 
\usepackage{amsmath}
\usepackage{amsfonts}
\usepackage{amsbsy}
\usepackage{amssymb}
\usepackage{amsthm}
\usepackage{graphicx,subfigure} 
\usepackage{caption}

\usepackage{appendix}

\usepackage{setspace}
\usepackage{array}
\onehalfspacing       
\usepackage{fmtcount} 
\usepackage{verbatim} 
\newtheorem{assumption}{Assumption}


\usepackage[left=45mm, right=45mm, top=30mm, bottom=30mm]{geometry}



\usepackage{tikz}
\usetikzlibrary{matrix,shapes,arrows,positioning,chains}

\tikzstyle{decision} = [diamond, draw, very thick, text width=4.5em, text badly centered, node distance=2.5cm]
\tikzstyle{block} = [rectangle, draw, very thick, text width=6em, text centered, rounded corners, node distance=2.0cm,  minimum height=2em]
\tikzstyle{line} = [draw, -latex']
\tikzstyle{cloud} = [draw, very thick, ellipse, node distance=2cm,minimum height=2em]

\usepackage{ctable} 
\raggedbottom


\begin{document}
	
	\title{A joint decomposition method for global optimization of multiscenario nonconvex mixed-integer nonlinear programs
	}

	\titlerunning{Joint decomposition for multiscenario MINLP}        
	
	\author{Emmanuel Ogbe        \and
		Xiang Li 
	}
	
	
	\institute{Emmanuel Ogbe \at
	Department of Chemical Engineering, 
		Queen's University, Kingston, ON, Canada, K7L 3N6\\         
		\and
		Xiang Li\at
		Department of Chemical Engineering, 
		Queen's University, Kingston, ON, Canada, K7L 3N6\\
		Tel.: +1-613-533-6582 \\
		Fax: +1-613-533-6637 \\
		\email{xiang.li@queensu.ca} \\         
	}
	
	\date{Received: date / Accepted: date}

 \maketitle

\begin{abstract}
	This paper proposes a joint decomposition method that combines Lagrangian decomposition and generalized Benders decomposition, to efficiently solve multiscenario nonconvex mixed-integer nonlinear programming (MINLP) problems to global optimality, without the need for explicit branch and bound search. In this approach, we view the variables coupling the scenario dependent variables and those causing nonconvexity as complicating variables. We systematically solve the Lagrangian decomposition subproblems and the generalized Benders decomposition subproblems in a unified framework. The method requires the solution of a difficult relaxed master problem, but the problem is only solved when necessary. Enhancements to the method are made to reduce the number of the relaxed master problems to be solved and ease the solution of each relaxed master problem. We consider two scenario-based, two-stage stochastic nonconvex MINLP problems that arise from integrated design and operation of process networks in the case study, and we show that the proposed method can solve the two problems significantly faster than state-of-the-art global optimization solvers. 
\end{abstract}

\keywords{Generalized Benders decomposition \and Dantzig-Wolfe decomposition  \and Lagrangian decomposition  \and Joint decomposition  \and Mixed-integer nonlinear programming  \and Global optimization \and Stochastic programming}

\newpage

\section{Introduction} \label{sec:sec1}

Global optimization is a field of mathematical programming devoted to obtaining global optimal solutions; and it has over the years found enormous applications in Process Systems Engineering (PSE). Mixed-integer nonlinear programs are global optimization problems where some decision variables are integer while others are continuous. Discrete decisions and nonconvex nonlinearities introduce combinatorial behavior for such problems \cite{floudas1995nam} \cite{tawarmalani2004goo}. 
Various applications of mixed-integer nonlinear programming for PSE systems include natural gas network design and operation \cite{li2010spp}, gasoline blending and scheduling problems \cite{li2011}, expansion of chemical processes \cite{sahinidis1991cpo}, reliable design of software \cite{Berman1993} \cite{floudas1999hot}, pump network problem \cite{adjiman2000goo} \cite{westerlund1994oop}, chemical process design synthesis \cite{Duran1986}, planning of facility investments for electric power generation \cite{Bloom1983}, etc.

As adopted for mixed-integer linear programing (MILP), branch-and-bound has been employed for global optimization of nonconvex mixed-integer nonlinear programs (MINLP) \cite{Falksoland1969} \cite{Soland1971} \cite{tawarmalani2004goo}. The method entails systematically generating lower and upper bounds of the optimal objective function value over subdomains of the search space. The lower bounds can be generated via convex relaxations (such as McCommick relaxations \cite{mccormick1976cog}) or \textit{Lagrangian relaxation} (or called \textit{Lagrangian decomposition}) \cite{Fisher1981}\cite{cynthia1998} \cite{Guignard1987}. Ways of generating multipliers for the Lagrangian subproblem exist, including subgradient methods \cite{Held1974}, cutting plane methods \cite{Fisher1981}, and the Dantzig-Wolfe master problem (also known the restricted Lagrangian master problem) \cite{van1983cross} \cite{OgbeLi2016}. 

Branch-and-bound based strategies can be improved by incorporation of domain reduction techniques. Domain reduction entails eliminating portions of the feasible domain based on feasibility and optimality. Bound tightening or contraction \cite{Zamora1999}, range reduction \cite{ryoo1996bat} and generation of cutting planes \cite{misener-floudas2014} are different domain reduction strategies that have been successful in solving nonconvex problems \cite{floudas1999hot}. In bound contraction, the variable bounds are shrunk at every iteration by solving bound contraction subproblems \cite{Zamora1999}. In range reduction, the bounds on the variables are shrunk based on simple calculations using Lagrange multiplier information \cite{ryoo1996bat}. For cutting planes generation, Lagrangian relaxation information provides \textit{cuts} that is used to cut-off portion of the feasible domain that does not contain the global optimum \cite{Karuppiah2008}. Current state-of-the-art commercial deterministic global optimization solvers embody branch-and-bound and enhancements such as tighter convex relaxations and domain reduction techniques, such as the Branch-And-Reduce Optimization Navigator (BARON) \cite{tawarmalani2004goo} and Algorithms for coNTinuous/Integer Global Optimization of Nonlinear Equations (ANTIGONE) \cite{misener-floudas:ANTIGONE:2014}. They do provide rigorous frameworks for global optimization of Problem \eqref{eq:P0}. 

Branch-and-bound based methods have been successful for global optimization, mostly for small to medium sized problems. However, when the size of the problem becomes large, the branch-and-bound steps needed for convergence can be prohibitively large. A typical example of large-scale nonconvex MINLP is the following multiscenario optimization problem: 
\begin{equation} \label{eq:P0} \tag{P0}
\begin{split} 
& \min_{\substack{x_0 \\ z_1,...,z_s}} \;  \sum_{\omega=1}^{s}[ f_{0,\omega}(x_0)+ f_{\omega}(v_{\omega})]\\ 
& \textrm{s.t.}\;\; g_{0,\omega}(x_0) + g_{\omega}(v_{\omega}) \le 0, \quad \forall {\omega} \in \{1,...,s\},\\   
& \quad \;\;\; {v_{\omega}} \in {V_{\omega}}, \quad \forall {\omega} \in \{1,...,s\}, \\ 
& \quad \;\;\; x_0 \in X_0, \\ 
\end{split}
\end{equation}
where $x_0$ links $s$ subparts of the model that are indexed by $\omega$, and it is called {\it linking variable} in the paper. We assume that at least one of the functions $f_{0,\omega}: X_0 \rightarrow \mathbb{R}$, $f_{\omega}: V_{\omega} \rightarrow \mathbb{R}$, $g_{0,\omega}: X_0 \rightarrow \mathbb{R}^m$ , $g_{\omega}: V_{\omega} \rightarrow \mathbb{R}^m$ or one of the sets $X_0$ and $V_{\omega}$ is nonconvex, so Problem \eqref{eq:P0} is a nonconvex MINLP, or a nonconvex nonlinear program (NLP) if no integer variables are involved. Clearly, \eqref{eq:P0} is a large-scale problem when $s$ is large. Problem \eqref{eq:P0} has attracted more and more attention over the last 20 years in the field of PSE \cite{sahinidis2004ouu}. It usually arises from scenario-based two-stage stochastic programming \cite{birge2010its} \cite{vanslyke1969}, for which $x_0$ represents the first stage decisions that are made before the uncertainty is realized and $v_\omega$ represents second-stage decisions that are made after the uncertainty is revealed in scenario $\omega$. Functions $f_{0,\omega}$ and $f_\omega$ represent probability times costs associated with $x_0$ and $v_{\omega}$ for every scenario $\omega$. Problem \eqref{eq:P0} can also arise from integrated system design and operation problems which consider system operation over multiple time periods (but without uncertainties), such as for energy polygeneration plants \cite{chen2011} and electrical power distribution networks \cite{frank2015710}). In this case, $x_0$ represents system design decisions and $x_\omega$ represents system operational decisions for time period (or scenario) $\omega$, and $f_{0,\omega}$ and $f_{\omega}$ represent frequency of occurrence of time period $\omega$ times investment cost and operational cost, respectively. In this paper, we focus on how to efficiently solve Problem (P0) to global optimality, rather than how to generate scenarios and probabilities for stochastic programming or the time periods and their occurrence frequencies for multiperiod optimization.   

It is well-known that Problem \eqref{eq:P0} has a decomposable  structure that could be exploited for efficient solution. 
\textit{Benders decomposition} (BD) \cite{benders1962pps} (known as L-shaped method in the stochastic programming literature \cite{vanslyke1969} \cite{birge2010its}) is one class of decomposition methods applied for MILPs. Geoffrion \cite{geoffrion1972gbd} generalized BD into \textit{Generalized Benders Decomposition} (GBD), for solving convex MINLPs. Li et al. developed a further extension, called \textit{Nonconvex Generalized Benders Decomposition} \cite{li2011}, for solving nonconvex MINLPs, but this method can guarantee global optimality only if the linking variable is fully integer. Karuppiah and Grossmann applied a Lagrangian decomposition-based scheme to solve Problem \eqref{eq:P0} \cite{Karuppiah2008lbb}; in order to guarantee convergence to a global optimum, explicit branch-and-bound of linking variables are needed. They also presented bound contraction as an optional scheme in their Lagrangian-based branch-and-bound strategy. Shim et al. \cite{Shim2013} proposed a method that combines Lagrangian decomposition and BD together with branch-and-bound (to ensure convergence), in order to solve a class of bilevel programs with an integer program in the upper-level and a complementarity problem in the lower-level. A more recent algorithm combining NGBD and Lagrangian decomposition was proposed by Kannan and Barton \cite{Rohit2015}, and this algorithm also requires explicit branch-and-bound for convergence.

Efforts have been taken to achieve better computational efficiency by combining classical decomposition methods. In 1983, Van Roy proposed a \textit{cross decomposition} method that combines Lagrangian decomposition and Benders decomposition \cite{van1983cross} to solve MILP problems which do not have non-linking integer variables. Since then, a number of extensions and variants of cross decomposition have been developed \cite{vanroy1984} \cite{Holmberg1990} \cite{Holmberg1997} \cite{Deep1993} \cite{Mitra2016} \cite{OgbeLi2016}. All of these methods require that no nonconvexity comes from non-linking variables as otherwise finite convergence cannot be guaranteed. 

The performance of branch-and-bound based solution methods depends heavily on the branching and node selection strategies, but what are the best strategies for a particular problem are usually unknown. In addition, branching and node selection strategies are not able to fully exploit the problem structure. Therefore, the goal of this paper is to develop a new decomposition method for global optimization of Problem \eqref{eq:P0}, which does not require explicit branch-and-bound. The new decomposition method was inspired by cross decomposition, and it follows a similar algorithm design philosophy, combining primarily generalized Benders decomposition and Lagrangian decomposition. However, its decomposition procedure is rather different in many details due to the nonconvexity it has to deal with, so we do not call it cross decomposition, but a new name {\it joint decomposition}. To the best of our knowledge, this is the first decomposition method that can solve Problem \eqref{eq:P0} to global optimality without explicitly performing branch-and-bound (but the solution of nonconvex subproblems requires branch-and-bound based solvers). 

The remaining part of the article is organized as follows. In section 2, we give a brief introduction to generalized Benders decomposition and Lagrangian decomposition, using a reformulation of Problem \eqref{eq:P0}. Then in section 3, we present the basic joint decomposition algorithm and the convergence proof. Section 4 discusses enhancements to the basic joint decomposition algorithm, including domain reduction and use of extra convex relaxation subproblems. The joint decomposition methods are tested with two case study problems adapted from the literature, and the simulation results demonstrate the effectiveness and the computational advantages of the methods. The article ends with concluding remarks in section 6. 

\section{Problem reformulation and classical decomposition methods} \label{sec:sec2}
In order to bring up the joint decomposition idea, we reformulate Problem \eqref{eq:P0} and briefly discuss how the reformulated problem can be solved via classical GBD and LD methods. The reformulation starts from separating the convex part and the nonconvex part of the problem, and it ends up in the following  form:

\begin{equation} \label{eq:P} \tag{P}
\begin{split} 
& \min_{\substack{x_0, x_{1},...,x_{s} \\ y_{1},...,y_{s}}} \;\sum_{\omega=1}^s c_\omega^Tx_{\omega}\\ 
& \textrm{s.t.}  \quad  \; \; x_{0}  = H_{\omega}x_{\omega}, \quad \forall \omega \in \{1,...,s\}, \\   
& \quad \quad \; \;\; A_{\omega}x_{\omega} + B_{\omega}y_{\omega} \le 0, \quad \forall \omega \in \{1,...,s\}, \\   
& \quad \quad \;\;\; x_{0} \in {X_0}, \\
& \quad \quad \;\;\; x_{\omega} \in X_{\omega}, \; {y_{\omega}} \in {Y_{\omega}}, \quad \forall \omega \in \{1,...,s\}, \\ 
\end{split}
\end{equation}
where set $X_\omega \subset \mathbb{R}^{n_x}$ is convex, set $Y_{\omega} \subset \mathbb{R}^{n_y}$ is nonconvex, and set $x_0 \subset \mathbb{R}^{n_0}$ can be either convex or nonconvex. The first group of equations in \eqref{eq:P} are \textit{nonanticipativity constraints} (NACs) \cite{Guignard1987}\cite{Caroe199937} \cite{Karuppiah2008}, where matrix $H_\omega \in \mathbb{R}^{n_0} \times \mathbb{R}^{n_{x}}$ selects from $x_{\omega}$ the duplicated $x_0$ for scenario $\omega$. The details of transforming \eqref{eq:P0} to \eqref{eq:P} are provided in Appendix A. 

$x_0$ and $y_\omega$ are the two reasons why Problem \eqref{eq:P} is difficult to solve. Linking variables $x_0$ couple different subparts of the model and they cause nonconvexity if set $X_0$ is nonconvex. Variables $y_{\omega}$ cause nonconvexity due to the nonconvexity of set $Y_\omega$. If the values of $x_0$ and $y_{\omega}$ are fixed, the problem will be much easier to solve. Therefore, in this paper we call $x_0$ and $y_{\omega}$ {\it complicating variables}. In order to distinguish the two sets of variables, we also call $x_0$ {\it linking variables}, and $y_0$ {\it non-linking complicating variables}. We also call $x_\omega$ {\it non-complicating variables}. 

The classical GBD method can be used to solve Problem (\ref{eq:P}) by treating $x_0$ and $y_{\omega}$ as complicating variables, while the LD method can be used to solve Problem (\ref{eq:P}) by dualizing NACs so that $x_0$ no long links different scenarios. In the next two subsections we briefly introduce GBD and LD for Problem (\ref{eq:P}), and we make the following assumptions for Problem (\ref{eq:P}) for convenience of discussion. 

\begin{assumption} \label{ass:boundedness}
$X_0$, $X_{\omega}$ and $Y_{\omega}$ for all $\omega  \in \{1,...,s\}$ are non-empty and compact. 
\end{assumption}

\begin{assumption} \label{ass:slater}
After fixing $(x_0, y_1, \cdots, y_s )$ to any point in $X_0 \times Y_1 \times \cdots \times Y_s$, if Problem (\ref{eq:P}) is feasible, it satisfies Slater condition.  
\end{assumption}

Assumption \ref{ass:boundedness} is a mild assumption, as for most real-world applications, the variables are naturally bounded and the functions involved are continuous. If a discontinuous function is involved, it can usually be expressed with continuous functions and extra integer variables. Assumption \ref{ass:slater} ensures strong duality of convex subproblems that is required for GBD. If this assumption is not satisfied for a problem, we can treat the non-complicating variables that fail the Slater condition to be complicating variables, so that after fixing all complicating variables the Slater condition is satisfied.

\subsection{Generalized Benders decomposition}
At each GBD iteration $l$, fixing the complicating variables $x_0=x_0^{(l)}$, $y_{\omega}=y_{\omega}^{(l)}$ ($\forall \omega \in \{1,...,s\}$) results in an upper bounding problem that can be decomposed into the following Benders primal subproblem for each scenario $\omega$:
\begin{equation} \label{eq:BPP} \tag{BPP$_{\omega}^{(l)}$}
\begin{split}
obj_{\textrm{BPP}_\omega^{(l)}} = & \min_{\substack{ x_{\omega}}} \; \; \; c_\omega^Tx_{\omega}\\ 
& \textrm{s.t.}  \quad  \; \; x_{0}^{(l)}  = H_{\omega}x_{\omega}, \quad \\   
& \quad \quad \; \;A_{\omega}x_{\omega} + B_{\omega}y_{\omega}^{(l)} \le 0, \quad \\   
& \quad \quad \;\;  x_{\omega} \in X_{\omega}, \\ 
\end{split}
\end{equation}
$obj_{\textrm{BPP}_\omega^l}$ is the optimal objective value of \eqref{eq:BPP}. For convenience, we indicate the optimal objective value of a problem in the above way for all subproblems discussed in this paper. Obviously, $\sum_{\omega=1}^{s}obj_{\textrm{BPP}_\omega^{(l)}}$ represents an upper bound for Problem \eqref{eq:P}. If \eqref{eq:BPP} is infeasible for one scenario, then solve the following Benders feasibility subproblem for each scenario $\omega$:
\begin{equation} \label{eq:BFP} \tag{BFP$_{\omega}^{(l)}$}
\begin{split}
obj_{\textrm{BFP}_\omega^{(l)}}= & \min_{\substack{ x_{\omega},z_{1,\omega}^+,z_{1,\omega}^-,z_{2,\omega}}} \; \; \; ||{z}_{1,\omega}^+|| + ||{z}_{1,\omega}^-|| + ||z_{2,\omega}||\\  
& \textrm{s.t.}  \quad  x_{0}^{(l)}  = H_{\omega}x_{\omega}+ z_{1,\omega}^+ - z_{1,\omega}^-, \\   
& \quad \quad \; \;A_{\omega}x_{\omega} + B_{\omega}y_{\omega}^{(l)} \le {z}_{2,\omega},  \\   
& \quad \quad \;\;  x_{\omega} \in X_{\omega}, \quad z_{1,\omega}^+,z_{1,\omega}^-,z_{2,\omega} \ge 0, \\ 
\end{split}
\end{equation}
where ${z}_{1,\omega}^+$, ${z}_{1,\omega}^-$, and ${z}_{2,\omega}$ are slack variables. Note that \eqref{eq:BFP} is always feasible according to Assumption \ref{ass:boundedness}. Solution of \eqref{eq:BFP} provides a feasibility cut (that is described below), which prevents the generation of the same infeasible $x_0^{l}$ and $y_\omega^{(l)}$ \cite{lasdon1970spp}. 

At the same iteration, the following Benders relaxed master problem is solved to yield a lower bound for Problem \eqref{eq:P}:
\begin{equation} \label{eq:BRMP}  \tag{BRMP$^{(l)}$}
\begin{split}
&\min_{\substack{x_0,\eta_0,\eta_1,...,\eta_s \\ y_{1},...,y_{s}}} \eta_0 \\ 
& \textrm{s.t.} \quad \eta_0 \ge \sum_{\omega=1}^{s}\eta_\omega \quad \\ 
& \quad \quad \eta_\omega  \ge  obj_{\textrm{BPP}_\omega^{(j)}} + ({ \lambda_{\omega}}^{(j)})^{\textrm{T}} B_{\omega}(y_{\omega}-y_{\omega}^{(j)})+  ({ \mu_{\omega}^{(j)}})^{\textrm{T}}\left( x_0 - x_0^{(j)} \right), \\
& \qquad \qquad \qquad \qquad \qquad \qquad \qquad \qquad  \forall \omega \in \{1,...,s\}, \quad \forall  j \in {T^{(l)}}, \\ 
& \quad \quad 0 \ge  obj_{\textrm{BFP}_\omega^{(j)}} + ({ \lambda_{\omega}}^{(j)})^{\textrm{T}} B_{\omega}(y_{\omega}-y_{\omega}^{(j)})+  ({ \mu_{\omega}^{(j)}})^{\textrm{T}}\left( x_0 - x_0^{(j)} \right), \\
& \qquad \qquad \qquad \qquad \qquad \qquad \qquad \qquad \forall \omega \in \{1,...,s\}, \quad \forall  j \in {S^{(l)}}, \\  
& \quad \quad x_0\in X_0,\\
& \quad \quad y_{\omega} \in Y_{\omega}, \quad \forall \omega \in \{1,...,s\},  \\ 
\end{split}
\end{equation}
where $\mu_\omega^{(l)}$ includes Lagrange multipliers for the first group of constraints in Problem \eqref{eq:BPP} or \eqref{eq:BFP}, and $\lambda_\omega^{(l)}$ includes Lagrange multipliers for the second group of constraints in Problem \eqref{eq:BPP} or \eqref{eq:BFP}. Set $T^{(l)}$ includes indices of Benders iterations at which only \eqref{eq:BPP} is solved, and set $S^{(l)}$ includes indices of Benders iterations at which \eqref{eq:BFP} is solved. Note that Problem \eqref{eq:BRMP}) is used in the multicut BD or GBD, which is different from the one used in the classical single cut BD or GBD. The multicut version of the Benders master problem is known to be tighter than the single cut version \cite{birge1988mat} \cite{OgbeLi2015b}, so it is considered in this paper.

\begin{remark} \label{rem:rem1}
The finite convergence property of GBD is stated and proved in \cite{geoffrion1972gbd}. In Section 3, we will provide more details in the context of our new decomposition method.  
\end{remark}

\begin{remark} \label{rem:rem2}
For \eqref{eq:P}, the relaxed master problem \eqref{eq:BRMP} can still be very difficult as its size grows with the number of scenarios. However, if most variables in \eqref{eq:P} are non-complicating variables, the size of \eqref{eq:BRMP} is much smaller than that of \eqref{eq:P}, and then \eqref{eq:BRMP} is much easier to solve than \eqref{eq:P}.   
\end{remark}

\subsection{Lagrangian decomposition}
We start discussing LD from the Lagrangian dual of Problem \eqref{eq:P} that is constructed by dualizing the NACs of the problem:
\begin{equation} \label{eq:DP}  \tag{DP}
obj_{\textrm{DP}}=\max_{\pi_1, \cdots,\pi_s \ge 0} obj_{\textrm{LS}}(\pi_1, \cdots,\pi_s),
\end{equation}
where $obj_{\textrm{LS}}(\pi_1, \cdots,\pi_s)$ is the optimal objective value of the following Lagrangian subproblem with given $(\pi_1, \cdots,\pi_s)$: 
\begin{equation} \label{eq:LS}  \tag{LS$(\pi_1, \cdots,\pi_s)$}
\begin{split} 
& \min_{\substack{x_0, x_{1},...,x_{s} \\ y_{1},...,y_{s}}} \;\sum_{\omega=1}^s[ c_\omega^Tx_{\omega}+\pi_\omega^T(x_0-H_{\omega}x_{\omega})]\\ 
& \textrm{s.t.}  \quad  \;\; A_{\omega}x_{\omega} + B_{\omega}y_{\omega} \le 0, \quad \forall \omega \in \{1,...,s\}, \\   
& \quad \quad \;\;\; x_{0} \in {X}_0, \\
& \quad \quad \;\;\; x_{\omega} \in X_{\omega}, {y_{\omega}} \in {Y_{\omega}}, \quad \forall \omega \in \{1,...,s\}. \\ 
\end{split}
\end{equation}
Due to weak duality, Problem \eqref{eq:DP} or any Lagrangian subproblem is a lower bounding problem for Problem \eqref{eq:P}. Typically, the LD method is incorporated in a branch-and-bound framework that only needs to branch on linking variables $x_0$ to guarantee convergence to an $\epsilon$-optimal solution. At each branch-and-bound node or LD iteration $k$, a set of multipliers $(\pi_1^k, \cdots, \pi_s^k)$ are selected to construct a Lagrangian subproblem for \eqref{eq:DP}, and this subproblem can be naturally decomposed into $s+1$ subproblems, i.e.,
\begin{equation} \label{eq:LS0}   \tag{LS$_0^k$}
\begin{split} 
obj_{\textrm{LS}^k_0} = & \min_{x_{0}} \; \sum_{\omega=1}^s(\pi_\omega^k)^Tx_{0} \\  
&  \textrm{s.t}  \quad  \;\;\; {x_{0}} \in {X_{0}},
\end{split}
\end{equation}
and 
\begin{equation} \label{eq:LSw}  \tag{LS$_{\omega}^k$}
\begin{split} 
& \min_{\substack{ x_{\omega}, y_{\omega}}} \; c_\omega^Tx_{\omega}-(\pi_\omega^k)^T H_{\omega}x_{\omega}\\ 
& \textrm{s.t.}  \quad \; \;  A_{\omega}x_{\omega} + B_{\omega}y_{\omega} \le 0, \quad \\   
& \quad \quad \;\;\; x_{\omega} \in X_{\omega}, \quad  {y_{\omega}} \in {Y_{\omega}}, \quad  \\ 
\end{split}
\end{equation}
for all $\omega \in \{1,\cdots, s\}$. Let $obj_{LS^k}$ be the optimal objective value of the Lagrangian subproblem, then $obj_{\textrm{LS}^k}=\sum_{\omega=1}^{s}obj_{{LS}_\omega^k} +obj_{{LS_0}^k}$. Clearly, $obj_{\textrm{LS}^k} \leq obj_{\textrm{DP}}$ always holds. If $(\pi_1^k, \cdots, \pi_s^k)$ happens to be an optimal solution of \eqref{eq:DP}, then $obj_{\textrm{LS}^k}=obj_{\textrm{DP}}$.

The upper bounds in the LD methods are typically generated by fixing $x_0$ to certain values. At each iteration $k$, an upper bounding problem, or called primal problem, is constructed via fixing $x_0=x_0^k$ (which may be the solution of \eqref{eq:LS0}), and this problem can be separated into $s$ primal subproblem in the following form:
\begin{equation}  \label{eq:PP} \tag{PP$_{\omega}^k$}
\begin{split}
obj_{\textrm{PP}_\omega^k} = &\min_{\substack{ x_{\omega}, y_{\omega}}} \; \; \; c_\omega^Tx_{\omega}\\ 
& \textrm{s.t.}  \quad  \;\; x_{0}^k  = H_{\omega}x_{\omega}, \quad \\   
& \quad \quad \; \;\; A_{\omega}x_{\omega} + B_{\omega}y_{\omega} \le 0, \quad \\   
& \quad \quad \;\;\;  x_{\omega} \in X_{\omega}, \quad  {y_{\omega}} \in {Y_{\omega}}, \quad \\ 
\end{split}
\end{equation}
Let $obj_{{PP}^k}$ be the optimal objective value of the primal problem, then $obj_{{PP}^k}=\sum_{\omega=1}^sobj_{\textrm{PP}_\omega^k}$.

For generation of multipliers, we take the idea from Dantzig-Wolfe decomposition, which is essentially a special LD method. Consider the convex hull of nonconvex set $Y_\omega$:
\[\tilde{Y}_{\omega}=\{y_{\omega} \in \mathbb{R}^{n_{y}}:y_{\omega}=\sum_{i \in I}^{}\theta_\omega^{[i]}y_{\omega}^{[i]}, \; \sum_{i \in I} \theta_\omega^{[i]}=1, \; \theta_\omega^{[i]} \ge 0, \forall i \in I\}, \]
where $y_{\omega}^{[i]}$ denotes a point in $Y_{\omega}$ that is indexed by $i$. The index set $I$ may need to be an infinite set for $\tilde{Y}_{\omega}$ being the convex hull. Replace $Y_\omega$ with its convex hull for all $\omega$ in \eqref{eq:P}, then we get the following Dantzig-wolfe master problem, or called primal master problem in this paper:
\begin{equation}  \label{eq:PMP} \tag{PMP}
\begin{split}
& \min_{\substack{x_0, \theta_1^{[i]},...,\theta_s^{[i]}\\ x_{1},...,x_{s} }} \;\sum_{\omega=1}^sc_\omega^Tx_\omega \\ 
& \textrm{s.t.}  \quad \quad \; \;\; x_{0}  = H_{\omega}x_{\omega}, \quad \forall \omega \in \{1,...,s\}, \\   
& \quad \quad \quad \; \;\; A_{\omega}x_{\omega} + B_{\omega}\sum_{i \in I}\theta_\omega^{[i]}y_{\omega}^{[i]} \le 0, \quad \forall \omega \in \{1,...,s\}, \\   
& \quad \quad \quad \;\;\; \sum_{i \in I}\theta_\omega^{[i]} = 1, \quad  \theta_\omega^{[i]} 
\ge 0 ,  \quad \forall i \in I, \quad \forall \omega \in \{1,...,s\}, \\
& \quad \quad \quad \;\;\; x_0 \in X_0, \\
& \quad \quad \quad \;\;\;  x_{\omega} \in X_{\omega}, \quad \forall \omega \in \{1,...,s\}  \\ 
\end{split}
\end{equation}
Clearly, Problem \eqref{eq:PMP} is a relaxation of Problem \eqref{eq:P}, and it is either fully convex or partially convex (as set $X_0$ can still be nonconvex). At LD iteration $k$, the following restriction of \eqref{eq:PMP} can be solved: 


\begin{equation}  \label{eq:RPMP} \tag{RPMP$^k$}
\begin{split}
& \min_{\substack{x_0,\theta_1^{[i]},...,\theta_s^{[i]} \\ x_{1},...,x_{s} }} \;\sum_{\omega=1}^s c_\omega^Tx_\omega\\ 
& \textrm{s.t.}  \quad \quad  \; \;\; x_{0}  = H_{\omega}x_{\omega}, \quad \forall \omega \in \{1,...,s\}, \\   
& \quad \quad  \quad \; \;\; A_{\omega}x_{\omega} + B_{\omega}\sum_{i \in I^k}\theta_\omega^{[i]}y_{\omega}^{[i]} \le 0, \quad \forall \omega \in \{1,...,s\}, \\   
& \quad \quad \quad \;\;\; \sum_{i \in I^k}\theta_\omega^{[i]} = 1, \quad \theta_\omega^{[i]} 
\ge 0 , \quad \forall i \in I^k,\quad \forall \omega \in \{1,...,s\},\\
& \quad \quad \quad \;\;\; x_0 \in X_0, \\
& \quad \quad \quad \;\;\; x_{\omega} \in X_{\omega}, \quad \forall \omega \in \{1,...,s\}, 
\end{split}
\end{equation}
where index set ${I^{k}} \subset I$ is finite. ${I^{k}}$ may consist of indices of $y_\omega$ that are generated in the previously solved primal problems and Lagrangian subproblems. Replacing set $I$ with set $I^k$ is a restriction operation, so \eqref{eq:RPMP} is a restriction of \eqref{eq:PMP}. Since \eqref{eq:PMP} is a relaxation of \eqref{eq:P}, \eqref{eq:RPMP} is neither a relaxation nor a restriction of \eqref{eq:P}, so it does not yield an upper or a lower bound of \eqref{eq:P}. The role of \eqref{eq:RPMP} in joint decomposition is to generate multipliers for NACs in order to construct a Lagrangian subproblem for iteration $k$. Problem \eqref{eq:RPMP} can be solved by a state-of-the-art optimization solver directly or by GBD.

Actually, we can construct a different Lagrangian dual of Problem \eqref{eq:P} by dualizing both the NACs and the second group of constraints in the problem, as what we do for GBD in the last subsection. However, this Lagrangian dual is not as tight as Problem \eqref{eq:DP} (as stated by the following proposition), so it is not preferred for a LD method. The following proposition follows from Theorem 3.1 of \cite{Guignard1987} and its proof is omitted here.

\begin{proposition} \label{prop:dualgap}
Consider the following Lagrangian dual of Problem \eqref{eq:P}:
\begin{equation} \label{eq:DP2}  \tag{DP2}
obj_{\emph{DP2}}=\max_{\substack{\mu_1,\cdots,\mu_s \ge 0 \\ \lambda_1, \cdots, \lambda_s \ge 0}} obj_{\emph{LS2}} (\mu_1, \cdots, \mu_s, \lambda_1, \cdots, \lambda_s),
\end{equation}
where 
\begin{equation*} \label{eq:LS2}  
\begin{split} 
obj_{\emph{LS2}}=& \min_{\substack{x_0, x_{1},...,x_{s} \\ y_{1},...,y_{s}}} \;\sum_{\omega=1}^s[ c_\omega^Tx_{\omega}+\mu_\omega^T(x_0-H_{\omega}x_{\omega}) + \lambda_\omega^T (A_{\omega}x_{\omega} + B_{\omega}y_{\omega}) ]\\ 
& \textrm{s.t.}  \quad  \;\; x_{0} \in {X}_0, \\
& \quad \quad \;\;\; x_{\omega} \in X_{\omega}, {y_{\omega}} \in {Y_{\omega}}, \quad \forall \omega \in \{1,...,s\}. \\ 
\end{split}
\end{equation*}
The dual gap of \eqref{eq:DP} is no larger than the dual gap of \eqref{eq:DP2}. 
\end{proposition}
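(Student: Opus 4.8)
The plan is first to reduce the comparison of duality gaps to a single inequality between the two optimal dual values, then to prove that inequality by comparing the inner subproblems termwise. Assume \eqref{eq:P} has an optimal value $obj_{\textrm{P}}$ (otherwise both gaps are infinite and nothing is to be shown). By weak duality --- already invoked in the text for \eqref{eq:DP}, and valid verbatim for \eqref{eq:DP2} --- we have $obj_{\textrm{DP}}\le obj_{\textrm{P}}$ and $obj_{\textrm{DP2}}\le obj_{\textrm{P}}$, so the dual gap of \eqref{eq:DP} is $obj_{\textrm{P}}-obj_{\textrm{DP}}\ge 0$ and the dual gap of \eqref{eq:DP2} is $obj_{\textrm{P}}-obj_{\textrm{DP2}}\ge 0$. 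Hence the proposition is equivalent to the single claim $obj_{\textrm{DP}}\ge obj_{\textrm{DP2}}$: the dual that dualizes only the NACs is at least as strong as the one that dualizes the NACs together with the constraints $A_\omega x_\omega+B_\omega y_\omega\le 0$.

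To establish $obj_{\textrm{DP}}\ge obj_{\textrm{DP2}}$, fix any $\mu_1,\dots,\mu_s\ge 0$ and $\lambda_1,\dots,\lambda_s\ge 0$ admissible for \eqref{eq:DP2}, and I would show $obj_{\textrm{LS}}(\mu_1,\dots,\mu_s)\ge obj_{\textrm{LS2}}(\mu_1,\dots,\mu_s,\lambda_1,\dots,\lambda_s)$, the left side being \eqref{eq:LS} evaluated at $\pi_\omega=\mu_\omega$. Two elementary observations about the two minimizations give this. First, the feasible set of \eqref{eq:LS} --- the points with $A_\omega x_\omega+B_\omega y_\omega\le 0$ for all $\omega$, $x_0\in X_0$, $x_\omega\in X_\omega$, $y_\omega\in Y_\omega$ --- is a subset of $X_0\times\prod_\omega(X_\omega\times Y_\omega)$, the feasible set of the subproblem defining $obj_{\textrm{LS2}}$, and enlarging the feasible region can only decrease a minimum. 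Second, on the smaller (i.e. \eqref{eq:LS}) feasible set every term $\lambda_\omega^{\textrm{T}}(A_\omega x_\omega+B_\omega y_\omega)$ is nonpositive, because there $\lambda_\omega\ge 0$ and $A_\omega x_\omega+B_\omega y_\omega\le 0$; adding $\sum_\omega\lambda_\omega^{\textrm{T}}(A_\omega x_\omega+B_\omega y_\omega)$ to the objective lowers it pointwise over that set and hence lowers its minimum. Since the $obj_{\textrm{LS2}}$ objective differs from the \eqref{eq:LS} objective (at $\pi=\mu$) exactly by this added sum, chaining the two inequalities yields $obj_{\textrm{LS}}(\mu)\ge obj_{\textrm{LS2}}(\mu,\lambda)$. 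As every $\mu_\omega\ge 0$ is admissible as $\pi_\omega$ in \eqref{eq:DP}, maximizing the left side over $\mu\ge 0$ and the right side over all admissible $(\mu,\lambda)$ gives $obj_{\textrm{DP}}=\max_{\mu\ge 0}obj_{\textrm{LS}}(\mu)\ge\max_{\mu\ge 0,\,\lambda\ge 0}obj_{\textrm{LS2}}(\mu,\lambda)=obj_{\textrm{DP2}}$, which completes the argument.

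An alternative, slightly heavier route invokes the convexification characterization of Lagrangian duals (Theorem 3.1 of \cite{Guignard1987}), which applies thanks to the compactness in Assumption \ref{ass:boundedness}: $obj_{\textrm{DP}}$ equals the value of \eqref{eq:P} with the set $\{A_\omega x_\omega+B_\omega y_\omega\le 0,\ x_0\in X_0,\ x_\omega\in X_\omega,\ y_\omega\in Y_\omega\}$ replaced by its convex hull, whereas $obj_{\textrm{DP2}}$ equals the value of \eqref{eq:P} with only $X_0\times\prod_\omega(X_\omega\times Y_\omega)$ replaced by its convex hull and the constraints $A_\omega x_\omega+B_\omega y_\omega\le 0$ kept; the first convex hull is contained in the second intersected with $\{A_\omega x_\omega+B_\omega y_\omega\le 0\}$, because that inequality is linear and hence preserved under convex combinations, so the first feasible region is contained in the second and $obj_{\textrm{DP}}\ge obj_{\textrm{DP2}}$ again. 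I do not expect a real obstacle in either route: the only points needing care are the reduction in the first paragraph (``dual gap'' is $obj_{\textrm{P}}$ minus the optimal dual value, so a larger dual value means a smaller gap, and the inequality to prove is $obj_{\textrm{DP}}\ge obj_{\textrm{DP2}}$, not its reverse) and the sign bookkeeping in the step where $\sum_\omega\lambda_\omega^{\textrm{T}}(A_\omega x_\omega+B_\omega y_\omega)$ is added --- it is nonpositive precisely because $\lambda_\omega\ge 0$ is combined with feasibility of $A_\omega x_\omega+B_\omega y_\omega\le 0$ on the \emph{un}relaxed set; otherwise the claim is just the standard monotonicity of Lagrangian bounds in the number of dualized constraints.
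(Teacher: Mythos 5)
Your proof is correct. Note, though, that the paper does not actually prove this proposition: it states that the result ``follows from Theorem 3.1 of \cite{Guignard1987}'' and omits the argument, which corresponds to the second, convexification-based route you sketch (the dual value as the optimum over the convex hull of the non-dualized constraint set, plus the inclusion $\mathrm{conv}(S\cap\{g\le 0\})\subseteq \mathrm{conv}(S)\cap\{g\le 0\}$ for affine $g$). Your primary argument is the more elementary and self-contained one: reduce the gap comparison to $obj_{\textrm{DP}}\ge obj_{\textrm{DP2}}$, then for fixed $(\mu,\lambda)$ chain the two inequalities $\min_{\text{large set}}(f+\lambda^{\textrm{T}}g)\le\min_{\text{small set}}(f+\lambda^{\textrm{T}}g)\le\min_{\text{small set}}f$, using that $\lambda\ge 0$ and $g\le 0$ on the unrelaxed set make the added term nonpositive. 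This is the standard monotonicity of Lagrangian bounds in the set of dualized constraints, and it needs nothing beyond Assumption \ref{ass:boundedness} (to ensure the inner minima are attained or are $+\infty$ when the smaller feasible set is empty, in which case the inequality holds trivially). What the citation route buys is a geometric picture of exactly where the two duals differ (which convex hull is taken); what your direct route buys is independence from the hypotheses of Guignard's theorem and a one-line sign argument. Both sign-bookkeeping points you flag are handled correctly, and restricting $\mu\ge 0$ in \eqref{eq:DP2} to match the admissible $\pi\ge 0$ in \eqref{eq:DP} keeps the final maximization step valid.
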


\section{The joint decomposition method} \label{sec:sec3}

\subsection{Synergizing LD and GBD}

In the LD method described in the last section, at each iteration the subproblems to be solved are much easier than the original problem \eqref{eq:P}, as either the size of the subproblem is independent of number of scenarios, such as \eqref{eq:PP}, \eqref{eq:LS0}, and \eqref{eq:LSw}, or the subproblem is a MILP or convex MINLP that can be solved by existing optimization solvers or by GBD relatively easily, such as \eqref{eq:RPMP}. However, without branching on the linking variables $x_0$, LD cannot guarantee finding a global solution, and we do not always know how to exploit the problem structure to efficiently branch on $x_0$ and whether the branching can be efficient enough. 

On the other hand, GBD can find a global solution, but it requires solving the nonconvex relaxed master problem \eqref{eq:BRMP} at each iteration. The size of \eqref{eq:BRMP} may be much smaller than the size of \eqref{eq:P} if most variables in \eqref{eq:P} are non-complicating variables, but \eqref{eq:BRMP} can still be difficult to solve, especially considering that it needs to be solved at each iteration and its size grows with the number of iterations. 

Therefore, there may be a way to combine LD and GBD, such that we solve as many as possible LD subproblems and Benders primal subproblems \eqref{eq:BPP} (as they are relatively easy to solve), but avoid solving many difficult Benders relaxed master problems \eqref{eq:BRMP}. This idea is similar to the one that motivates cross decomposition \cite{van1983cross}, but it leads to very different subproblems and a very different algorithmic procedure. The subproblems are very different, because for problem \eqref{eq:P}, we prefer dualizing only NACs in LD in order to achieve the smallest possible dual gap (according to Proposition \ref{prop:dualgap}), but we have to dualize both the NACs and the second group of constraints in GBD. In addition, due to the different nature of the subproblems, the order in which the subproblems are solved and how often the problems are solved are different. Therefore, we do not name the proposed method cross decomposition, but call it {\it joint decomposition} (JD).

Fig. \ref{fig:figure1} shows the basic framework of JD. Each JD iteration includes one LD iteration part, as indicated by the solid lines, and possibly one GBD iteration, as indicated by the dashed lines. In a JD iteration, the GBD iteration is performed only when the LD iteration improves over the previous LD iteration substantially. The GBD iteration is same to the one described in the last section, except that the relaxed master problem \eqref{eq:BRMP} includes more valid cuts (which will be described later). The LD iteration is slightly different from the one described in the last section. One difference is that, after solving \eqref{eq:PP} at LD iteration $k$, a Benders primal problem (BPP$^k$) is constructed using $x_0^k$ (which is used for constructing \eqref{eq:PP}) and $(y_{1}, \cdots, y_{s})$ (which is from the optimal solution of \eqref{eq:PP}). The (BPP$^k$) is solved to generate a Benders cut that can be added to \eqref{eq:BRMP}. The other difference is that \eqref{eq:RPMP}, \eqref{eq:LS0}, \eqref{eq:LSw} (decomposed from (LS$^k$)) slightly differ from the ones described in the last section, and they will be described later.     

\begin{remark} \label{rem:rem3}
The JD method requires that all subproblems can be solved using an existing optimization solver within reasonable time. If this requirement is not met, then JD does not work, or we have to further decompose the difficult subproblems into smaller, solvable subproblems.  
\end{remark}

\usetikzlibrary{arrows,positioning,shapes}

\begin{figure}[tbp]
	\begin{center}
		\centering
		\begin{tikzpicture}[node distance=10mm, >=latex',
		block/.style = {draw=black,rounded corners,very thick, rectangle, minimum height=10mm, minimum width=28mm,align=center},
		rblock/.style = {draw, rectangle, very thick, rounded corners=0.5em},
		tblock/.style = {draw, trapezium, minimum height=10mm, 
			trapezium left angle=75, trapezium right angle=105, align=center},
		]
		\node [rblock]                      (start)     {Initialize};
		\node [block, below=of start]       (Primal)   {PP$^k$};
		\node [block, right=of Primal]     (Bendersprimal)  {BPP$^{(l)}$};
		\node [block, below=of Bendersprimal]    (Relaxed)      {RMP$^{(l)}$};
		\node [block, below=of Primal]     (Lagrange)  {LS$^k$};
		\node [block, left=of Primal]    (Bendersprimal2)   {BPP$^k$};
		\node [block, below=of Bendersprimal2]     (Restricted) {RPMP$^{k}$};
		\path[draw,->] (start)      edge (Primal)
		(Primal)    edge (Bendersprimal2)
		(Lagrange)    edge (Primal)    
		(Bendersprimal2)    edge (Restricted)
		(Restricted)  edge (Lagrange)
		;
		\path[draw,->,dashed]
		    (Bendersprimal)   edge (Primal)
			(Relaxed)       edge  (Bendersprimal)
			(Lagrange) 	edge  (Relaxed) 
			;  
		  \node[anchor=north west,text width=12cm] (note2) at (-6,-5.0)
		  {
   		  RPMP$^{k}$: Restricted Primal Master Problem \\
          LS$^k$: Lagrangian subproblem, decomposed into \eqref{eq:LS0} and \eqref{eq:LSw} ($\omega=1,\cdots,s$).
		  RMP$^{(l)}$: Relaxed Master Problem, with extra cuts from LS$^k$ and BPP$^k$.  \\
		  BPP$^{(l)}$: Benders Primal Problem, decomposed into \eqref{eq:BPP} ($\omega=1,\cdots,s$).  \\
		  PP$^k$: Primal Problem, decomposed into \eqref{eq:PP} ($\omega=1,\cdots,s$). \\
          BPP$^k$: Benders Primal Problem, solved after PP$^k$ is solved. \\
		  
		  	}; 
		\end{tikzpicture}
				\caption{The basic joint decomposition framework}     
				 \label{fig:figure1}	
			\end{center}	
\end{figure}

\subsection{Feasibility issues}

According to Assumption \ref{ass:boundedness}, a subproblem in JD either has a solution or is infeasible. Here we explain how JD handles infeasibility of a subproblem. 

First, if a lower bounding problem (LS$^k$) or \eqref{eq:BRMP} is infeasible, then the original problem \eqref{eq:P} is infeasible and JD can terminate. 

Second, if (BPP$^k$) or (BPP$^{(l)}$) is infeasible, then JD will solve the corresponding Benders feasibility problem (BFP$^k$) or (BFP$^{(l)}$) to yield a feasibility cut. If (BFP$^k$) or (BFP$^{(l)}$) is infeasible, then \eqref{eq:P} is infeasible and JD can terminate.

Third, if \eqref{eq:PP} is infeasible, then JD will solve a feasibility problem that "softens" the second group of constraints:  and this problem can be separated into $s$ subproblems as follows:
\begin{equation}  \label{eq:FPw} \tag{FP$_{\omega}^k$}
\begin{split}
&\min_{\substack{ x_{\omega}, y_{\omega},{z}_\omega}} \; \; \; ||{z}_\omega||\\ 
& \textrm{s.t.}  \quad  \; \; x_{0}^k  = H_{\omega}x_{\omega}, \quad \\   
& \quad \quad \; \;\; A_{\omega}x_{\omega} + B_{\omega}y_{\omega} \le {z}_\omega, \quad \\   
& \quad \quad \;\;\;  x_{\omega} \in X_{\omega}, \quad {y_{\omega}} \in {Y_{\omega}}, \quad {z}_\omega \ge 0. 
\end{split}
\end{equation}
If \eqref{eq:FPw} is infeasible for one scenario $\omega$, then \eqref{eq:P} is infeasible and JD can terminate. If \eqref{eq:FPw} is feasible for all scenarios, then JD can construct and solve a feasible Benders feasibility problem (BFP$^k$) to yield a Benders feasibility cut for \eqref{eq:BRMP}. 

Finally, problem \eqref{eq:RPMP} can actually be infeasible if none of the $(y^{[i]}_1, \cdots, y^{[i]}_s)$ in the problem is feasible for the original problem \eqref{eq:P}. To prevent this infeasibility, we can generate a point $(\hat{y}_1, \cdots, \hat{y}_s)$ that is feasible for \eqref{eq:P}, by solving the following initial feasibility problem:
\begin{equation} \label{eq:IFP} \tag{IFP}
\begin{split} 
& \min_{\substack{x_0, x_{1},\cdots,x_{s} \\ y_{1},\cdots,y_{s} \\ z_{1}, \cdots, z_{\omega}}} \;\sum_{\omega=1}^s ||z_{\omega}||\\ 
& \textrm{s.t.}  \quad  \; \; x_{0}  = H_{\omega}x_{\omega}, \quad \forall \omega \in \{1,...,s\}, \\   
& \quad \quad \; \;\; A_{\omega}x_{\omega} + B_{\omega}y_{\omega} \le z_\omega, \quad \forall \omega \in \{1,...,s\}, \\   
& \quad \quad \;\;\; x_{0} \in {X_0}, \\
& \quad \quad \;\;\; x_{\omega} \in X_{\omega}, \; {y_{\omega}} \in {Y_{\omega}}, \; z_\omega \ge 0, \quad \forall \omega \in \{1,...,s\}. \\ 
\end{split}
\end{equation}
Problem \eqref{eq:IFP} is not naturally decomposable over the scenarios, but it can be solved by JD. When solving \eqref{eq:IFP} using JD, the restricted primal master problem \eqref{eq:RPMP} must have a solution (according to Assumption \ref{ass:boundedness}).

\subsection{The tightened subproblems} 

The relaxed master problem described in Section 2 can be tightened with the solutions of previously solved subproblems in JD. The tightened problem, called joint decomposition relaxed master problem, can be written as:
\begin{equation} \label{eq:JRMP}  \tag{JRMP$^{(l)}$}
\begin{split}
&\min_{\substack{x_0,\eta_0,\eta_1,...,\eta_s \\ y_{1},...,y_{s}}} \quad \eta_0 \\ 
& \textrm{s.t.} \;\;  \eta_0 \ge \sum_{\omega=1}^{s}\eta_\omega, \\ 
& \quad \quad  \eta_\omega  \ge  obj_{\textrm{BPP}_\omega^{(j)}} + ( \lambda_\omega^{(j)})^{\textrm{T}} B_{\omega}(y_{\omega}-y_{\omega}^{(j)})+  ( \mu_\omega^{(j)})^{\textrm{T}}\left( x_0 - x_0^{(j)} \right), \\
& \qquad \qquad \qquad \qquad \qquad \qquad \qquad \qquad  \forall \omega \in \{1,...,s\}, \quad \forall  j \in {T^{(l)}}, \\ 
& \quad \quad  0  \ge obj_{\textrm{BFP}_\omega^{(j)}} + (\lambda_\omega^{(j)})^{\textrm{T}} B_{\omega}(y_{\omega}-y_{\omega}^{(j)})+ ( \mu_\omega^{(j)})^{\textrm{T}}\left( x_0 - x_0^{(j)} \right), \\
& \qquad \qquad \qquad \qquad \qquad \qquad \qquad \qquad \forall \omega \in \{1,...,s\}, \quad  
 \forall  j \in {S^{(l)}}, \\ 
& \quad \quad \eta_\omega  \ge  obj_{\textrm{BPP}_\omega^j} + ( \lambda_\omega^j)^{\textrm{T}} B_{\omega}(y_{\omega}-y_{\omega}^j)+  ({ \mu_{\omega}^{j}})^{\textrm{T}}\left( x_0 - x_0^{j} \right), \\
& \qquad \qquad \qquad \qquad \qquad \qquad \qquad \qquad
\forall \omega \in \{1,...,s\}, \quad \forall  j \in {T^k}, \\ 
& \quad \quad  0  \ge obj_{\textrm{BFP}_\omega^j} + (\lambda_\omega^j)^{\textrm{T}} B_{\omega}(y_{\omega}-y_{\omega}^{j})+ ( \mu_\omega^j)^{\textrm{T}}\left( x_0 - x_0^{j} \right), \\
& \qquad \qquad \qquad \qquad \qquad \qquad \qquad \qquad \forall \omega \in \{1,...,s\}, \quad  \forall  j \in S^k, \\ 
& \quad \quad \eta_0 \le UBD, \\ 
& \quad \quad \eta_0 \ge LBD, \\    
& \quad \quad \eta_\omega  \ge  obj_{\textrm{LS}_\omega^i} +  ({ \pi_{\omega}^{i}})^{\textrm{T}}x_0, \quad \forall \omega \in \{1,...,s\}, \quad \forall  i \in {R^k}, \\ 
& \quad \quad x_0\in X_0,\quad y_{\omega} \in Y_{\omega}, \quad \forall \omega \in \{1,...,s\},  \\ 
\end{split}
\end{equation}
where the index set $R^k=\{1, \cdots, k \}$, $UBD$ is the current best upper bound for \eqref{eq:P}, and $LBD$ is the current best lower bound for \eqref{eq:P}. 

\begin{proposition} \label{prop:JRMP}
Problem \eqref{eq:JRMP} is a valid lower bounding problem for Problem \eqref{eq:P}.
\end{proposition}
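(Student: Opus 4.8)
The plan is to show that every constraint appearing in \eqref{eq:JRMP} is a valid inequality for (the epigraph reformulation of) Problem \eqref{eq:P}, so that the feasible region of \eqref{eq:JRMP} contains the projection of the feasible region of \eqref{eq:P} onto the variables $(x_0, y_1, \ldots, y_s, \eta_0, \eta_1, \ldots, \eta_s)$, where $\eta_\omega \ge c_\omega^T x_\omega$ for a feasible $x_\omega$ and $\eta_0 \ge \sum_\omega \eta_\omega$. Once this containment is established, the minimum of $\eta_0$ over \eqref{eq:JRMP} is no larger than the optimal value of \eqref{eq:P}, which is precisely the claim.

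First I would recall the standard GBD argument: for any fixed complicating values $(x_0, y_1, \ldots, y_s)$ feasible for \eqref{eq:P}, and for each scenario $\omega$, the optimal value $obj_{\textrm{BPP}_\omega^{(j)}}$ together with the multipliers $(\mu_\omega^{(j)}, \lambda_\omega^{(j)})$ from iteration $j$ gives, by weak duality / the subgradient inequality for the value function of the convex subproblem \eqref{eq:BPP}, the bound $c_\omega^T x_\omega \ge obj_{\textrm{BPP}_\omega^{(j)}} + (\lambda_\omega^{(j)})^T B_\omega (y_\omega - y_\omega^{(j)}) + (\mu_\omega^{(j)})^T (x_0 - x_0^{(j)})$; this justifies the optimality cuts indexed by $j \in T^{(l)}$ and by $j \in T^k$. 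The same reasoning applied to the feasibility subproblem \eqref{eq:BFP} shows that any $(x_0, y_1, \ldots, y_s)$ that is feasible for \eqref{eq:P} must satisfy $0 \ge obj_{\textrm{BFP}_\omega^{(j)}} + (\lambda_\omega^{(j)})^T B_\omega (y_\omega - y_\omega^{(j)}) + (\mu_\omega^{(j)})^T (x_0 - x_0^{(j)})$, since for a truly feasible point the feasibility subproblem has optimal value zero while the cut is a supporting inequality of its value function; this handles the feasibility cuts indexed by $S^{(l)}$ and $S^k$. Here Assumption \ref{ass:slater} (Slater) and Assumption \ref{ass:boundedness} are exactly what guarantee strong duality of the convex subproblems and hence the existence and validity of these multiplier-based cuts.

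Next I would treat the Lagrangian cuts $\eta_\omega \ge obj_{\textrm{LS}_\omega^i} + (\pi_\omega^i)^T x_0$ for $i \in R^k$. The point is that for fixed $x_0$, the scenario-$\omega$ piece of the Lagrangian subproblem, namely minimizing $c_\omega^T x_\omega - (\pi_\omega^i)^T H_\omega x_\omega$ over $(x_\omega, y_\omega) \in X_\omega \times Y_\omega$ subject to $A_\omega x_\omega + B_\omega y_\omega \le 0$, is a relaxation of \eqref{eq:PP} in which the equality $x_0 = H_\omega x_\omega$ has been dualized with multiplier $\pi_\omega^i$. Hence for any $x_\omega$ feasible in \eqref{eq:P} with the given $x_0$ we have $c_\omega^T x_\omega = c_\omega^T x_\omega - (\pi_\omega^i)^T(H_\omega x_\omega - x_0) \ge obj_{\textrm{LS}_\omega^i} + (\pi_\omega^i)^T x_0$ (using $obj_{\textrm{LS}_\omega^i}$ as defined via \eqref{eq:LSw} with the convention that the $obj_{\textrm{LS}_0^i}$ term carrying $\sum_\omega (\pi_\omega^i)^T x_0$ is reintroduced scenario-wise as $(\pi_\omega^i)^T x_0$), so the cut is valid with $\eta_\omega \ge c_\omega^T x_\omega$. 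Finally, the bound constraints $\eta_0 \le UBD$ and $\eta_0 \ge LBD$ are valid because $UBD$ and $LBD$ are, by construction, an upper and a lower bound on the optimal value of \eqref{eq:P}, so the optimal solution's objective lies in $[LBD, UBD]$; and $\eta_0 \ge \sum_\omega \eta_\omega$ with $x_0 \in X_0$, $y_\omega \in Y_\omega$ are obviously satisfied at any feasible point of \eqref{eq:P}. Collecting all these observations, every feasible point of \eqref{eq:P} (lifted to the $\eta$ variables) is feasible for \eqref{eq:JRMP}, giving $obj_{\textrm{JRMP}^{(l)}} \le obj_{\textrm{P}}$.

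The main obstacle, and the only place that needs care, is the precise justification of the Lagrangian cut: one must make sure that the scenario-decomposed constant $obj_{\textrm{LS}_\omega^i}$ is defined consistently with how the linking term $(\pi_\omega^i)^T x_0$ is distributed across scenarios, and that the cut remains valid even when the original problem is infeasible for the particular $x_0$ at hand (in which case the cut simply need not be binding, but one should confirm it does not wrongly exclude feasible points). A secondary subtlety is the feasibility-cut validity: one must invoke the fact that \eqref{eq:BFP} is always feasible (Assumption \ref{ass:boundedness}) and that its value function is convex with the stated subgradient, so that a genuinely feasible $(x_0, y_\omega)$ forces the right-hand side of the feasibility cut to be $\le 0$. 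Everything else is a routine assembly of standard Benders and Lagrangian duality inequalities.
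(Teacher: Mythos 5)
Your proposal is correct and follows essentially the same route as the paper's proof: weak duality applied to the value function of the scenario subproblem justifies the Benders optimality (and feasibility) cuts, the Lagrangian cuts are validated by observing that the dualized scenario subproblem is a relaxation whose objective coincides with $c_\omega^T x_\omega$ at any point satisfying the NACs, and the $UBD$/$LBD$ constraints are valid by construction. The only difference is cosmetic: the paper delegates the validity of the standard Benders cuts and the bound constraints to the already-established validity of \eqref{eq:BRMP} and argues only the new ingredients, whereas you verify every constraint explicitly.
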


\begin{proof}
Since it is already known that Problem \eqref{eq:BRMP} is a valid lower bounding problem and $UBD$ and $LBD$ are valid upper and lower bounds, we only need to prove that the cuts from Lagrangian subproblems together with the Benders optimality cuts do not exclude an optimal solution. Let $obj_\text{P}$ be the optimal objective value of \eqref{eq:P}, then
\[obj_{\text{P}} = \sum_{\omega=1}^{s} obj_{\text{PP}_{\omega}}(x_0),  \]
where
\[obj_{\text{PP}_{\omega}}(x_0)=\min\{c^T_{\omega}x_{\omega}: x_0=H_{\omega}x_{\omega}, \; A_{\omega} x_{\omega} + B_{\omega} y_{\omega} \leq 0, \; x_{\omega} \in X_{\omega}, \; y_{\omega} \in Y_{\omega}  \}.  \]

On the one hand, $\forall \pi^i_{\omega}, i \in R^k$, 
\begin{equation} \label{eq:LDoptcut}
\begin{split}
     & obj_{\text{PP}_{\omega}}(x_0) \\
\geq & \min\{c^T_{\omega}x_{\omega} + (\pi^i_{\omega})^T (x_0-H_{\omega}x_{\omega}): A_{\omega} x_{\omega} + B_{\omega} y_{\omega} \leq z_{\omega}, \; x_{\omega} \in X_{\omega}, \; y_{\omega} \in Y_{\omega}  \} \\
= & obj_{\textrm{LS}_\omega^i} +  ({ \pi_{\omega}^{i}})^{\textrm{T}}x_0. 
\end{split}
\end{equation}

On the other hand, 
\[obj_{\text{PP}_{\omega}}(x_0) = \min_{y_{\omega} \in Y_{\omega}} v_{\omega} (x_0, y_{\omega}),\] 
where $v_{\omega} (x_0, y_{\omega})=\min\{c^T_{\omega}x_{\omega}:x_0=H_{\omega}x_{\omega}, \; A_{\omega} x_{\omega} + B_{\omega} y_{\omega} \leq 0 \}$. From weak duality, $\forall j \in T^{(l)}$,  
\begin{equation*} 
\begin{split}
     & v_{\omega}(x_0,y_{\omega}) \\
\geq & \min\{c^T_{\omega}x_{\omega} + (\lambda_\omega^{(j)})^{\textrm{T}} (A_{\omega}x_{\omega} + B_{\omega}y_{\omega})+  ( \mu_\omega^{(j)})^{\textrm{T}} (x_0 - H_{\omega}x_{\omega} ): x_{\omega} \in X_{\omega} \} \\
= & obj_{\textrm{BPP}_\omega^{(j)}} + ( \lambda_\omega^{(j)})^{\textrm{T}} B_{\omega}(y_{\omega}-y_{\omega}^{(j)})+  ( \mu_\omega^{(j)})^{\textrm{T}}\left( x_0 - x_0^{(j)} \right).
\end{split}
\end{equation*}
Thefore, $\forall y_{\omega} \in Y_{\omega}$,
\begin{equation} \label{eq:BDoptcut}
obj_{\text{PP}_{\omega}}(x_0) \geq obj_{\textrm{BPP}_\omega^{(j)}} + ( \lambda_\omega^{(j)})^{\textrm{T}} B_{\omega}(y_{\omega}-y_{\omega}^{(j)})+  ( \mu_\omega^{(j)})^{\textrm{T}}\left( x_0 - x_0^{(j)} \right).
\end{equation}

Equations \eqref{eq:LDoptcut}-\eqref{eq:BDoptcut} indicate that the cuts from Lagrangian subproblems together with the Benders optimality cuts do not exclude an optimal solution of \eqref{eq:P}. 

\end{proof}

For convenience, we call the cuts from the Lagrangian subproblems, Lagrangian cuts. The Benders cuts and the Lagrangian cuts in \eqref{eq:JRMP} imply that, $\forall i \in R^k$, 
\[UBD \ge \eta_0 \ge \sum_{\omega=1}^s \eta_\omega \ge \sum_{\omega=1}^sobj_{LS_\omega^i} + \sum_{\omega=1}^s { { ({ \pi_{\omega}^{i}})^{\textrm{T}}} } x_0.
 \]
Now we get new constraints
\begin{equation*} \label{eq:star} \tag{*}
UBD \ge  \sum_{\omega=1}^s obj_{LS_\omega^i} + \sum_{\omega=1}^s  (\pi_\omega^i)^{\textrm{T}} x_0, \quad \forall i \in R^k, 
\end{equation*}
which only include variable $x_0$ and do not link different scenarios. This constraint can be used to enhance any subproblems that involves $x_0$ as variables. Specifically, problems \eqref{eq:LS0}, \eqref{eq:LSw}, \eqref{eq:RPMP} can be enhanced as: 
\begin{equation} \label{eq:LSw} \tag{LS$_\omega^k$}  
\begin{split} 
& \min_{\substack{ x_{c,\omega}, y_{nc,\omega}}} \; c_\omega^Tx_{\omega}-(\pi_\omega^k)^T H_{\omega}x_{\omega} \\ 
& \textrm{s.t.}  \quad \;\; A_{\omega}x_{\omega} + B_{\omega}y_{\omega} \le 0, \quad \\  
& \quad \quad \;\;\; UBD \ge  \sum_{\omega=1}^s obj_{LS_\omega^i} + \sum_{\omega=1}^s  (\pi_\omega^i)^{\textrm{T}} x_0, \quad \forall i \in R^k, \\
& \quad \quad \;\;\; x_{\omega} \in X_{\omega}, {y_{\omega}} \in {Y_{\omega}}. \quad  \\ 
\end{split}
\end{equation}

\begin{equation} \label{eq:LS0}   \tag{LS$_0^k$}
\begin{split} 
& \min_{x_{0}} \; \sum_{\omega=1}^s (\pi_\omega^k)^T x_0 \\ 
& \textrm{s.t.} \quad \; \; \; UBD \ge  \sum_{\omega=1}^s obj_{LS_\omega^i} + \sum_{\omega=1}^s  (\pi_\omega^i)^{\textrm{T}} x_0, \quad \forall i \in R^k, \\ 
& \quad \quad  \quad  \;\;\; x_0 \in X_0.
\end{split}
\end{equation}

\begin{equation}  \label{eq:RPMP} \tag{RPMP$^k$}
\begin{split}
& \min_{\substack{x_0,\theta_1^{[i]},...,\theta_s^{[i]} \\ x_{1},...,x_{s} }} \quad  \sum_{\omega=1}^sc_{\omega}^Tx_{\omega} \\ 
& \textrm{s.t.}  \quad \quad  \; x_{0}  = H_{\omega}x_{\omega}, \quad \forall \omega \in \{1,...,s\}, \\   
& \quad \quad  \quad \; \;\; A_{\omega}x_{\omega} + B_{\omega}\sum_{i \in I^k}\theta_\omega^{[i]}y_{\omega}^{[i]} \le 0, \quad \forall \omega \in \{1,...,s\}, \\   
& \quad \quad \quad \;\;\; \sum_{i \in I^k}\theta_\omega^{[i]} = 1, \quad  \theta_\omega^{[i]} 
\ge 0 , \quad \forall i \in I^k, \quad \forall \omega \in \{1,...,s\},\\
& \quad \quad \quad \; \; \; UBD \ge  \sum_{\omega=1}^s obj_{LS_\omega^i} + \sum_{\omega=1}^s  (\pi_\omega^i)^{\textrm{T}} x_0, \quad \forall i \in R^k, \\
& \quad \quad \quad \;\;\; x_0 \in X_0, \quad x_{\omega} \in X_{\omega}, \quad  \forall \omega \in \{1,...,s\}, 
\end{split}
\end{equation}
Note that the index set $I^k$ includes indices for all constant points $y^{[i]}_\omega$ in Problem  \eqref{eq:RPMP}, and the constant points $y^{[i]}_\omega$ come from all previously solved PP, FP, LS and JRMP.

\subsection{The basic joint decomposition algorithm}

Table \ref{tab:JD1} shows the basic JD algorithm. As described in Section 3.1, a JD iteration always include a LD iteration and sometimes a GBD iteration as well. Whether the GBD iteration is performed at JD iteration $k$ depends on whether LD iteration $k$ improves over LD iteration $k-1$ substantially, i.e., whether $obj_{{LS}^{k}} \geq obj_{{LS}^{k-1}} + \epsilon$. This strategy implies the following result.

\begin{table} [tbp] 
	\caption{The basic joint decomposition algorithm}
	\label{tab:JD1}              
	\begin{tabular} {c}
		\hline
		\\
		\begin{minipage} {\textwidth}
			\centering	     	 		 
			\begin{algorithmic}
				
				\STATE \underline{\textbf{Initialization}} 
				\vspace{0.1cm}
				\STATE
				\begin{enumerate}
					\setlength{\itemsep}{0pt} 
					
					\item[(I.a)] Select $x_0^1, y_1^{[1]}, \cdots, y_s^{[1]}$ that are feasible for Problem \eqref{eq:P}. 
					
					\item[(I.b)] Give termination tolerance $\epsilon > 0$. Let index sets $T^{1}= S^{1}=R^1=\emptyset$, $I^1=\{1\}$, iteration counter $k=1$, $i=1$, $l=1$, bounds $UBD=+\infty$, $LBD=-\infty$.   
				\end{enumerate}
				
				\vspace{0.1cm}
				\STATE \underline{\textbf{LD Iteration}}
				\STATE
				\begin{enumerate}
					\setlength{\itemsep}{0pt}
					\item[(1.a)] Solve Problem \eqref{eq:PP}. If Problem \eqref{eq:PP} is infeasible, solve Problem \eqref{eq:FPw}. Let the solution obtained be $(x_{\omega}^k,y_{\omega}^k)$, and update $i=i+1$, $I^k$=$I^k \cup \{i \}$, $(y^{[i]}_1, \cdots, y^{[i]}_s)=(y^k_1, \cdots, y^k_s)$. 
					
					\item[(1.b)] Solve Problem (BPP$^k_{\omega}$) by fixing $(x_0,y_1,...,y_s)=(x_0^k,y_{1}^k,...,y_{s}^k)$. If (BPP$^k_{\omega}$) is feasible for all $\omega$, generate Benders optimality cuts with the obtained dual solution $\mu_\omega^k$ and $\lambda_\omega^k$, and update  $T^{k+1}=T^k \cup \{k\}$. If $\sum_{\omega=1}^{s} obj_{PP^k_{\omega}}<UBD$, update $UBD=\sum_{\omega=1}^{s} obj_{PP^k_{\omega}}$, and incumbent solution $(x_0^*,x_{1}^*,\cdots, x_{s}^*, y_1^*, \cdots, y_s^*)=(x_0^k,x_{1}^k, \cdots,x_{s}^k, y_1^k, \cdots, y_{s}^k)$. If Problem (BPP$^k_{\omega}$) is infeasible for at least one $\omega$, solve Problem (BFP$^k_{\omega}$). Generate Benders feasibility cuts with the obtained dual solution $\mu_\omega^k$ and $\lambda_\omega^k$, and update  $S^{k+1}=S^k \cup \{k\}$.
					
					\item[(1.c)] Solve Problem \eqref{eq:RPMP}. Let $x_0^k$, $\{\theta_\omega^{[i,k]}\}_{i \in I^k, \omega \in \{1,...,s\}}$ be the optimal solution obtained, and $\pi_1^k,...,\pi_s^k$ be Lagrange multipliers for the NACs. 
					
					\item[(1.d)] Solve Problems \eqref{eq:LSw} and \eqref{eq:LS0}, and let the obtained solution be $(x_{\omega}^k$, $y_{\omega}^k)$, $x_0^k$. If $obj_{LS^k}=\sum_{\omega=1}^{s}obj_{{LS1}_\omega^k}+obj_{{LS_0}^k}>LBD$, update $LBD=obj_{LS^k}$. Generate a Lagrangian cut and update $R^{k+1}=R^{k} \cup \{k\}$. Update $i=i+1$, $I^{k+1} =I^k \cup \{i\} $, $(y^{[i]}_1, \cdots, y^{[i]}_s)=(y^k_1, \cdots, y^k_s)$.
					
					\item[(1.e)] If $UBD \le LBD+\epsilon$, terminate and return the incumbent solution as an $\epsilon$-optimal solution. If $obj_{{LS}^{k}} \geq obj_{{LS}^{k-1}} + \epsilon$, $k=k+1$, go to step (1.a); otherwise $k=k+1$ and go to step (2.a);

				\end{enumerate}

				\STATE \underline{\textbf{GBD Iteration}} 
				\STATE
				\begin{enumerate}
					\setlength{\itemsep}{0pt} 
					
					\item[(2.a)] Solve Problem \eqref{eq:JRMP}, and let the obtained solution be $(x_0^{(l)},y_{1}^{(l)},...,y_{s}^{(1)})$. Update $i=i+1$, $I^{k+1} =I^k \cup \{i\}$, $(y^{[i]}_1, \cdots, y^{[i]}_s)=(y^{(l)}_1, \cdots, y^{(l)}_s)$. If $obj_{RMP^{(l)}}>LBD$, update $LBD=obj_{JRMP^{(l)}}$. 
					
					\item[(2.b)] Solve Problem \eqref{eq:BPP} by fixing $(x_0,y_1,\cdots,y_s)=(x_0^{(l)},y_{1}^{(l)},\cdots,y_{s}^{(l)})$. If \eqref{eq:BPP} is feasible for all $\omega$, generate Benders optimality cuts with the dual solution $\mu_\omega^k$ and $\lambda_\omega^k$, and update $T^{(l+1)}=T^{(l)} \cup \{l\}$. If $\sum_{\omega=1}^{s} obj_{BPP^{(l)}_{\omega}}<UBD$, update $UBD=obj_{BPP^{(l)}}$ and the incumbent solution $(x_0^*,x_{1}^*,\cdots, x_s^*, y_{1}^*, \cdots, y_s^*)=(x_0^{(l)},x_{1}^{(l)},\cdots, x_s^{(l)}, y_{1}^{(l)}), \cdots, y_{s}^{(l)})$.
					If Problem \eqref{eq:BPP} is infeasible for at least one $\omega$, solve Problem \eqref{eq:BFP}. Generate Benders feasibility cuts with the obtained dual solution $\mu_\omega^l$ and $\lambda_\omega^l$, and update $S^{(l+1)}=S^{(l)} \cup \{l\}$.
					
					\item[(2.c)]  If $UBD \le LBD+\epsilon$, terminate and return the incumbent solution as an $\epsilon$-optimal solution; otherwise $l=l+1$, go to step (1.a).

				\end{enumerate}

				
			\end{algorithmic}
			
		\end{minipage} 
		\\
		\\
		\hline
	\end{tabular}
	
\end{table}

\begin{proposition} \label{prop:LDfinite}
The JD algorithm shown in Table \ref{tab:JD1} cannot perform an infinite number of LD iterations between two GBD iterations.   
\end{proposition}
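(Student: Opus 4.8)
The plan is to argue by contradiction, using only the acceptance test in step (1.e) of Table~\ref{tab:JD1} together with weak duality. Suppose JD performed an infinite run of consecutive LD iterations, with no GBD iteration (and no termination) occurring in between; index them by counters $k_0, k_0+1, k_0+2, \cdots$. According to step (1.e), the \emph{only} way LD iteration $k+1$ is entered immediately after LD iteration $k$ --- rather than a GBD iteration being triggered or the algorithm terminating --- is that $obj_{\text{LS}^{k}} \ge obj_{\text{LS}^{k-1}} + \epsilon$. Hence this inequality holds for every $k \ge k_0+1$ in the run, and telescoping yields $obj_{\text{LS}^{k_0+m}} \ge obj_{\text{LS}^{k_0}} + m\epsilon$ for all $m \ge 0$, so that $obj_{\text{LS}^{k}} \to +\infty$ along the run.

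The second, and main, step is to contradict this divergence with a \emph{uniform} finite upper bound on $obj_{\text{LS}^{k}}$. First, if \eqref{eq:P} is infeasible this is detected during initialization --- step (I.a) requires a point feasible for \eqref{eq:P}, obtained (or shown not to exist) by solving \eqref{eq:IFP} --- or as soon as some lower bounding subproblem such as (LS$^k$) turns out infeasible, in which case JD terminates; so inside an infinite run of LD iterations we may assume \eqref{eq:P} is feasible and every (LS$^k$) in the run is feasible, whence each $obj_{\text{LS}^k}$ is a finite real number (minimization of a linear objective over a nonempty compact set). By Assumption~\ref{ass:boundedness} the feasible set of \eqref{eq:P} is nonempty and compact and its objective is linear, so $obj_{\text{P}}$ is finite; let $(x_0^*, x_1^*, \cdots, x_s^*, y_1^*, \cdots, y_s^*)$ be an optimal solution. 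I would then check this point is feasible for the tightened subproblems \eqref{eq:LSw} and \eqref{eq:LS0}: the scenario constraints and $x_0^* \in X_0$, $x_\omega^* \in X_\omega$, $y_\omega^* \in Y_\omega$ hold because the point is feasible for \eqref{eq:P}, and the extra cut tightening these subproblems is valid at any optimal point of \eqref{eq:P} since $UBD \ge obj_{\text{P}} = \sum_\omega obj_{\text{PP}_\omega}(x_0^*) \ge \sum_\omega\big(obj_{\text{LS}_\omega^i} + (\pi_\omega^i)^T x_0^*\big)$ by \eqref{eq:LDoptcut}. Evaluating the Lagrangian objective of (LS$^k$) at this point gives $\sum_\omega c_\omega^T x_\omega^* + \sum_\omega (\pi_\omega^k)^T (x_0^* - H_\omega x_\omega^*) = obj_{\text{P}}$, because the NACs are satisfied there; therefore $obj_{\text{LS}^k} \le obj_{\text{P}}$ for every $k$. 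Combined with $obj_{\text{LS}^k} \to +\infty$ this is the contradiction, and the proposition follows.

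The telescoping in the first step is routine; the care is in the uniform bound. Concretely, one must verify that the $\epsilon$-improvement is exactly what the algorithm demands between \emph{consecutive} LD iterations, and --- more importantly --- that $obj_{\text{LS}^k} \le obj_{\text{P}}$ survives for the \emph{tightened} Lagrangian subproblems, i.e. that the cuts added in \eqref{eq:LSw} and \eqref{eq:LS0} (and the $UBD$/$LBD$ bookkeeping) never exclude an optimal solution of \eqref{eq:P}. The remaining bookkeeping --- that infeasibility of \eqref{eq:P}, of a primal problem (PP$^k$) or \eqref{eq:FPw}, or of a Lagrangian subproblem makes JD stop, so there is nothing to prove in those cases --- should be noted but is straightforward.
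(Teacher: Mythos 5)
Your proof is correct and uses essentially the same mechanism as the paper's: each consecutive LD iteration must raise $obj_{\text{LS}}$ by at least $\epsilon$, and this monotone increase collides with a finite ceiling. The only difference is cosmetic --- you cap $obj_{\text{LS}^k}$ by $obj_{\text{P}}$ via weak duality (carefully checking that the tightening cuts keep an optimal point of \eqref{eq:P} feasible), while the paper caps it by the finite $UBD$ supplied by the initial feasible point and invokes the termination test once $obj_{\text{LS}}$ exceeds $UBD-\epsilon$.
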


\begin{proof}
The initial point $(x_0^1, y_1^{[1]}, \cdots, y_s^{[1]})$ that are feasible for Problem \eqref{eq:P} can lead to a finite upper bound $UBD$. According to Assumption \ref{ass:boundedness}, all Lagrangian subproblems are bounded, so between two GBD iterations, the first LD iteration leads to a finite $obj_{LS}$, and the subsequent LD iterations increase $obj_{LS}$ by at least $\epsilon>0$ (because otherwise a GBD iteration has to be performed). Therefore, in a finite number LD iterations either $obj_{LS}$ exceeds $UBD-\epsilon$ and the algorithm terminates with an $\epsilon$-optimal solution, or a GBD iteration is performed. This completes the proof. 
\end{proof}

\begin{remark} \label{rmk:initialfeasibility} 
If an initial feasible point for Problem \eqref{eq:P} is not known, the initial feasibility problem \eqref{eq:IFP} can be solved to get a feasible point for \eqref{eq:P} or verify that Problem \eqref{eq:P} is infeasible (when the optimal objective value of Problem \eqref{eq:IFP} is positive). Note that it is easy to find a feasible point of Problem \eqref{eq:IFP}.  
\end{remark}

In the JD algorithm, we use $k$ to index both a JD iteration and a LD iteration, as every JD iteration includes one LD iteration. We use $l$ (together with '()') to index a GBD iteration, and usually $l < k$ because not every JD iteration includes one GBD iteration. We use $i$ (together with '[]') to index the columns generated for constructing Problem \eqref{eq:RPMP}. Next, we establish the finite convergence property of the JD algorithm.

\begin{proposition} \label{prop:BDfinite}
	If set $X_{\omega}$ is polyhedral $\forall \omega \in \{1, \cdots, s\}$, the JD algorithm shown in Table \ref{tab:JD1} cannot perform an infinite number of GBD iterations.
\end{proposition}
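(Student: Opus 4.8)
The plan is to follow the classical finite-convergence argument for generalized Benders decomposition \cite{geoffrion1972gbd}, the crucial new ingredient being that polyhedrality of each $X_\omega$ (together with its compactness from Assumption~\ref{ass:boundedness}) makes the Benders primal subproblems \eqref{eq:BPP} and the Benders feasibility subproblems \eqref{eq:BFP} linear programs. I would argue by contradiction, assuming the algorithm performs infinitely many GBD iterations.

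The first key observation is that, for each fixed $\omega$, the data $c_\omega$, $H_\omega$, $A_\omega$ and the polyhedral description of $X_\omega$ appearing in \eqref{eq:BPP} and \eqref{eq:BFP} are the same in every GBD iteration; only the right-hand sides, which are fixed by $x_0^{(l)}$ and $y_\omega^{(l)}$, vary. Hence the dual feasible polyhedron of \eqref{eq:BPP} is iteration-independent, and so is that of \eqref{eq:BFP}, and each of these polyhedra has only finitely many extreme points and extreme rays. Using LP strong duality (valid since compactness of $X_\omega$ gives \eqref{eq:BPP} a finite optimum whenever it is feasible), the Benders optimality cut produced at GBD iteration $l$ can be rewritten as $\eta_\omega \ge (\mu_\omega^{(l)})^{\textrm{T}} x_0 + (\lambda_\omega^{(l)})^{\textrm{T}} B_\omega y_\omega + h_\omega(\mu_\omega^{(l)},\lambda_\omega^{(l)})$, where $h_\omega$ does not depend on the iteration; thus the cut is determined entirely by the dual solution $(\mu_\omega^{(l)},\lambda_\omega^{(l)})$ and not by the anchor point $(x_0^{(l)},y_\omega^{(l)})$, and likewise each feasibility cut is determined by the dual solution of \eqref{eq:BFP}. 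Taking the subproblem solver to return extreme-point (basic) dual solutions, as a simplex-based LP solver does, we conclude that only finitely many distinct Benders optimality cuts and finitely many distinct Benders feasibility cuts can ever enter \eqref{eq:JRMP}, for each $\omega$.

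Since the collection of distinct Benders cuts present in \eqref{eq:JRMP} grows monotonically and is contained in this finite pool, it stabilizes: there is a GBD iteration index $L$ such that, for every GBD iteration $l \ge L$ and every $\omega$, the Benders cut generated at iteration $l$ already appears in \eqref{eq:JRMP}. I would then obtain a contradiction at iteration $L$. If \eqref{eq:BPP} is feasible for all $\omega$ there, the generated optimality cuts, being already present, are satisfied by the master solution $(x_0^{(l)},y^{(l)})$; evaluating each at that point and invoking strong duality gives $\eta_\omega^{(l)} \ge obj_{\textrm{BPP}_\omega^{(l)}}$, hence $\eta_0^{(l)} \ge \sum_{\omega=1}^s \eta_\omega^{(l)} \ge \sum_{\omega=1}^s obj_{\textrm{BPP}_\omega^{(l)}} = obj_{\textrm{BPP}^{(l)}}$; combining with the updates $LBD \ge obj_{\textrm{JRMP}^{(l)}} = \eta_0^{(l)}$ in step (2.a) and $UBD \le obj_{\textrm{BPP}^{(l)}}$ in step (2.b) yields $UBD \le LBD$, so the algorithm terminates in step (2.c) --- contradicting the assumption of infinitely many GBD iterations. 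If instead \eqref{eq:BPP} is infeasible for some $\omega$, the generated feasibility cut is already present and hence satisfied by $(x_0^{(l)},y^{(l)})$; evaluated at that point it reads $0 \ge obj_{\textrm{BFP}_\omega^{(l)}}$, which is impossible because infeasibility of \eqref{eq:BPP} forces $obj_{\textrm{BFP}_\omega^{(l)}} > 0$. Either way we reach a contradiction. (The Lagrangian cuts and the bounds $\eta_0 \le UBD$, $\eta_0 \ge LBD$ in \eqref{eq:JRMP} only tighten the master and do not affect this reasoning; and if \eqref{eq:JRMP} or a Benders feasibility subproblem is ever infeasible the algorithm terminates by the rules of Section~3.2, again contradicting having infinitely many GBD iterations.)

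The step I expect to be the main obstacle is establishing that the pool of generatable Benders cuts is finite. Two points need care: that the anchor-point terms genuinely cancel via LP strong duality so that a cut is a function of its dual solution alone (not merely that it has finitely many possible slopes), and that the dual solutions may be assumed to be extreme points of the fixed dual polyhedron. Putting \eqref{eq:BFP} on the same footing also requires the penalty norm in \eqref{eq:BFP} to be polyhedral (e.g.\ $\ell_1$ or $\ell_\infty$) so that \eqref{eq:BFP} is genuinely a linear program; alternatively one can generate feasibility cuts directly from extreme rays of the dual of \eqref{eq:BPP}, which form a manifestly finite pool.
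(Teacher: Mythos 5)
Your proposal is correct and takes essentially the same route as the paper, which simply invokes the classical finite-termination argument for Benders decomposition: linear duality gives a finite pool of possible cuts (finitely many extreme points/rays of the iteration-independent dual polyhedra), and the same cut cannot be generated twice before the gap closes. You have merely written out in full the two facts the paper states and cites, including the appropriate caveats about extreme-point dual solutions and a polyhedral norm in \eqref{eq:BFP}.
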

\begin{proof}
In this case, the GBD part of the algorithm reduces to BD, and BD is known to have finite termination property \cite{benders1962pps} \cite{lasdon1970spp}. The finite termination property results from: \\
 (a) The Benders master problem \eqref{eq:BRMP} (and therefore \ref{eq:JRMP} as well) requires only a finite number of Benders cuts to equal Problem \eqref{eq:P}, due to linear duality theory; \\
 (b) A same Benders cut cannot be generated twice before the optimality gap is closed.  
\end{proof}

\begin{proposition} \label{prop:x0finite}
	If $X_0 \times Y_1 \times \cdots \times Y_s$ is a finite discrete set, the JD algorithm shown in Table \ref{tab:JD1} cannot perform an infinite number of GBD iterations.
\end{proposition}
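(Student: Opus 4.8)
The plan is to reuse the classical finite-termination argument for Generalized Benders Decomposition \cite{geoffrion1972gbd}: when the complicating variables range over a finite set, the relaxed master problem cannot return the same solution at two distinct GBD iterations without the optimality gap already being closed. I would argue by contradiction, assuming the algorithm executes infinitely many GBD iterations. Each GBD iteration $l$ solves \eqref{eq:JRMP}; if \eqref{eq:JRMP} is ever infeasible the algorithm stops by the feasibility rules of Section~3.2, so we may assume it always returns a point $(x_0^{(l)},y_1^{(l)},\dots,y_s^{(l)})$, which lies in the finite set $X_0\times Y_1\times\dots\times Y_s$ because of the constraints $x_0\in X_0$, $y_\omega\in Y_\omega$ in \eqref{eq:JRMP}. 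By the pigeonhole principle some point $(\hat x_0,\hat y_1,\dots,\hat y_s)$ is returned by \eqref{eq:JRMP} at two distinct GBD iterations $l<l'$, and I would show that the algorithm must terminate at iteration $l'$, contradicting the assumption.

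The heart of the argument is to evaluate, at the repeated point, the cut that GBD iteration $l$ appended to the master (its index $l$ enters $T^{(l+1)}\subseteq T^{(l')}$ or $S^{(l+1)}\subseteq S^{(l')}$). If every \eqref{eq:BPP} was feasible at iteration $l$, then at $x_0=\hat x_0=x_0^{(l)}$ and $y_\omega=\hat y_\omega=y_\omega^{(l)}$ the linear terms of the iteration-$l$ optimality cuts vanish and the cuts collapse to $\eta_\omega\ge obj_{\textrm{BPP}_\omega^{(l)}}$, so any feasible point of \eqref{eq:JRMP} with this choice of $(x_0,y)$ has $\eta_0\ge\sum_{\omega}obj_{\textrm{BPP}_\omega^{(l)}}$. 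On the other hand, step (2.b) of iteration $l$ guarantees $UBD\le\sum_{\omega}obj_{\textrm{BPP}_\omega^{(l)}}$ and $UBD$ is nonincreasing, so the constraint $\eta_0\le UBD$ present in \eqref{eq:JRMP} at iteration $l'$ forces $\sum_{\omega}obj_{\textrm{BPP}_\omega^{(l)}}\le UBD$; combining the two inequalities shows $obj_{\textrm{JRMP}^{(l')}}=UBD$, step (2.a) then sets $LBD\ge UBD$, and step (2.c) returns an $\epsilon$-optimal solution. If instead some \eqref{eq:BPP} was infeasible at iteration $l$, then iteration $l$ appended a feasibility cut that at $(\hat x_0,\hat y)$ reduces to $0\ge obj_{\textrm{BFP}_\omega^{(l)}}$; but $obj_{\textrm{BFP}_\omega^{(l)}}>0$ precisely because \eqref{eq:BPP} was infeasible (a zero value would force all slacks to vanish and hence produce a feasible point of \eqref{eq:BPP}), so $(\hat x_0,\hat y)$ violates a constraint of \eqref{eq:JRMP} at iteration $l'$ and cannot be its solution, again a contradiction. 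Either branch yields the contradiction, so only finitely many GBD iterations are possible.

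I expect the bound-bookkeeping in the feasible branch to be the delicate step: one must check that $UBD$ after iteration $l$ is at most $\sum_{\omega}obj_{\textrm{BPP}_\omega^{(l)}}$ and never increases afterward, and that the cuts accumulated during the intervening LD iterations (the extra Benders cuts from step (1.b) and the Lagrangian cuts, indexed in $T^k$, $S^k$, $R^k$) together with the bounds $\eta_0\le UBD$ and $\eta_0\ge LBD$ only shrink the feasible region of \eqref{eq:JRMP}; they can therefore never re-admit a point that the iteration-$l$ cut excluded, so they do not interfere with the argument. The remaining ingredients — that the iteration-$l$ cut reproduces the true subproblem value at its generating point, and that it is a globally valid lower cut — are immediate from the cut formulas and were already recorded in the proof of Proposition~\ref{prop:JRMP}. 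Combined with Proposition~\ref{prop:LDfinite}, this also yields finite termination of the whole JD algorithm in this case.
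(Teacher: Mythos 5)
Your proof is correct and follows the same route as the paper, which simply states that a point of the finite set $X_0\times Y_1\times\cdots\times Y_s$ cannot be generated twice before the gap closes and defers the details to Theorem 2.4 of Geoffrion. Your pigeonhole argument with the two cases (optimality cut collapsing to $\eta_\omega\ge obj_{\textrm{BPP}_\omega^{(l)}}$ versus feasibility cut forcing $0\ge obj_{\textrm{BFP}_\omega^{(l)}}>0$) is exactly the elaboration that citation stands in for.
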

\begin{proof}
This result comes from the fact that a point in $X_0 \times Y_1 \times \cdots \times Y_s$ cannot be generated twice before the optimality gap is closed. For more details readers can see Theorem 2.4 of \cite{geoffrion1972gbd}. 
\end{proof}

\begin{proposition} \label{prop:GBDfinite}
The JD algorithm shown in Table \ref{tab:JD1} cannot include an infinite number of GBD iterations at which the Benders primal problem BPP is feasible.  
\end{proposition}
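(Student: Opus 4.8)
The plan is to argue by contradiction, essentially porting the finite $\epsilon$-convergence argument for GBD (Theorem~2.5 of \cite{geoffrion1972gbd}) into the JD setting; the only structural point to check is that the extra Lagrangian cuts and the bound constraints $\eta_0 \le UBD$, $\eta_0 \ge LBD$ present in \eqref{eq:JRMP} only tighten the relaxed master problem relative to the plain Benders master \eqref{eq:BRMP}, and hence cannot destroy the argument. Suppose, to the contrary, that the algorithm performs GBD iterations $l_1 < l_2 < \cdots$ at each of which the Benders primal problem \eqref{eq:BPP} is feasible. Write $y^{(l)}=(y_1^{(l)},\dots,y_s^{(l)})$. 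At iteration $l_j$ the candidate $(x_0^{(l_j)}, y^{(l_j)})$ returned by \eqref{eq:JRMP} makes \eqref{eq:BPP} feasible for every $\omega$, so step (2.b) appends a Benders optimality cut for each $\omega$ and sets $UBD \le \sum_{\omega=1}^{s} obj_{\textrm{BPP}_\omega^{(l_j)}} =: U^{(l_j)}$; since $UBD$ is nonincreasing, $UBD \le U^{(l_j)}$ at every later iteration. (Note also that \eqref{eq:JRMP} stays feasible throughout, since any optimal solution of \eqref{eq:P} yields a feasible point for it by Proposition~\ref{prop:JRMP}.)

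The next step is to exploit compactness. By Assumption~\ref{ass:boundedness} the points $(x_0^{(l_j)}, y^{(l_j)})$ all lie in the compact set $X_0 \times Y_1 \times \cdots \times Y_s$, so the sequence is Cauchy along a subsequence: for any $\delta>0$ there exist indices $l_j < l_{j'}$ with $\|(x_0^{(l_{j'})}, y^{(l_{j'})}) - (x_0^{(l_j)}, y^{(l_j)})\| < \delta$. The optimality cut generated at $l_j$ belongs to \eqref{eq:JRMP} at iteration $l_{j'}$; evaluating it at the optimal solution of that later master problem and summing over $\omega$ (using the constraint $\eta_0 \ge \sum_\omega \eta_\omega$) gives $obj_{\textrm{JRMP}^{(l_{j'})}} = \eta_0^{(l_{j'})} \ge \sum_{\omega=1}^{s}\eta_\omega^{(l_{j'})} \ge U^{(l_j)} + \delta_{j,j'}$, where $\delta_{j,j'} := \sum_{\omega=1}^{s}\big[ (\lambda_\omega^{(l_j)})^{\textrm{T}} B_\omega (y_\omega^{(l_{j'})}-y_\omega^{(l_j)}) + (\mu_\omega^{(l_j)})^{\textrm{T}}(x_0^{(l_{j'})}-x_0^{(l_j)}) \big]$ collects the cut-correction terms. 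On the other hand, because the algorithm never terminates under our assumption, at iteration $l_{j'}$ the optimality gap still exceeds $\epsilon$, so $\eta_0^{(l_{j'})} \le LBD < UBD - \epsilon \le U^{(l_j)} - \epsilon$. Combining the two inequalities yields $\delta_{j,j'} < -\epsilon$ for every such pair $l_j < l_{j'}$.

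It remains to contradict this. One bounds $|\delta_{j,j'}| \le C\,M\,\|(x_0^{(l_{j'})}, y^{(l_{j'})})-(x_0^{(l_j)}, y^{(l_j)})\| < C M \delta$, where $C$ depends only on the matrices $B_\omega$ and the dimensions and $M$ is a uniform bound on the dual solutions $\{(\lambda_\omega^{(l)},\mu_\omega^{(l)})\}$ of the Benders primal subproblems; taking $\delta < \epsilon/(CM)$ then contradicts $\delta_{j,j'} < -\epsilon$ and finishes the proof. I expect the uniform bound $M$ to be the main obstacle. When every $X_\omega$ is polyhedral it is immediate, because the dual feasible region of \eqref{eq:BPP} is independent of the right-hand-side data $(x_0^{(l)},y^{(l)})$ and an optimal dual solution may be chosen among its finitely many vertices. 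In the general convex case one combines Assumption~\ref{ass:slater} with Assumption~\ref{ass:boundedness}: the right-hand-side data enter \eqref{eq:BPP} affinely and range over a compact set, so a Slater point and the primal optimal value vary in a controlled way, giving a uniform lower bound on the Slater margin and hence the classical a priori bound on the multipliers. Alternatively, one may simply invoke Geoffrion's finite $\epsilon$-convergence theorem for GBD, applied to the subsequence of GBD iterations with feasible \eqref{eq:BPP}, once it is observed that the Lagrangian cuts and the bounds $\eta_0 \le UBD$, $\eta_0 \ge LBD$ only strengthen \eqref{eq:JRMP}.
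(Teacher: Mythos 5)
Your proposal is correct and follows essentially the same route as the paper's proof: contradiction, compactness of $X_0 \times Y_1 \times \cdots \times Y_s$, boundedness of the Benders multipliers via Assumption~\ref{ass:slater}, and the observation that the cut-correction term must simultaneously be below $-\epsilon$ and arbitrarily small. The one place your write-up is shakier is the claim of a \emph{globally} uniform Slater margin over the whole compact right-hand-side set (which need not hold near the boundary of the feasibility region); the paper only needs local uniform boundedness of the multipliers in a neighborhood of the subsequential limit point, which is exactly what Geoffrion's Lemma~2.1 delivers, and your stated fallback of invoking Geoffrion's finite $\epsilon$-convergence argument amounts to the same thing.
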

\begin{proof}
A similar proposition has been proved in the context of GBD in \cite{geoffrion1972gbd} (as Theorem 2.5). The central idea of the proof can be used here for JD. 

Suppose the JD algorithm includes an infinite number of GBD iterations at which the Benders primal problem BPP is feasible. Let superscript $(n)$ index these GBD iterations, $\{(\eta_0^{(n)},x_0^{(n)},y_{1}^{(n)},...,y_{s}^{(n)})\}$ be the sequence of optimal solutions of JRMP and $\{(\mu_\omega^{(n)},\lambda_\omega^{(n)})\}$ be the sequence of dual solutions of BPP. Since $\{\eta_0^{(n)}\}$ is nondecreasing and is bounded from above, so a subsequence of it converges to a finite value, say $\eta_0^*$. Due to the compactness of $X_0$, $Y_1, \cdots, Y_s$, a subsequence of $\{(x_0^{(n)},y_{1}^{(n)},...,y_{s}^{(n)})\}$, say, $\{(x_0^{(n_i)},y_{1}^{(n_i)},...,y_{s}^{(n_i)})\}$, converges to $(x_0^*,y_{1}^*,...,y_{s}^*) \in X_0 \times Y_1 \times \cdots \times Y_s$. Solving BPP in this subsequence of GBD iterations can be viewed as point-to-set mappings from points in $X_0 \times Y_1 \times \cdots \times Y_s$ to the relevant Lagrange multiplier sets. From Lemma 2.1 of \cite{geoffrion1972gbd} and Assumption \ref{ass:slater}, such a mapping is uniformly bounded in some open neighborhood of the point it maps from. Let such open neighborhood of $(x_0^*,y_{1}^*,...,y_{s}^*)$ be $N(x_0^*,y_{1}^*,...,y_{s}^*)$, then $\exists t$ such that $\forall n_i > t$, $(x_0^{(n_i)},y_{1}^{(n_i)},...,y_{s}^{(n_i)}) \in N(x_0^*,y_{1}^*,...,y_{s}^*)$, and then the relevant subsequence of Lagrange multipliers is bounded, which must contain a subsequence converging to $\{\mu_\omega^\star,\lambda_\omega^\star\}$. Therefore, there exists a subsequence of $\{(\eta_0^{(n)},x_0^{(n)},y_{1}^{(n)},...,y_{s}^{(n)},\mu_\omega^{(n)},\lambda_\omega^{(n)})\}$, say, $\{(\eta_0^{(m)},x_0^{(m)},y_{1}^{(m)},...,y_{s}^{(m)},\mu_\omega^{(m)},\lambda_\omega^{(m)})\}$, which converges to $\{(\eta_0^*,x_0^*,y_{1}^*,...,y_{s}^*,\mu_\omega^*,\lambda_\omega^*)\}$. 

Consider any GBD iteration $m>1$ in this convergent subsequence. Let $UBD$ and $LBD$ be the upper and lower bounds after this GBD iteration, then
\begin{equation*}
obj_{BPP^{(m-1)}} \geq UBD,
\end{equation*}
\begin{equation*}
LBD \geq \eta^{(m)},
\end{equation*}
and that the JD algorithm does not terminate after GBD iteration $m$ implies
\begin{equation*}
UBD > LBD + \epsilon,
\end{equation*}
therefore 
\begin{equation} \label{eq:finite1}
obj_{BPP^{(m-1)}} > \eta^{(m)} + \epsilon. 
\end{equation}
According to how JRMP is constructed, 
\begin{equation} \label{eq:finite2}
\begin{split}
\eta^{(m)} \geq &  obj_{BPP^{(m-1)}} +  \\
&  \sum_{\omega=1}^{s} \left[(\lambda_\omega^{(m-1)})^{\textrm{T}} B_{\omega}(y_{\omega}^{(m)}-y_{\omega}^{(m-1)})+  ({ \mu_{\omega}^{(m-1)}})^{\textrm{T}}\left( x_0^{(m)} - x_0^{(m-1)} \right) \right].
\end{split}
\end{equation}
Equations \eqref{eq:finite1} and \eqref{eq:finite2} imply that
\begin{equation} \label{eq:finite3}
0 > \sum_{\omega=1}^{s} \left[(\lambda_\omega^{(m-1)})^{\textrm{T}} B_{\omega}(y_{\omega}^{(m)}-y_{\omega}^{(m-1)})+ (\mu_{\omega}^{(m-1)})^{\textrm{T}}\left( x_0^{(m)} - x_0^{(m-1)} \right) \right] + \epsilon.
\end{equation}
However, when $m$ is sufficiently large, $y_{\omega}^{(m)}-y_{\omega}^{(m-1)}$ and $x_0^{(m)} - x_0^{(m-1)}$ are sufficiently close to 0 while $\mu_{\omega}^{(m-1)}$ and $\lambda_\omega^{(m-1)}$ are sufficiently close to limit points $\mu_\omega^*$ and $ \lambda_\omega^*$, so the right-hand-side of Equation \eqref{eq:finite3} is a positive value (as $\epsilon>0$). This contradiction implies that the JD algorithm cannot include an infinite number of GBD iterations at which BPP is feasible.

\end{proof}

\begin{theorem} \label{thm:JDfinite}
With an initial feasible point, the JD algorithm shown in Table \ref{tab:JD1} terminates in a finite number of iterations with an $\epsilon$-optimal solution, if one the following three conditions is satisfied: \\
(a) Set $X_{\omega}$ is polyhedral $\forall \omega \in \{1, \cdots, s\}$. \\
(b) Set $X_0 \times Y_1 \times \cdots \times Y_s$ is finite discrete. \\
(c) There are only a finite number of GBD iterations at which the Benders primal problem BPP is infeasible. 
\end{theorem}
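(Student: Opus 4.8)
\emph{Proof proposal.} The plan is to argue by contradiction: suppose the algorithm shown in Table \ref{tab:JD1} does not terminate, so it executes infinitely many JD iterations, each of which contains exactly one LD iteration and at most one GBD iteration. By Proposition \ref{prop:LDfinite} only finitely many LD iterations can occur between two consecutive GBD iterations, and the same argument shows that only finitely many LD iterations can follow a would-be last GBD iteration; hence a non-terminating run must contain infinitely many GBD iterations. It therefore suffices to show that under any one of the hypotheses (a)--(c) only finitely many GBD iterations are possible.

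Hypothesis (a) yields this conclusion directly via Proposition \ref{prop:BDfinite}, and hypothesis (b) yields it directly via Proposition \ref{prop:x0finite}. Under hypothesis (c), Proposition \ref{prop:GBDfinite} forbids an infinite number of GBD iterations at which the Benders primal problem BPP is feasible, while (c) itself forbids an infinite number of GBD iterations at which BPP is infeasible; adding the two finite counts shows that only finitely many GBD iterations can occur. In all three cases this contradicts the existence of infinitely many GBD iterations, so the algorithm terminates after a finite number of iterations.

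It remains to check that the point returned at termination is an $\epsilon$-optimal solution of \eqref{eq:P}. Since an initial feasible point of \eqref{eq:P} is supplied, \eqref{eq:P} is feasible and bounded by Assumption \ref{ass:boundedness}, so the algorithm never exits through an infeasible lower bounding problem; it terminates in step (1.e) or step (2.c) with $UBD \le LBD + \epsilon$. Throughout the run $LBD$ is updated only to the optimal value of a Lagrangian subproblem (LS$^k$) or of \eqref{eq:JRMP}, each of which is a valid lower bound for \eqref{eq:P} by weak duality and by Proposition \ref{prop:JRMP}; hence $LBD \le obj_{\text{P}}$ always holds. Likewise $UBD$ is updated only to the objective value of the current incumbent, which is always a point feasible for \eqref{eq:P} produced by a primal subproblem \eqref{eq:PP} or a Benders primal subproblem \eqref{eq:BPP}; hence $obj_{\text{P}} \le UBD$ and the incumbent attains $UBD$. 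Combining, the returned incumbent has objective value $UBD \le LBD + \epsilon \le obj_{\text{P}} + \epsilon$, so it is $\epsilon$-optimal.

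The substantive work is done by Propositions \ref{prop:LDfinite}, \ref{prop:BDfinite}, \ref{prop:x0finite} and \ref{prop:GBDfinite}, so the only place that needs care is the bookkeeping in the last paragraph: one must verify that the feasibility branches of the algorithm (solving \eqref{eq:FPw}, the Benders feasibility subproblems, and the initial feasibility problem \eqref{eq:IFP}) never overwrite the incumbent or corrupt the validity of $UBD$ and $LBD$, and that the supplied feasible point indeed makes $UBD$ finite from the first LD iteration onward, so that the test $UBD \le LBD + \epsilon$ is eventually decisive. I do not expect any genuine obstacle beyond this careful accounting.
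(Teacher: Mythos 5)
Your proposal is correct and follows essentially the same route as the paper: it combines Propositions \ref{prop:LDfinite}, \ref{prop:BDfinite}, \ref{prop:x0finite} and \ref{prop:GBDfinite} to bound the number of LD and GBD iterations under each of (a)--(c), and then invokes the validity of the bounds (Proposition \ref{prop:JRMP} plus weak duality and feasibility of the incumbent) together with the termination test $UBD \le LBD + \epsilon$ to conclude $\epsilon$-optimality. Your version is in fact slightly more careful than the paper's on two points — explicitly handling the LD iterations after a would-be last GBD iteration, and spelling out why $UBD$ is attained by a feasible incumbent — but these are refinements of the same argument, not a different one.
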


\begin{proof}
From Proposition \ref{prop:LDfinite}, the JD algorithm can only include a finite number of LD iterations. From Propositions \ref{prop:BDfinite} and \ref{prop:x0finite}, when condition (a) or (b) is satisfied, the JD algorithm can only include a finite number of BD iterations. From Proposition \ref{prop:GBDfinite}, the JD algorithm can only have a finite number of GBD iterations at which the Benders primal problem BPP is feasible, and together with condition (c), it implies that the JD algorithm can only include a finite number of BD iterations. Therefore, if one of the three conditions is satisfied, the JD algorithm can only include a finite number LD and BD iterations before termination. 

On the other hand, according to Proposition \ref{prop:JRMP}, the JD algorithm never excludes an optimal solution. This together with the termination criterion ensures that the solution returned is $\epsilon$-optimal.
\end{proof}

\begin{remark}
Condition (c) in Theorem \ref{thm:JDfinite} is actually not a very restrictive condition, because we can always "soften" the complicating constraints in Problem \eqref{eq:P} (i.e., penalize the violation of these constraints in the objective function) so that Problem \eqref{eq:BPP} is always feasible. 
\end{remark}

\section{Enhancements to joint decomposition} \label{sec:sec4}

The solution of Problem \eqref{eq:JRMP} is the bottleneck of the JD algorithm, even considering that the problem is solved only when necessary. Problem \eqref{eq:JRMP} is challenging due to two major reasons. One is that the number of complicating variables in Problem \eqref{eq:JRMP} is dependent on the number of scenarios, so the size of Problem \eqref{eq:JRMP} is large (although smaller than the original problem). The other is that the number of constraints in the problem grows with the JD iteration; in other words, Problem \eqref{eq:JRMP} becomes more and more challenging as JD progresses. In this section, we introduce two ways to mitigate the difficulty in solving Problem \eqref{eq:JRMP}: 
\begin{enumerate}
\item To solve a convex relaxation of Problem \eqref{eq:JRMP} before solving Problem \eqref{eq:JRMP}. If the solution of the convex relaxation can improve the lower bound, then skip solving Problem \eqref{eq:JRMP}.

\item To perform domain reduction iteratively in JD in order to keep reducing the ranges of the complicating variables. This way, the convex relaxation of Problem \eqref{eq:JRMP} is progressively tightened and Problem \eqref{eq:JRMP} itself does not become much harder as the algorithm progresses. 
\end{enumerate}

In addition, domain reduction for the complicating variables can make other nonconvex JD subproblems easier, including Problems \eqref{eq:LSw} and \eqref{eq:PP}. Domain reduction for the linking variables can also tighten the Lagrangian relaxation gap \cite{Caroe199937}; in extreme cases, the Lagrangian relaxation gap can diminish and there is no need to solve Problem \eqref{eq:JRMP} in JD to close the optimality gap. Note that we do not perform domain reduction for non-complicating variables, because normally reducing ranges on these variables do not help much to tighten convex relaxations and ease the solution of nonconvex subproblems.

\subsection{Convex relaxation and domain reduction}

The convex relaxation of Problem \eqref{eq:JRMP} is a valid lower bounding problem for Problem \eqref{eq:JRMP} and consequently for Problem \eqref{eq:P} as well. It can be written as: 
\begin{equation} \label{eq:JRMPR}  \tag{JRMPR$^{(l)}$}
\begin{split}
&\min_{\substack{x_0,\eta_0,\eta_1,...,\eta_s \\ y_{1},...,y_{s}}} \quad \eta_0 \\ 
& \textrm{s.t.} \;\;  \eta_0 \ge \sum_{\omega=1}^{s}\eta_\omega, \\ 
& \quad \quad  \eta_\omega  \ge  obj_{\textrm{BPP}_\omega^{(j)}} + ( \lambda_\omega^{(j)})^{\textrm{T}} B_{\omega}(y_{\omega}-y_{\omega}^{(j)})+  ( \mu_\omega^{(j)})^{\textrm{T}}\left( x_0 - x_0^{(j)} \right), \\
& \qquad \qquad \qquad \qquad \qquad \qquad \qquad \qquad  \forall \omega \in \{1,...,s\}, \quad \forall  j \in {T^{(l)}}, \\ 
& \quad \quad  0  \ge obj_{\textrm{BFP}_\omega^{(j)}} + (\lambda_\omega^{(j)})^{\textrm{T}} B_{\omega}(y_{\omega}-y_{\omega}^{(j)})+ ( \mu_\omega^{(j)})^{\textrm{T}}\left( x_0 - x_0^{(j)} \right), \\
& \qquad \qquad \qquad \qquad \qquad \qquad \qquad \qquad \forall \omega \in \{1,...,s\}, \quad  
 \forall  j \in {S^{(l)}}, \\ 
& \quad \quad \eta_\omega  \ge  obj_{\textrm{BPP}_\omega^j} + ( \lambda_\omega^j)^{\textrm{T}} B_{\omega}(y_{\omega}-y_{\omega}^j)+  ({ \mu_{\omega}^{j}})^{\textrm{T}}\left( x_0 - x_0^{j} \right), \\
& \qquad \qquad \qquad \qquad \qquad \qquad \qquad \qquad
\forall \omega \in \{1,...,s\}, \quad \forall  j \in {T^k}, \\ 
& \quad \quad  0  \ge obj_{\textrm{BFP}_\omega^j} + (\lambda_\omega^j)^{\textrm{T}} B_{\omega}(y_{\omega}-y_{\omega}^{j})+ ( \mu_\omega^j)^{\textrm{T}}\left( x_0 - x_0^{j} \right), \\
& \qquad \qquad \qquad \qquad \qquad \qquad \qquad \qquad \forall \omega \in \{1,...,s\}, \quad  \forall  j \in S^k, \\ 
& \quad \quad \eta_0 \le UBD, \\ 
& \quad \quad \eta_0 \ge LBD, \\    
& \quad \quad \eta_\omega  \ge  obj_{\textrm{LS}_\omega^i} +  ({ \pi_{\omega}^{i}})^{\textrm{T}}x_0, \quad \forall \omega \in \{1,...,s\}, \quad \forall  i \in {R^k}, \\ 
& \quad \quad x_0\in \hat{X}_0,\quad y_{\omega} \in \hat{Y}_{\omega}, \quad \forall \omega \in \{1,...,s\}.  \\ 
\end{split}
\end{equation}
Here $\hat{X}_0$ and $\hat{Y}_{\omega}$ denote the convex relaxations of ${X}_0$ and ${Y}_{\omega}$. Let $obj_{{JRMPR}^{(l)}}$ be the optimal objective of Problem \eqref{eq:JRMPR}.  

Since Problem \eqref{eq:JRMPR} is also a valid convex relaxation of Problem \eqref{eq:P}, the solution of Problem \eqref{eq:JRMPR} can be exploited to eliminate the parts of variable ranges that cannot include an optimal solution of Problem \eqref{eq:P}, using marginal based domain reduction method. This method was first proposed in \cite{ryoo1996bat} (and it was called range reduction therein). The following proposition lays the foundation of marginal based domain reduction for complicating variables $y_\omega$ in JD, which results directly from Theorem 2 in \cite{ryoo1996bat}.

\begin{proposition} \label{prop:rangered1}
	Consider the following bounds on $y_{\omega,j}$ ($\forall \omega \in \{1, \cdots, s\}, \;\; \forall j \in \{1, \cdots, n_y\}$):
	\begin{align*}
	y_{\omega,j} - y_{\omega,j}^{up} \le 0 ,  \\
	y_{\omega,j}^{lo} - y_{\omega,j} \le 0 , 	
	\end{align*}
	whose Lagrange multipliers obtained at the solution of Problem \eqref{eq:JRMPR} are $u_{\omega,j}$, $v_{\omega,j}$. Let $\mathbb{J}^{(l)}_{1,\omega}$ include indices of upper bounds whose $u_{\omega,j}$ are nonzero, and  $\mathbb{J}^{(2)}_{1,\omega}$ include indices of lower bounds whose $v_{\omega,j}$ are nonzero, then the following constraints do not exclude an optimal solution of \eqref{eq:P}:
	\begin{equation*} 
	\begin{split}
	y_{\omega,j} & \ge  y_{\omega,j}^{up}- \frac{(UBD-obj_{{JRMPR}^{(l)}})}{u_{\omega,j}}, \quad \forall j \in \mathbb{J}^{(l)}_{1,\omega} , \;\; \forall \omega \in \{1,...,s\}, \\
	 y_{\omega,j} & \le  y_{\omega,j}^{lo}+ \frac{(UBD-obj_{{JRMPR}^{(l)}})}{v_{\omega,j}}, \quad \forall j \in \mathbb{J}^{(l)}_{2,\omega} , \; \forall \omega \in \{1,...,s\}. 		
	\end{split}
	\end{equation*}
 
	\end{proposition}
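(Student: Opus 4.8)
The plan is to obtain Proposition~\ref{prop:rangered1} as a direct application of the marginal-based range reduction result of \cite{ryoo1996bat} (Theorem~2 there) to the convex relaxation \eqref{eq:JRMPR}. The first step is to record the two ingredients that make that theorem applicable. First, Problem \eqref{eq:JRMPR} is a valid convex relaxation of \eqref{eq:P}: by Proposition~\ref{prop:JRMP} the cut-based problem \eqref{eq:JRMP} is a valid lower bounding problem, and since $X_0 \subseteq \hat{X}_0$ and $Y_\omega \subseteq \hat{Y}_\omega$, relaxing these sets only enlarges the feasible region, so $obj_{JRMPR^{(l)}} \le obj_{\mathrm P} \le UBD$. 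Moreover, exactly as in the proof of Proposition~\ref{prop:JRMP}, any optimal solution $(x_0^*,x_1^*,\dots,x_s^*,y_1^*,\dots,y_s^*)$ of \eqref{eq:P} lifts, via $\eta_\omega = c_\omega^{\mathrm T}x_\omega^*$ and $\eta_0 = obj_{\mathrm P}$, to a point feasible for \eqref{eq:JRMPR} with objective value $obj_{\mathrm P} \le UBD$ (all Benders optimality/feasibility cuts and Lagrangian cuts hold because they underestimate the relevant value functions evaluated at $(x_0^*,y_\omega^*)$, and $LBD \le obj_{\mathrm P} \le UBD$). Second, the bounds $y_{\omega,j} - y_{\omega,j}^{up} \le 0$ and $y_{\omega,j}^{lo} - y_{\omega,j} \le 0$ appear explicitly among the constraints describing $\hat{Y}_\omega$, and $u_{\omega,j}\ge 0$, $v_{\omega,j}\ge 0$ are their Lagrange multipliers at the optimal solution of \eqref{eq:JRMPR}.

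Next I would run the standard sensitivity argument for one index pair $(\omega,j)$ with $u_{\omega,j}>0$. Let $\phi_{\omega,j}(b)$ be the optimal value of \eqref{eq:JRMPR} when the right-hand side of $y_{\omega,j}\le y_{\omega,j}^{up}$ is replaced by $b$ and all other data are kept fixed. Using the optimal Lagrange multipliers of \eqref{eq:JRMPR} together with weak duality (the bounding estimate underlying Theorem~2 of \cite{ryoo1996bat}), one gets $\phi_{\omega,j}(y_{\omega,j}^{up}-\delta) \ge obj_{JRMPR^{(l)}} + u_{\omega,j}\,\delta$ for every $\delta \ge 0$; convexity of \eqref{eq:JRMPR} is what makes this first-order estimate a genuine lower bound rather than merely a local one. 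Now suppose, for contradiction, that some optimal solution of \eqref{eq:P} had $y_{\omega,j}^* < y_{\omega,j}^{up} - (UBD - obj_{JRMPR^{(l)}})/u_{\omega,j}$. Its lift described above is feasible for \eqref{eq:JRMPR} with the perturbed right-hand side $b = y_{\omega,j}^*$ and objective value $obj_{\mathrm P}$, so $\phi_{\omega,j}(y_{\omega,j}^*) \le obj_{\mathrm P} \le UBD$; but the estimate above gives $\phi_{\omega,j}(y_{\omega,j}^*) > obj_{JRMPR^{(l)}} + (UBD - obj_{JRMPR^{(l)}}) = UBD$, a contradiction. Hence $y_{\omega,j} \ge y_{\omega,j}^{up} - (UBD - obj_{JRMPR^{(l)}})/u_{\omega,j}$ is satisfied by every optimal solution of \eqref{eq:P}. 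The symmetric argument, perturbing instead the right-hand side of $y_{\omega,j}^{lo} - y_{\omega,j} \le 0$ and using $v_{\omega,j}>0$, yields the lower-bound inequalities.

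Finally I would observe that the per-pair reasoning is precisely what permits imposing all the reduced bounds at once: each added inequality, considered in isolation, is violated by no optimal solution of \eqref{eq:P}, so any optimal solution satisfies all of them simultaneously, for $j \in \mathbb{J}_{1,\omega}^{(l)}$, $j \in \mathbb{J}_{2,\omega}^{(l)}$ and $\omega \in \{1,\dots,s\}$. I expect the only delicate point — the main technical obstacle of the argument — to be the claim that an optimal solution of \eqref{eq:P} lifts to a \emph{feasible} point of the perturbed relaxation with objective at most $UBD$; this is not hard but must be verified by re-using, almost verbatim, the validity check in the proof of Proposition~\ref{prop:JRMP} (the Benders and Lagrangian cuts are valid underestimators, and $LBD \le obj_{\mathrm P} \le UBD$). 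Once that is granted, the inequalities in the statement follow immediately from the subgradient/weak-duality estimate, and nothing beyond routine bookkeeping remains.
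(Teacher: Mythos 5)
Your proposal is correct and follows exactly the route the paper takes: the paper gives no proof beyond invoking Theorem~2 of the marginal-based range reduction reference, and your argument is precisely the standard proof of that theorem (subgradient/weak-duality bound on the perturbed optimal value of the convex relaxation, plus the observation that an optimal solution of \eqref{eq:P} lifts to a feasible point of \eqref{eq:JRMPR} with objective at most $UBD$), correctly instantiated for \eqref{eq:JRMPR}.
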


	The following proposition states a similar result for the linking variables $x_0$:
	
\begin{proposition} \label{prop:rangered2}
Consider the following bounds on $x_{0,j}$ ($\forall j \in \{1, \cdots, n_0\}$):
	\begin{align*}
	x_{0,j} - x_{0,j}^{up} \le 0 ,  \\
	x_{0,j}^{lo} - x_{0,j} \le 0 , 	
	\end{align*}
	whose Lagrange multipliers obtained at the solution of Problem \eqref{eq:JRMPR} are $u_{0,j}$, $v_{0,j}$. Let $\mathbb{J}^{(l)}_{1,0}$ include indices of upper bounds whose $u_{0,i}$ are nonzero, and $\mathbb{J}^{(l)}_{2,0}$ include indices of lower bounds whose $v_{0,i}$ are nonzero, then the following constraints do not exclude an optimal solution of \eqref{eq:P}:
\begin{equation*}
\begin{split}
x_{0,j} &  \ge  x_{0,j}^{up}- \frac{(UBD-obj_{JRMPR})}{u_{0,j}}, \quad \forall j \in \mathbb{J}^{(l)}_{1,0} \\
x_{0,j} & \le  x_{0,j}^{lo}+\frac{(UBD-obj_{{JRMPR}^{(l)}})}{v_{0,j}}, \quad \forall j \in \mathbb{J}^{(l)}_{2,0}
\end{split}
\end{equation*}	
\end{proposition}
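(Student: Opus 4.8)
The plan is to recognize Proposition \ref{prop:rangered2} as an instance of the marginals-based range reduction result (Theorem 2 of \cite{ryoo1996bat}) applied to the convex relaxation \eqref{eq:JRMPR}, exactly parallel to how Proposition \ref{prop:rangered1} treats the $y_\omega$ variables. The only structural facts I need are already in hand: \eqref{eq:JRMPR} is a valid convex relaxation of \eqref{eq:P} (established immediately before the proposition), and the bounds $x_{0,j}\le x_{0,j}^{up}$ and $x_{0,j}^{lo}\le x_{0,j}$ either appear as, or can be added as, explicit linear inequality constraints of \eqref{eq:JRMPR} inside the description of $\hat{X}_0$, so that the Lagrange multipliers $u_{0,j},v_{0,j}$ are well defined at the optimal solution.

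First I would fix an index $j$ with $u_{0,j}\neq 0$ and introduce the perturbation function $\phi_j(t)$ defined as the optimal value of \eqref{eq:JRMPR} when the constraint $x_{0,j}\le x_{0,j}^{up}$ is replaced by $x_{0,j}\le t$ (all other data unchanged). Two properties drive the argument: (i) $\phi_j$ is convex in $t$, since \eqref{eq:JRMPR} is a convex program and the optimal value of a convex program is a convex function of a right-hand-side perturbation; (ii) $-u_{0,j}$ is a subgradient of $\phi_j$ at $t=x_{0,j}^{up}$, which is the standard Lagrangian-sensitivity interpretation of the multiplier. Combining (i) and (ii) with the subgradient inequality gives, for every $t\le x_{0,j}^{up}$,
\[
\phi_j(t)\ \ge\ \phi_j(x_{0,j}^{up}) - u_{0,j}\,(t-x_{0,j}^{up})\ =\ obj_{JRMPR^{(l)}} + u_{0,j}\,(x_{0,j}^{up}-t).
\]

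Next I would argue by contradiction. Suppose some optimal solution of \eqref{eq:P} has $x_{0,j}=t^\star$ with $t^\star < x_{0,j}^{up} - (UBD-obj_{JRMPR^{(l)}})/u_{0,j}$. Because \eqref{eq:JRMPR} is a relaxation of \eqref{eq:P} and this solution trivially satisfies $x_{0,j}\le t^\star$, it is feasible for the problem defining $\phi_j(t^\star)$, whence $\phi_j(t^\star)\le obj_{\text{P}}\le UBD$; but the displayed inequality gives $\phi_j(t^\star) > obj_{JRMPR^{(l)}} + (UBD-obj_{JRMPR^{(l)}}) = UBD$, a contradiction. Hence every optimal solution of \eqref{eq:P} satisfies $x_{0,j}\ge x_{0,j}^{up} - (UBD-obj_{JRMPR^{(l)}})/u_{0,j}$, which is the first asserted cut; the bound associated with $x_{0,j}^{lo}-x_{0,j}\le 0$ follows from the symmetric perturbation $x_{0,j}\ge t$ with multiplier $v_{0,j}$.

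I expect the main obstacle to be purely the sensitivity-theory bookkeeping: verifying that \eqref{eq:JRMPR} admits the required multipliers at its optimum and that $-u_{0,j}$ is genuinely a subgradient of the (possibly nondifferentiable) value function $\phi_j$ on $\{t\le x_{0,j}^{up}\}$. This is precisely what Theorem 2 of \cite{ryoo1996bat} supplies under the hypotheses that hold here, so in the write-up it can be invoked rather than reproved, mirroring the treatment of Proposition \ref{prop:rangered1}.
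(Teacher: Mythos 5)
Your proof is correct and takes essentially the same approach as the paper: the paper offers no explicit proof of this proposition (nor of Proposition~\ref{prop:rangered1}, which it says "results directly from Theorem 2 in \cite{ryoo1996bat}"), and your perturbation-function/subgradient argument is precisely the standard proof of that cited theorem specialized to \eqref{eq:JRMPR}. The two facts your argument rests on --- that \eqref{eq:JRMPR} is a valid convex relaxation of \eqref{eq:P}, so an optimal solution of \eqref{eq:P} stays feasible for the perturbed relaxation with value at most $UBD$, and that $-u_{0,j}$ (resp.\ $v_{0,j}$) is a subgradient of the value function at the nominal bound --- are exactly what the cited result packages, so the write-up is sound.
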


According to Propositions \ref{prop:rangered1} and \ref{prop:rangered2}, the bounds of nonconvex and linking variables can be updated via the following range reduction calculation: 
\begin{equation} \label{eq:MDR} \tag{MDR$^{(l)}$}
\begin{split}
y_{\omega,j}^{up} & =\min \left\{y_{\omega,j}^{up}, \;\; y_{\omega,j}^{lo} + \frac{G^{(l)}}{u_{\omega,j}} \right\}, \quad \forall j \in \mathbb{J}^{(l)}_{1,\omega}, \;\; \forall \omega \in \{1,...,s\},  \\
y_{\omega,j}^{lo} & =\max \left\{y_{\omega,j}^{lo}, \;\; y_{\omega,j}^{up} - \frac{G^{(l)}}{v_{\omega,j}} \right\}, \quad \forall j \in \mathbb{J}^{(l)}_{2,\omega}, \;\; \forall \omega \in \{1,...,s\},   \\
x_{0,j}^{up} & =\min \left\{x_{0,j}^{up}, \;\; x_{0,j}^{lo}+ \frac{G^{(l)}}{u_{0,j}} \right\}, \quad \forall j \in \mathbb{J}^{(l)}_{1,0}, \\
x_{0,j}^{lo} & =\max \left\{ x_{0,j}^{lo}, \;\; x_{0,j}^{up}- \frac{G^{(l)}}{v_{0,j}} \right\}, \quad \forall j \in \mathbb{J}^{(l)}_{2,0}, \\
\end{split}
\end{equation}
where $G^{(l)}=UBD-obj_{{RMPCR}^{(l)}}$.

The effectiveness of marginal based domain reduction relies on how many bounds are active, the magnitude of Lagrange multipliers of active bounds at the solution of \ref{eq:JRMPR}, and how often \ref{eq:JRMPR} is solved. In order to achieve effective domain reduction more consistently, we also introduce optimization based domain reduction in JD. Optimization based domain reduction, or called bound contraction or bound tighening \cite{Zamora1999} \cite{Maranas1996}, is to maximize or minimize a single variable over a convex relaxation of the feasible set of the original problem. For example, if we are to estimate the upper bound of a linking variable $x_{0,j}$ at JD iteration $k$, we can solve the following optimization problem:
\begin{equation} \label{eq:ODR0} \tag{ODRStd$_i^k$}
\begin{split}
& \max_{\substack{ x_0,x_{1},...,x_{s} \\ y_{1},,...,y_{s}}} \;  x_{0,i}\\ 
& \textrm{s.t.}\;\; {x_{0}}  = H_{\omega}x_{\omega}, \quad \forall {\omega} \in \{1,...,s\}, \\  
& \quad \;\;\; A_{\omega}{x_{\omega}} + B_{\omega}y_{\omega} \le 0, \quad \forall {\omega} \in \{1,...,s\},\\ 
& \quad \;\;\;  \sum_{\omega=1}^s  {c_{\omega}^{\textrm{T}}}{x_{\omega}}\le UBD, \\
& \quad \;\;\; {x_{0}} \in {X_{0}^k}, \\
& \quad \;\;\; {x_{\omega}} \in {X_{\omega}}, \; {y_{\omega}} \in {\hat{Y}_{\omega}^k}, \quad \forall {\omega} \in \{1,...,s\}. \\ 
\end{split}
\end{equation}
The third group of constraints in Problem \eqref{eq:ODR0} utilizes the known upper bound of \eqref{eq:P} to tighten the convex relaxation, but it cannot be included in Problem \eqref{eq:ODR0} when $UBD$ is not available (e.g., before a feasible solution of \eqref{eq:P} is known). We now index sets $X_{0}$, $\hat{Y}_{\omega}$ with the JD iteration number $k$, as these sets may change after the domain reduction calculations. 

Problem \eqref{eq:ODR0} represents the standard optimization based domain reduction formulation, but it can be further enhanced in the JD algorithm, via the incorporation of valid cuts derived from other JD subproblems. First, we can add the following constraint:
\[ \sum_{\omega=1}^s  {c_{\omega}^{\textrm{T}}}{x_{\omega}}\ge LBD. \]
This constraint is redundant in the classical branch-and-bound based global optimization, as $LBD$ is obtained via convex relaxation as well. In JD, $LBD$ is obtained via Lagrangian subproblems and JD relaxed master problems, which may be tigher than convex relaxations of the original problem, so this constraint may enhance Problem \eqref{eq:ODR0}. Second, we can include constraints \eqref{eq:star} (that are drived from Problem \eqref{eq:JRMP}). Therefore, we can write the enhanced optimization based domain reduction formulation as:
\begin{equation} \label{eq:ODR} \tag{ODR$_i^k$}
\begin{split}
& \min_{\substack{ x_0,x_{1},...,x_{s} \\ y_{1},,...,y_{s}}} / \max_{\substack{ x_0,x_{1},...,x_{s} \\ y_{1},,...,y_{s}}}\;  x_{0,i}\\ 
& \textrm{s.t.}\;\; {x_{0}}  = H_{\omega}x_{\omega}, \quad \forall {\omega} \in \{1,...,s\}, \\  
& \quad \;\;\; A_{\omega}{x_{\omega}} + B_{\omega}y_{\omega} \le 0, \quad \forall {\omega} \in \{1,...,s\},\\  
& \quad \;\;\; \sum_{\omega=1}^s  {c_{\omega}^{\textrm{T}}}{x_{\omega}}\le UBD, \\ 
& \quad \;\;\; \sum_{\omega=1}^s {c_{\omega}^{\textrm{T}}}{x_{\omega}}\ge LBD, \\ 
& \quad \;\;\;  UBD \ge  \sum_{\omega=1}^s obj_{LS_\omega^i} + \sum_{\omega=1}^s  (\pi_\omega^i)^{\textrm{T}} x_0, \quad \forall i \in R^k, \\
& \quad \;\;\; {x_{0}} \in {X_{0}^k}, \\
& \quad \;\;\;  {x_{\omega}} \in {X_{\omega}}, \; {y_{\omega}} \in {\hat{Y}_{\omega}^k}, \quad \forall {\omega} \in \{1,...,s\}. \\ 
\end{split}
\end{equation}
If we are to estimate an upper bound, then Problem \eqref{eq:ODR} is a maximization problem; otherwise, Problem \eqref{eq:ODR} is a minimization problem. 

Although Problem \eqref{eq:ODR} is convex, it can have a very large size because its size grows with the number of scenarios. Therefore, we proposed to solve Problem \eqref{eq:ODR} for $x_0$ but not for $y_\omega$. Actually, we can see in the case study section that optimization based domain reduction is time consuming even when we only solve Problem \eqref{eq:ODR} for $x_0$.

\subsection{The enhanced joint decomposition method}

Figure \ref{fig:figure2} shows the framework of the JD method that includes solving convex relaxation, Problem \eqref{eq:JRMPR}, bound tightening for $x_0$ and the domain reduction calculations. In this framework, optimization based domain reduction is performed at the beginning of the algorithm and in every LD iteration (right before the solution of nonconvex Lagrangian subproblems). Convex relaxation, Problem \eqref{eq:JRMPR} is solved before solving Problem \eqref{eq:JRMP}, and after solving Problem \eqref{eq:JRMPR}, marginal based domain reduction is performed. Problem \eqref{eq:JRMP} is not solved if Problem \eqref{eq:JRMPR} can improve the lower bound significantly; this strategy can postpone solving Problem \eqref{eq:JRMP} to a later time, so that the ranges of $x_0$ can be reduced as much as possible when a Problem \eqref{eq:JRMP} has to be solved. The detailed algorithm for the enhanced JD is shown in Table \ref{tab:JD2}.

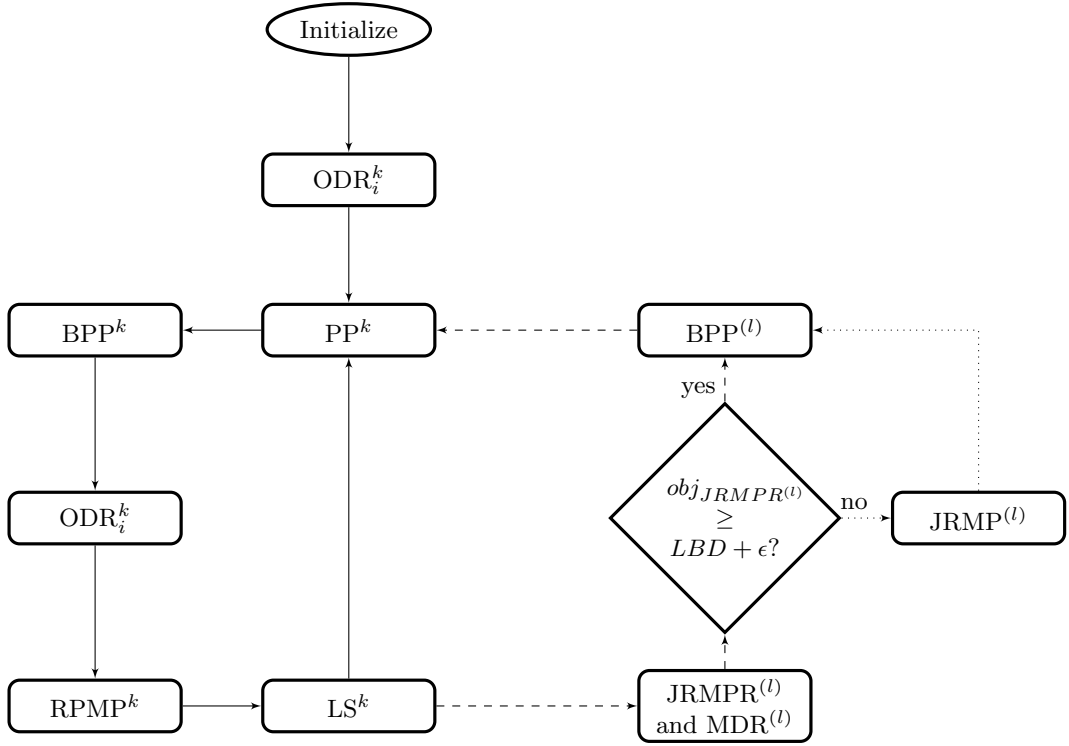
\begin{figure}[thb!]
	\begin{center}
		\centering	
		\begin{tikzpicture}[node distance = 2cm, auto]
		\node [cloud] (init0) {Initialize};
		\node [block,below of=init0] (init1) {ODR$_i^k$};
		\node [block,below of=init1] (init) {PP$^k$};
		\node [block, left of=init,xshift=-4em,yshift=0em] (identify1) {BPP$^k$};
		\node [block, below of=identify1,node distance=2.5cm] (Contract) {ODR$_i^k$};
		\node [block, below of=init,node distance=5cm] (identify2) {LS$^k$};
		\node [block, left of=identify2, xshift=-4em,yshift=0em] (item) {RPMP$^k$};
		\node [block, right of=init, node distance=5cm] (upper) {BPP$^{(l)}$};
		\node [decision, below of=upper] (decide) {$obj_{JRMPR^{(l)}}$ $\ge$ $LBD+\epsilon$?};
		\node [block, below of=decide, node distance=2.5cm] (real) {JRMPR$^{(l)}$ and MDR$^{(l)}$};
		\node [block, right of=decide, xshift=4em] (real1) {JRMP$^{(l)}$};
		\path [line] (init0) -- (init1);
		\path [line] (init1) -- (init);
		\path [line] (init) -- (identify1);
		\path [line] (identify2) -- (init);
		\path [line] (identify1) --  (Contract);
		\path [line] (Contract) --  (item);
		\path [line] (item) --  (identify2);
		
		\path [line,dashed] (upper) --  (init);
		\path [line,dashed] (identify2) -- (real);
		
		\path [line,dashed] (real) -- (decide);
		\path [line,dashed] (decide) -- node [near start] {yes} (upper);
		\path [line,dotted] (decide) -- node [near start] {no} (real1);
		\path [line,dotted] (real1) |- (upper);	
		\end{tikzpicture}
		\vspace{0.2cm}	
		\caption{The enhanced joint decomposition framework}      \label{fig:figure2}	
	\end{center}	
\end{figure}

\begin{table} [tbp] 
	\caption{Enhanced joint decomposition method - Enhancement is in bold font}
	\label{tab:JD2}              
	\begin{tabular} {c}
		\hline
		\\
		\begin{minipage} {\textwidth}
			\centering	     	 		 
			\begin{algorithmic}
				
				\STATE \underline{\textbf{Initialization}} 
				\vspace{0.1cm}
				\STATE
				\begin{enumerate}
					\setlength{\itemsep}{0pt} 
					
					\item[(I.a)] Select $x_0^1, y_1^{[1]}, \cdots, y_s^{[1]}$ that are feasible for Problem \eqref{eq:P}. 
					
					\item[(I.b)] Give termination tolerance $\epsilon > 0$. Let index sets $T^{1}= S^{1}=R^1=\emptyset$, $I^1=\{1\}$, iteration counter $k=1$, $i=1$, $l=1$, bounds $UBD=+\infty$, $LBD=-\infty$.   
					
					\item[(I.c)] \textbf{Solve Problem \eqref{eq:ODR} to update bounds of all $x_{0,i}$. } 
					
				\end{enumerate}
				
				\vspace{0.1cm}
				\STATE \underline{\textbf{LD Iteration}}
				\STATE
				\begin{enumerate}
					\setlength{\itemsep}{0pt}
					\item[(1.a)] Solve Problem \eqref{eq:PP}. If Problem \eqref{eq:PP} is infeasible, solve Problem \eqref{eq:FPw}. Let the solution obtained be $(x_{\omega}^k,y_{\omega}^k)$, and update $i=i+1$, $I^k$=$I^k \cup \{i \}$, $(y^{[i]}_1, \cdots, y^{[i]}_s)=(y^k_1, \cdots, y^k_s)$. 
					
					\item[(1.b)] Solve Problem (BPP$^k_{\omega}$) by fixing $(x_0,y_1,...,y_s)=(x_0^k,y_{1}^k,...,y_{s}^k)$. If (BPP$^k_{\omega}$) is feasible for all $\omega$, generate Benders optimality cuts with the obtained dual solution $\mu_\omega^k$ and $\lambda_\omega^k$, and update  $T^{k+1}=T^k \cup \{k\}$. If $\sum_{\omega=1}^{s} obj_{PP^k_{\omega}}<UBD$, update $UBD=\sum_{\omega=1}^{s} obj_{PP^k_{\omega}}$, and incumbent solution $(x_0^*,x_{1}^*,\cdots, x_{s}^*, y_1^*, \cdots, y_s^*)=(x_0^k,x_{1}^k, \cdots,x_{s}^k, y_1^k, \cdots, y_{s}^k)$. If Problem (BPP$^k_{\omega}$) is infeasible for at least one $\omega$, solve Problem (BFP$^k_{\omega}$). Generate Benders feasibility cuts with the obtained dual solution $\mu_\omega^k$ and $\lambda_\omega^k$, and update  $S^{k+1}=S^k \cup \{k\}$.
					
					\item[(1.c)] \textbf{Solve Problem \eqref{eq:ODR} to update bounds of all $x_{0,i}$. } 
					
					\item[(1.d)] Solve Problem \eqref{eq:RPMP}. Let $x_0^k$, $\{\theta_\omega^{[i,k]}\}_{i \in I^k, \omega \in \{1,...,s\}}$ be the optimal solution obtained, and $\pi_1^k,...,\pi_s^k$ be Lagrange multipliers for the NACs. 
					
					\item[(1.e)] Solve Problems \eqref{eq:LSw} and \eqref{eq:LS0}, and let the obtained solution be $(x_{\omega}^k$, $y_{\omega}^k)$, $x_0^k$. If $obj_{LS^k}=\sum_{\omega=1}^{s}obj_{{LS}_\omega^k}+obj_{{LS_0}^k}>LBD$, update $LBD=obj_{LS^k}$. Generate a Lagrangian cut and update $R^{k+1}=R^{k} \cup \{k\}$. Update $i=i+1$, $I^{k+1} =I^k \cup \{i\} $, $(y^{[i]}_1, \cdots, y^{[i]}_s)=(y^k_1, \cdots, y^k_s)$.
					
					\item[(1.f)] If $UBD \le LBD+\epsilon$, terminate and return the incumbent solution as an $\epsilon$-optimal solution. If $obj_{{LS}^{k}} \geq obj_{{LS}^{k-1}} + \epsilon$, $k=k+1$, go to step (1.a); otherwise $k=k+1$ and go to step (2.a);

				\end{enumerate}

				\STATE \underline{\textbf{GBD Iteration}} 
				\STATE
				\begin{enumerate}
					\setlength{\itemsep}{0pt} 
					
					\item[(2.a)] \textbf{Solve Problem \eqref{eq:JRMPR}, and then perform marginal based domain reduction \eqref{eq:MDR}. If $obj_{JRMPR^{(l)}} \ge LBD + \epsilon$, let the obtained solution be $(x_0^{(l)},y_{1}^{(l)},...,y_{s}^{(1)})$, update $LBD=obj_{JRMPR^{(l)}}$, $i=i+1$, $I^{k+1} =I^k \cup \{i\}$, $(y^{[i]}_1, \cdots, y^{[i]}_s)=(y^{(l)}_1, \cdots, y^{(l)}_s)$, go to step (2.c). Otherwise, go to set (2.b). } 
					
					\item[(2.b)] Solve Problem \eqref{eq:JRMP}, and let the obtained solution be $(x_0^{(l)},y_{1}^{(l)},...,y_{s}^{(1)})$. Update $i=i+1$, $I^{k+1} =I^k \cup \{i\}$, $(y^{[i]}_1, \cdots, y^{[i]}_s)=(y^{(l)}_1, \cdots, y^{(l)}_s)$. If $obj_{RMP^{(l)}}>LBD$, update $LBD=obj_{JRMP^{(l)}}$. 
					
					\item[(2.c)] Solve Problem \eqref{eq:BPP} by fixing $(x_0,y_1,\cdots,y_s)=(x_0^{(l)},y_{1}^{(l)},\cdots,y_{s}^{(l)})$. If \eqref{eq:BPP} is feasible for all $\omega$, generate Benders optimality cuts with the dual solution $\mu_\omega^k$ and $\lambda_\omega^k$, and update $T^{(l+1)}=T^{(l)} \cup \{l\}$. If $\sum_{\omega=1}^{s} obj_{BPP^{(l)}_{\omega}}<UBD$, update $UBD=obj_{BPP^{(l)}}$ and the incumbent solution $(x_0^*,x_{1}^*,\cdots, x_s^*, y_{1}^*, \cdots, y_s^*)=(x_0^{(l)},x_{1}^{(l)},\cdots, x_s^{(l)}, y_{1}^{(l)}), \cdots, y_{s}^{(l)})$.
					If Problem \eqref{eq:BPP} is infeasible for at least one $\omega$, solve Problem \eqref{eq:BFP}. Generate Benders feasibility cuts with the obtained dual solution $\mu_\omega^l$ and $\lambda_\omega^l$, and update $S^{(l+1)}=S^{(l)} \cup \{l\}$.
					
					\item[(2.d)]  If $UBD \le LBD+\epsilon$, terminate and return the incumbent solution as an $\epsilon$-optimal solution; otherwise $l=l+1$, go to step (1.a).

				\end{enumerate}

				
			\end{algorithmic}
			
		\end{minipage} 
		\\
		\\
		\hline
	\end{tabular}
	
\end{table}

\begin{theorem} \label{thm:JD2finite}
	The decomposition algorithm described in Table \ref{tab:JD2} terminates in a finite number of steps with an $\epsilon$-optimal solution of Problem \eqref{eq:P}, if one the following three conditions is satisfied: \\
	(a) Set $X_{\omega}$ is polyhedral $\forall \omega \in \{1, \cdots, s\}$. \\
	(b) Set $X_0 \times Y_1 \times \cdots \times Y_s$ is finite discrete. \\
	(c) There are only a finite number of GBD iterations at which the Benders primal problem BPP is infeasible.  
\end{theorem}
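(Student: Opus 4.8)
The plan is to reduce the claim to Theorem~\ref{thm:JDfinite} by checking that the two enhancements in Table~\ref{tab:JD2}---optimization-based domain reduction through Problem~\eqref{eq:ODR}, and the insertion of the convex relaxation Problem~\eqref{eq:JRMPR} together with the marginal-based update~\eqref{eq:MDR}---preserve both the validity of all bounds and the finiteness counting. First I would argue \emph{validity of the domain reductions}: by Propositions~\ref{prop:rangered1} and~\ref{prop:rangered2} each coordinate bound produced by~\eqref{eq:MDR} keeps some optimal solution of~\eqref{eq:P} feasible, and the same holds for the bounds produced by~\eqref{eq:ODR}, because that problem optimizes a single coordinate over a convex relaxation of the feasible set of~\eqref{eq:P} intersected only with constraints ($\sum_{\omega} c_\omega^{\textrm{T}} x_\omega \le UBD$, $\sum_{\omega} c_\omega^{\textrm{T}} x_\omega \ge LBD$, and the Lagrangian-cut-derived constraints~\eqref{eq:star}) that every optimal solution satisfies. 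Hence the running domains $X_0^k$ and $\hat{Y}_\omega^k$ are nested, stay inside the original compact sets $X_0$ and $Y_\omega$ (so they remain compact), and always contain the components of at least one optimal solution; in particular~\eqref{eq:P} restricted to these domains stays feasible.

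Next I would record that the bounds remain globally valid. Because an initial feasible point of~\eqref{eq:P} is supplied, $UBD < +\infty$ holds from the first LD iteration on, and since~\eqref{eq:JRMPR} is a convex relaxation of~\eqref{eq:JRMP}, which by Proposition~\ref{prop:JRMP} is a valid lower bounding problem for~\eqref{eq:P}, every value $obj_{\textrm{JRMPR}^{(l)}}$ and $obj_{\textrm{JRMP}^{(l)}}$ used to refresh $LBD$ is a true lower bound; thus $LBD \le obj_{\textrm{P}} < +\infty$ at every stage, and no cut generated by any subproblem ever excludes an optimal solution (the Benders and Lagrangian cuts by Proposition~\ref{prop:JRMP}, the domain-reduction bounds by the previous paragraph).

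Then I would count the iterations. The proof of Proposition~\ref{prop:LDfinite} carries over unchanged, since domain reduction keeps the Lagrangian subproblems bounded, so only finitely many LD iterations can occur between two consecutive GBD iterations. I would split the GBD iterations into two classes. A \emph{first-class} GBD iteration exits step~(2.a) with $obj_{\textrm{JRMPR}^{(l)}} \ge LBD + \epsilon$ and does not solve~\eqref{eq:JRMP}; each such iteration raises $LBD$ by at least $\epsilon$, and since $LBD$ is bounded above by $obj_{\textrm{P}}$, there can be only finitely many of them. Every \emph{second-class} GBD iteration solves~\eqref{eq:JRMP} in step~(2.b), and for these the reasoning of Propositions~\ref{prop:BDfinite}, \ref{prop:x0finite}, and~\ref{prop:GBDfinite} applies essentially verbatim: condition~(a) is unaffected ($X_\omega$ polyhedral), condition~(b) is preserved because domain reduction merely deletes points from the finite set $X_0 \times Y_1 \times \cdots \times Y_s$, and the convergent-subsequence argument of Proposition~\ref{prop:GBDfinite}---which relies on compactness of the domains and on the uniform boundedness of the BPP multipliers from Lemma~2.1 of~\cite{geoffrion1972gbd} under Assumption~\ref{ass:slater}---still goes through because the domains only shrink inside fixed compact sets. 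Hence under any one of conditions~(a), (b), (c) only finitely many second-class GBD iterations occur; combining with the finitely many first-class ones and the finitely many LD iterations between GBD iterations, the algorithm terminates after finitely many steps, and the termination test $UBD \le LBD + \epsilon$ together with the validity statements above certifies that the incumbent is an $\epsilon$-optimal solution of~\eqref{eq:P}.

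The main obstacle I anticipate is the treatment of the first-class GBD iterations and of the time-varying domains: one must check that skipping~\eqref{eq:JRMP} in step~(2.a) genuinely makes progress (it does---$LBD$ strictly increases there) and that the compactness and uniform-boundedness argument in Proposition~\ref{prop:GBDfinite}, originally stated for a fixed feasible region, is not invalidated by the fact that $X_0^k$ and $\hat{Y}_\omega^k$ change from iteration to iteration. The resolution is that the sequence of domains is nested and contained in the original compact set, so every limiting argument used in that proof remains available; once this is spelled out, the rest is a matter of assembling the finiteness pieces exactly as in the proof of Theorem~\ref{thm:JDfinite}.
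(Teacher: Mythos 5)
Your proposal is correct and follows essentially the same route as the paper's proof: it reduces the claim to Theorem~\ref{thm:JDfinite} by (i) observing that a GBD iteration which solves only \eqref{eq:JRMPR} raises $LBD$ by at least $\epsilon$ while $LBD$ stays bounded above, so only finitely many such iterations can occur, and (ii) noting that the domain reductions never exclude an optimal solution, so all bounding subproblems remain valid. Your write-up is in fact more careful than the paper's on two points the paper leaves implicit — the validity of the \eqref{eq:ODR} bounds and the robustness of the compactness/uniform-boundedness argument in Proposition~\ref{prop:GBDfinite} to the shrinking (but nested, compact) domains.
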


\begin{proof}
This can be proved by showing that, solving Problem \eqref{eq:JRMPR} in every GBD iteration in JD and including domain reduction calculations do not invalidate the finite termination to an $\epsilon$-optimal solution. 

First, we can show that there cannot be an infinite number of GBD iterations at which Problem \eqref{eq:JRMPR} is solved but Problem \eqref{eq:JRMP} is not solved. Consider a GBD iteration at which Problem \eqref{eq:JRMPR} is solved but Problem \eqref{eq:JRMP} is not solved, then Problem \eqref{eq:JRMPR} is not unbounded (because otherwise Problem \eqref{eq:JRMP} needs to be solved) and the lower bound $LBD$ is finite. The upper bound $UBD$ is also finite (because an initial feasible solution exists). Therefore, it is not possible that $LBD$ can be improved by $\epsilon>0$ for an infinite number of GBD iterations, so there cannot be an infinite number of GBD iterations at which Problem \eqref{eq:JRMPR} is solved but Problem \eqref{eq:JRMP} is not solved. According to the proof of Theorem \ref{thm:JDfinite}, JD can only include a finite number of LD iterations, and a finite number of GBD iterations at which Problem \eqref{eq:JRMP} is solved, if one of the three listed conditions are satisfied.  

Second, domain reduction reduces the ranges of $x_{0}$ and $y_{1},...,y_{s}$ but does not exclude any optimal solution from the reduced ranges. So the Lagrangian relaxation problems and JD relaxation master problems are still valid lower bounding problems and they cannot cut off any optimal solution. 
 
\end{proof}

\section{Case Studies} \label{sec:sec5}

The purpose of the case studies is to demonstrate the potential computational advantages of the proposed joint decomposition method for problems exhibiting the decomposable structure of \eqref{eq:P0}, especially when off-the-shelf solvers cannot effectively exploit the problem structure. We consider two case study problems here, which are both scenario-based two-stage stochastic nonconvex MINLPs arising from integrated design and operation under uncertainty. 

\subsection{Case study problems} \label{subsec:5.1}

\textbf{Case Study A} - This problem is a variant of the stochastic Haverly pooling problem \cite{li2010spp}, which was originally developed based on the classical Haverly pooling problem \cite{haverly1978sob} \cite{haverly1979bor}. Figure \ref{fig:SHaverly} shows the superstructure of the pooling system to be developed. The circles denote four sources that supply intermediate gasoline products with different sulfur percentages and costs, the ellipse denotes a blender (or called a pool) at which some intermediate products can be blended, and the rectangles denote product sinks at which the final products are blended. The goal of optimization is to minimize the negative profit of the system by determining: (1) Whether the pool and the two product sinks are to be developed in the system; (2) The capacities of the sources and the pipelines. The stochastic pooling model of the problem can be found in Appendix B. Two uncertain parameters, percentage of sulfur in source 4 and upper limit on the demand at sink 1, were considered. They were assumed to follow independent normal distributions, with means of 2.5 and 180 and standard deviations of 0.08 and 10, respectively. Other parameters used in the problem can be found in \cite{li2010spp}. For this problem, $x_0$ contains 3 binary variables and 13 continuous variables, $x_\omega$ contains $7s$ continuous variables and $y_\omega$ contains $14s$ continuous variables, where $s$ stands for the total number of scenarios. In the case study, each uncertain parameter was sampled for 5, 6, 7, 8, 9 and 10 scenario values, via the sampling rule described in \cite{li2010spp}, and this led to problem instances with 25, 36, 49, 64, 81 and 100 scenarios.

\begin{figure}[tb]
	\begin{center}
		\centering	
		\includegraphics[width=0.65 \textwidth]{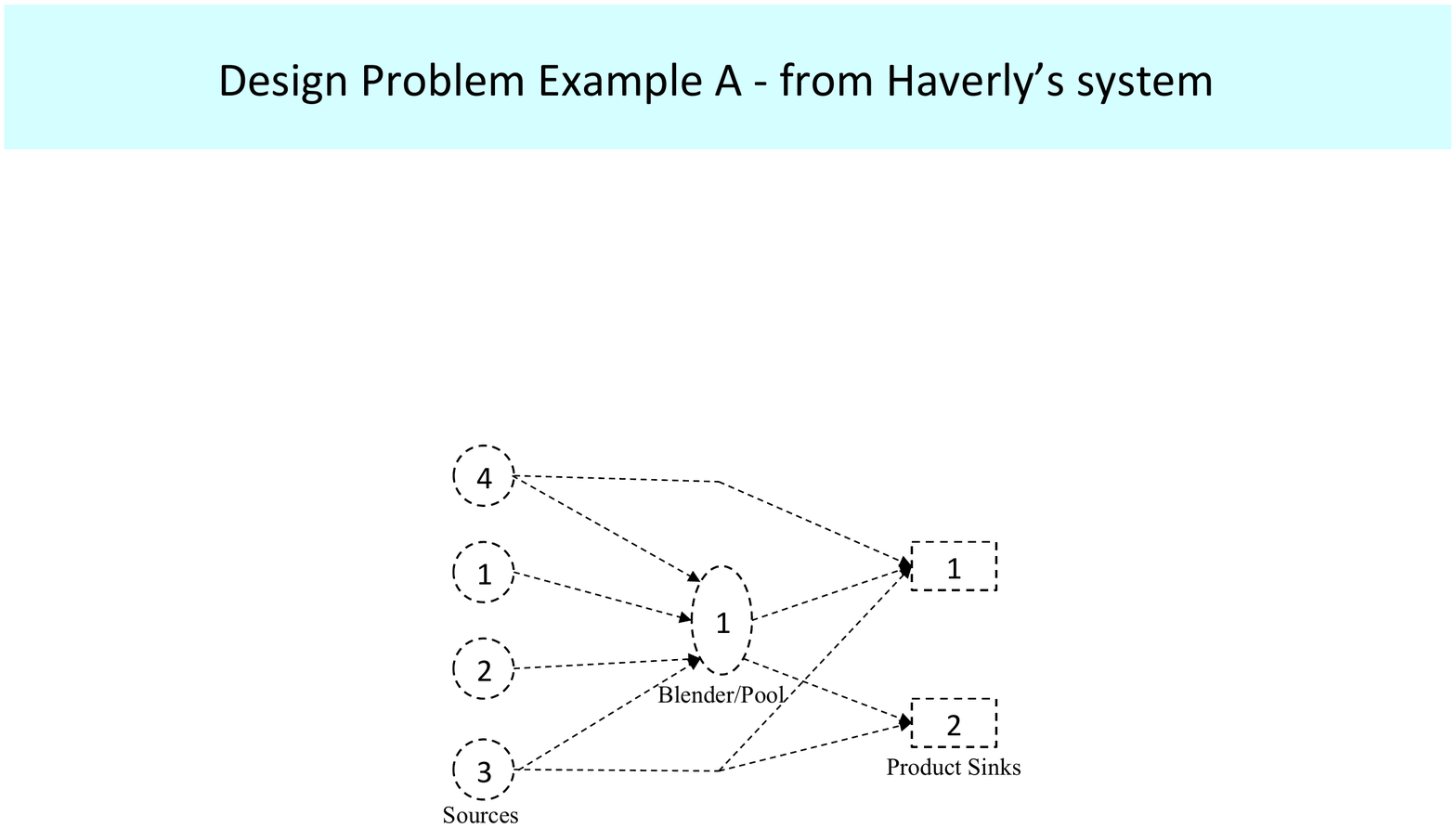}
		\caption{Superstructure of case study A problem}      
		\label{fig:SHaverly}	
	\end{center}	
\end{figure}

\textbf{Case Study B} - This problem is a variant of the Sarawak Gas Production System (SGPS) design problem \cite{selot2008sop}, and the original form of the design problem appeared in \cite{li2010spp}. Figure \ref{fig:SGPS} shows the superstructure of the SGPS system under consideration, where the circles represent gas fields (sources), ellipses represent offshore gas platforms (pools) at which gas flows from different gas fields are mixed and split, rectangles represent onshore liquefied natural gas (LNG) plants (product terminals). Symbols with solid lines represent the part of the system that is already developed, and symbols with dashed lines represent the superstructure of the part of the system that needs to be designed in the problem. The goal of optimization is to maximize expected net present value while satisfying specifications for gas qualities at the LNG plants in the presence of uncertainty. There are two uncertain parameters, i.e., the quality of CO$_2$ at gas field M1 and upper limit on the demand at LNG plant 2. They were assumed to follow independent normal distributions with means of 3.34\% and 2155 Mmol/day and standard deviations of 1\% and 172.5 Mmol/day, respectively. In the case study, each uncertain parameter was sampled for 5, 6, 7, 8, 9 and 10 scenario values, via the same sampling rule described in \cite{li2010spp}, which led to problem instances with 25, 36, 49, 64, 81 and 100 scenarios. The problem was also formulated following the new stochastic pooling model provided in Appendix B. In the resulting formulation, $x_0$ contains 5 binary variables and 29 continuous variables. The 5 binary variables are to determine whether gas fields HL, SE, M3, M1 and JN are to be developed, and the 29 continuous variables are the capacities of other units to be developed. $x_\omega$ contains $8s$ variables and $y_\omega$ contains $85s$ variables, where $s$ stands for the total number of scenarios.

\begin{figure}[tb]
	\begin{center}
		\centering	
		\includegraphics[width=0.9 \textwidth]{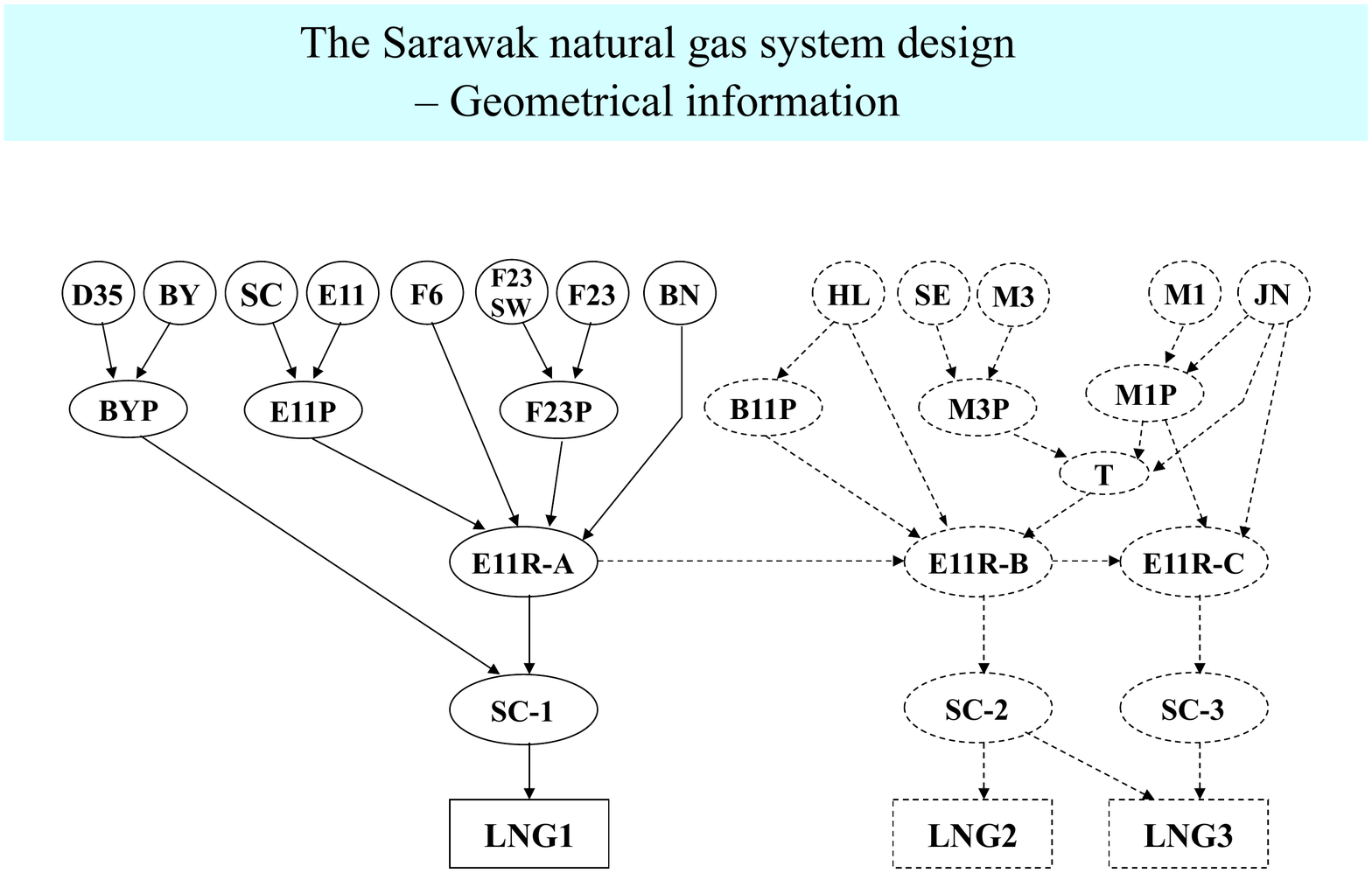}
		\caption{Superstructure of case study B problem}      
		\label{fig:SGPS}	
	\end{center}	
\end{figure}

\subsection{Solution approaches and implementation}
The case studies were run on a virtual machine allocated with a 3.2GHz CPU. The virtual machine ran Linux operating system (Ubuntu 16.04) with 6 GB of memory. Three solution approaches were compared in the case studies: Monolith, JD1, JD2. Monolith refers to solving the problem using an off-the-shelf, general-purpose global optimization solver, JD1 refers to the basic JD algorithm, and JD2 refers to the enhanced JD algorithm. The case study problems and the subproblems required in JD1 and JD2 were all modeled on GAMS 24.7.4 \cite{Brook1988}, but JD1 and JD2 algorithms were programmed on MATLAB 2014a \cite{MATLAB2014a}. Data exchange between MATLAB and GAMS was realized via GAMS GDXMRW facility \cite{gdxmrw2011}. 

The monolith approach solved the problems using three global optimization solvers, i.e.,  ANTIGONE 1.1 \cite{misener-floudas:ANTIGONE:2014}, BARON 16.8 \cite{tawarmalani2004goo}, SCIP 3.2 \cite{Achterberg2009}. ANTIGONE 1.1 and BARON 16.8 adopted CONOPT 3 \cite{Drud1994} as its NLP solver and  CPLEX 12.6 \cite{ibmiic} as its LP/MILP solver, and SCIP 3.2 used its default solvers for the subproblems. JD1 and JD2 solved the problems by using CPLEX 12.6 for the LP/MILP subproblems and SCIP 3.2 for the nonconvex NLP/MINLP subproblems.

In JD2, the construction of Problems \eqref{eq:ODR} and \eqref{eq:JRMPR} require the convex relaxation of nonconvex sets $X_0$ and $Y_\omega$. In the case studies, $X_0$ was a mixed integer set defined by linear constraints, and it was relaxed into a polyhedral set via continuous relaxation. $Y_\omega$ was a nonconvex continuous set defined with bilinear functions, and it was relaxed into a polyhedral set via standard McCormick relaxation \cite{mccormick1976cog}. The relative and absolute termination tolerances for Case Study A were set to $10^{-3}$, and for Case study B were set to $10^{-2}$. JD1 and JD2 started with all design decisions being 0. 

During the execution of JD1 and JD2, large computing overhead may be incurred due to frequent model generation in GAMS and data exchange between GAMS and MATLAB. So both "Total solver time" and "Total run time" were recorded for the simulation studies, which refer to the total time for the subproblem solvers to solve each individual subproblem and the wall time for the entire solution procedure, respectively. The computing overhead could have be significantly reduced if JD1 and JD2 had been implemented using general-purpose programming languages, such as C++. For the monolith approach, the computing overhead was much less, as seen from the results in the next subsection.

\subsection{Results and discussion} \label{sec:sec6}

Summary of the results for case study A is presented on Tables \ref{tab:Table-1}, \ref{tab:Table-2}, \ref{tab:Table-3}. Table \ref{tab:Table-1} shows the results for the monolith approach using the three global optimization solvers. It can be seen that ANTIGONE was the fastest among the three solvers, but its solution time increased quickly with the problem size. BARON could also solve small problem instances quickly, but it could not find the desired $10^{-3}$-optimal solution (i.e., a solution with a relative gap no larger than $0.1\%$) for larger problem instances within the one hour run time limit. SCIP was the slowest of the three solvers; but unlike BARON, it happened to find the $10^{-3}$-optimal solution within one hour for all problem instances (but could not verify the optimality for large problem instances). On the other hand, Tables \ref{tab:Table-2}, \ref{tab:Table-3} show that both JD1 and JD2 could solve all problem instances fairly quickly. JD1 was not as fast as ANTIGONE or BARON for small problem instances, but its solution time increased more slowly than that of ANTIGONE or BARON. This was primarily because the number of JD1 iterations did not vary much with the number of scenarios. The nonconvex relaxed master problem \eqref{eq:JRMP} was the major contributor to JD1 solution time, and sometimes it dominated the solution time (as in the 64 scenario case). In JD2 where the relaxation of \eqref{eq:JRMP} (i.e., \eqref{eq:JRMPR}) is solved, the number of \eqref{eq:JRMP} needed to be solved was significantly reduced, and each \eqref{eq:JRMP} was much easier to solve due to extensive domain reduction. The price for reducing the \eqref{eq:JRMP} solution time was the time spent on optimization based domain reduction ODR$^k$, but the resulting total solution time still decreased for most cases, so JD2 generally outperformed JD1 and it scaled with the number of scenarios in a more consistent way. Note that Tables \ref{tab:Table-2} and \ref{tab:Table-3} do not include the times to solve easy LP and MILP subproblems like Problem \eqref{eq:BPP}, \eqref{eq:BFP}, \eqref{eq:LS0} and \eqref{eq:JRMPR}, because those times were very small compared to the total solution time.

\begin{table} [tb!] \small 
	
	\caption {Results for case study A - Monolith (Unit for time: seconds)}
	\label{tab:Table-1}     
	\centering

	\begin{tabular} { p{3.0cm} c  c  c  c  c  c} 
		
		\hline
		Number of scenarios       & 25 & 36 & 49  & 64 & 81 & 100  \\
		
		\hline
		ANTIGONE 1.1 & & & & & & \\
		Objective val. (\$)  	& -532.1 & -530.6	& -531.2 & -531.5 & -531.1	&-531.1  	\\ 

		Relative gap          & $\leq$0.1\% & $\leq$0.1\%     & $\leq$0.1\% & $\leq$0.1\% & $\leq$0.1\%             & $\leq$0.1\% \\		

		Total solver time   	& 12	&30& 95 & 242&  548	& 1470\\     
		
		Total run time   	& 13	&35& 112 & 284&  645	& 1703 \\  
		
		\hline
		
		BARON 16.8		& & & & & & \\
		Objective val. (\$)  	& -532.1 & -530.6	& -233.17 & -397.7&  -163.2	& -427.8 \\
		
		Relative gap                & $\leq$0.1\% & $\leq$0.1\%         & 63.6\% & 25.5\% & 69.6\%               & 22.2\% \\
		
		Total solver time   	& 18	&30& --$\dag$  & --$\dag$&  --  $\dag$	& -- $\dag$ \\
		
		Total run time   	& 20	&37& --$\dag$ & --$\dag$&  -- $\dag$	& -- $\dag$ \\
		
		\hline
		
		SCIP 3.2	& & & & & & \\
		Objective val. (\$)  	& -532.1 & -530.6	& -531.2 & -531.5 & -531.1	&-531.1  	\\ 
		
		Relative gap         & $\leq$0.1\% & $\leq$0.1\%     & 0.58\% & 2\% & 3.7\%             & 0.13\% \\
		
		Total solver time   	& 134	&1226& -- $\ddag$& -- $\ddag$&  -- $\ddag$	& -- $\ddag$ \\
		
		Total run time   	& 163	&1470& --$\ddag$ & -- $\ddag$&  -- $\ddag$	& -- $\ddag$ \\
		\hline                                          			
	\end{tabular}
	\\
	
	\vspace{0.1cm}
	\raggedright  \footnotesize{$\dag$ Solver terminated after the one hour time limit,  without finding the optimal solution.}	\\	
	\raggedright  \footnotesize{$\ddag$ Solver obtained the optimal solution after the one hour time limit, but did not reduce the gap to the set tolerance ($10^{-3}$).}

	\vspace{0.5cm}
	
	\caption{Results for case study A - JD1 (Unit for time: seconds)}
	\label{tab:Table-2}     
	\centering
	\setlength{\extrarowheight}{3pt}
	
	\begin{tabular} { p{4cm}  c   c  c  c  c  c } 
		
			\hline
Number of scenarios     &25  & 36  & 49 & 64 & 81 & 100    \\

\hline
Optimal obj. (\$) & -532.1	& -530.6 &	-531.2 & -531.5 & -531.1 & -531.1	\\ 
Relative gap          & $\leq$0.1\% & $\leq$0.1\%     & $\leq$0.1\% & $\leq$0.1\% & $\leq$0.1\%             & $\leq$0.1\% \\		

Num. of iterations  & 8 &13 &10 &14 & 10 & 12 	  \\
Num. of JRMP$^{(l)}$ solved  & 4	& 5	& 5 & 7 & 5 & 6	 \\
Time for JRMP$^{(l)}$ &6  &8	&11 & 519  & 122  &202 \\

Time for LS$^{k}_\omega$    & 49 &128   &108  & 188  & 179   &262 \\
Time for PP$^k$       &7  &25   &18  & 124  & 45   &66 \\
Total solver time     &63 &168  &141 & 840 & 352  &540 \\
Total run time        &139 & 479 &318 & 1223 & 677 &1020 \\    
\hline

\end{tabular}

\vspace{0.5cm}

\caption{Results for case study A - JD2 (Unit for time: seconds)}
\label{tab:Table-3}     
\centering
\setlength{\extrarowheight}{3pt}

\begin{tabular} { p{4cm}  c  c  c  c  c  c} 
\hline
Number of scenarios     & 25 &36 & 49& 64 & 81 &  100   \\

\hline

Objective val. (\$) &-532.1   &-530.5   & -531.2	& -531.5 &	-530.7 & -530.7 	\\ 

Relative gap          & $\leq$0.1\% & $\leq$0.1\%     & $\leq$0.1\% & $\leq$0.1\% & $\leq$0.1\%             & $\leq$0.1\% \\		

Num. of iterations  &	10&	10&	10 & 8 & 10 & 10\\

Num. of JRMPR$^{(l)}$ solved  & 7	&5	&6 & 4 &4 & 5	\\

Num. of JRMP$^{(l)}$ solved & 3	&1	&3 & 1 & 1 & 2 \\
Time for JRMP$^{(l)}$ &1    &2	  &2     & 2   & 3    & 5 \\

Time for LS$^{k}_\omega$ &51	&71   &103    &110   & 165    & 190\\
Time for PP$^{k}$     &10    &22    &24    & 47    & 66    & 37 \\
Time for ODR$^k$       &35 	&44	  &55    & 61  & 104 & 140\\
Total solver time  	  &100 	&142 &192   & 283   & 345  & 391\\
Total run time 	   	  &210  &308  &406  & 549   & 739   & 968 \\                             
\hline
\end{tabular}

\end{table}

Tables \ref{tab:Table-4} and \ref{tab:Table-5} present the results for case study B. ANTIGONE actually found the desired $10^{-2}$-optimal solution, but it cannot reduce the gap to $1\%$ within the 24 hour run time limit; for the 25-scenario instance, it mistakenly terminated before the run time limit without reducing the gap to $1\%$. BARON had the similar problem; it obtained the $10^{-2}$-optimal solution for most problem instances but could not reduce the gap to $1\%$ for any problem instance. SCIP performed better than ANTIGONE and BARON for case study B, but it could only solve the 25 scenario and 36 scenario problem instances successfully. JD1 could not solve large problem instances either, because the \ref{eq:JRMP} subproblems took too much time to solve and the solution procedure could not terminate with the time limit. Table \ref{tab:Table-5} shows that JD2 solved the all problem instances successfully, and its solution time scaled well with the number of scenarios. This is because the total number of JD2 iterations did not vary significantly with the number of scenarios, and the times for \ref{eq:JRMP} and domain reduction did not increase greatly with the number of scenarios. It can be seen that for this problem, domain reduction, primarily \eqref{eq:ODR}, dominated the total solution time, so a more efficient way to perform domain reduction could have been able to effectively reduce the solution time. This case study problem indicates that, general-purpose global optimization solvers may not be able to effectively exploit the structure of a complex nonconvex MINLP and solve the problem efficiently enough, and this is when one might consider the use of a tailored decomposition strategy like the one proposed in this paper.

\begin{table} [tb!] \small 
	
	\caption {Results for case study B - Monolith (Unit for time: sec)}
	\label{tab:Table-4}     
	\centering

	\begin{tabular} { p{3.5cm}   c  c  c  c  c  c } 
		\hline
		Number of scenarios       &  25 & 36  & 49 & 64 & 81 & 100\\			
		\hline
		ANTIGONE 1.1 & & & & & & \\
		Objective val. (Billion \$) & -33.87& -33.67 &-33.81 & -33.76 & -33.78 & -33.79\\ 
		Relative gap.   	& 1.4\%& 2.1\% & 1.7\%& 1.8\% & 1.8\%& 1.7\% \\ 
		Total solver time   	& 51465$\dag$& -- $^\ddag$ & -- $\ddag$& --$\ddag$ & -- $\ddag$  & -- $\ddag$ \\
		Total run time   	&58522$\dag$ & --$\ddag$ & -- $\ddag$ &-- $\ddag$ & -- $\ddag$ & -- $\ddag$ \\
		
		\hline
		BARON 16.8 & & & & & & \\
		Objective val. (Billion \$) & -33.87& -33.91 &-33.90 & -33.31 & -33.91 & -33.79\\ 
		Relative gap.   	& 1.4\%& 1.3\% & 1.3\%& 3.6\% & 1.3\% & 1.6\% \\ 
		Total solver time   	&40530  $\dag$&59965 $\dag$ & 58460$\dag$ & -- $\ddag$ & --$\ddag$  & --$\ddag$  \\
		Total run time   	&68060  $\dag$&69520 $\dag$ & 70196 $\dag$ & -- $\ddag$ & -- $\ddag$ & -- $\ddag$ \\
		\hline
		SCIP 3.2  & & & & & & \\
		Objective val. (Billion \$) & -33.92& -33.91 &-33.81 & -33.76 & -33.78 & -33.77\\ 
		Relative gap. 	&$\leq$1\% & $\leq$1\% & 1.52\% & 1.69\% & 1.69\% & 1.75\% \\ 
		Total solver time 	&54337 &11952  & { --$\ddag$}  &{--$\ddag$}  & { --$\ddag$} & { --$\ddag$}  \\
		Total run time  	&61365 &13316 & { --$\ddag$}  &{--$\ddag$}  & { --$\ddag$}  & { --$\ddag$}  \\
		
		\hline                                              			
	\end{tabular}
	\vspace{0.1cm}
	
	\raggedright  \footnotesize{$\dag$ Solver terminated with a nonzero exit code within 24 hours, and the relative gap was larger than the set tolerance ($10^{-2}$).} \\
	\raggedright  \footnotesize{$^\ddag$ Solver terminated after the 24 hour time limit, with a relative gap larger than the set tolerance ($10^{-2}$).} \\	
	\vspace{0.5cm}
	
	\caption{Results for case study B - JD2 (Unit for time: sec)}
	\label{tab:Table-5}     
	\centering
	\setlength{\extrarowheight}{3pt}
	
	\begin{tabular} { p{4.2cm}  c   c  c  c  c  c  } 
		
		\hline
		Number of scenarios     & 25& 36 & 49 & 64 & 81 & 100  \\
		
		\hline
				
		Objective val. (Billion \$)&-33.58 & -33.57 &-33.77 &-33.71 &-33.57 &-33.55\\
		
		Relative gap          & $\leq$1\% & $\leq$1\%     & $\leq$1\% & $\leq$1\% & $\leq$1\%             & $\leq$1\% \\		
		
		Num. of iterations  &27 	&24    &30 &25 &23 &23   \\
		Num. of JRMPR$^{(l)}$ solved  &21  &17 &23 &17 &16 &14    \\
		Num. of JRMP$^{(l)}$ solved & 17   & 10 &15 &7 &8 &6 \\
		Time for JRMP$^{(l)}$ & 948 &696	&3547 &1617 &3948 &5651 \\

Time for LS$^k_\omega$    & 5676	&3820	& 14279 & 2734 & 2188 &2814   \\
		Time for PP$^k$   &155	&443 &560 &509 &388 &1000 \\

		Time for ODR$^k$ &7203	&9247	&19020 &22661 &21137 &30961 \\
		Total solver time  	&14028	&14288	& 37832 &27702 &27893 & 40769 \\
		Total run time 	   &16431	&16482	&44525 & 32150 &33271 & 47483 \\                             
		\hline
		
	\end{tabular}

\end{table}



\section{Concluding Remarks}
Two joint decomposition methods, JD1, and JD2, are developed in this paper for efficient global optimization of Problem \eqref{eq:P}. JD1 is a basic joint decomposition approach, which follows the notions of classical decomposition methods as well as convex relaxation, in order to solve \eqref{eq:P} via solving a sequence of relatively easy subproblems. JD2 is an enhanced version of JD1 that integrates several domain reduction techniques. It has been proved that both methods can terminate in a finite number of iterations with an $\epsilon$-optimal solution if some mild conditions are satisfied. 

We considered two case study problems that come from integrated design and operation under uncertainty, in order to demonstrate the potential computational advantages of joint decomposition. For the first problem which is smaller and easier, both JD1 and JD2 outperformed state-of-the-art global solvers when the number of scenarios was large, and JD2 generally outperformed JD1. For the second problem which was larger and more difficult, JD2 outperformed state-of-the-art global solvers and JD1 (which could not close the gap for most cases). The case study results indicate that, when joint decomposition can effectively exploit the problem structure, the total number of iterations it requires does not increase significantly with the number of scenarios, and consequently the solution time increases slowly with the problem size compared to the general-purpose global optimization solvers. On the other hand, like all decomposition methods, joint decomposition uses existing solvers to solve its subproblems, so its computational performance does rely on the advances in general-purpose local and global optimization solvers.

In this paper, we only consider domain reduction for the linking variables in $x_0$. In the future, we will also consider domain reduction for some key non-linking complicating variables in $y_\omega$ that influence the convergence rate the most, and investigate how to find out these key variables. This can effectively tighten the convex relaxation of Problem \eqref{eq:JRMP}, and therefore reduce the number of \ref{eq:JRMP} to be solved and accelerate the solution of each \ref{eq:JRMP}.

\section*{Acknowledgement}
The authors are grateful to the discovery grant (RGPIN 418411-13) and the collaborative research and development grant (CRDPJ 485798-15) from Natural Sciences and Engineering Research Council of Canada (NSERC).

\setcounter{equation}{0}
\renewcommand\theequation{A.\arabic{equation}}

\section*{Appendix A: Reformulation from \eqref{eq:P0} to \eqref{eq:P}}
 From Problem \eqref{eq:P0}, We first separate the convex part and the nonconvex part of the problem. Specifically, let  $v_{\omega}=(v_{c,\omega},v_{nc,\omega})$, where $v_{c,\omega}$ includes variables that are only involved in convex functions and restricted by convex constraints/sets, and $v_{nc,\omega}$ includes the variables that are involved in a nonconvex function and/or restricted by a nonconvex constraint/set. In addition, we introduce duplicate variables $v_{0,1},...,v_{0,s}$ for variable $x_0$, to express the relation among all scenarios using NACs. We then rewrite Problem \eqref{eq:P0} as: 
\begin{equation} \label{eq:P1} 
\begin{split} 
& \min_{\substack{x_0, v_{0,1},...,v_{0,s} \\ v_{c,1},...,v_{c,s} \\ v_{nc,1},...,v_{nc,s}}} \;\sum_{\omega=1}^s[f_{0,\omega}(v_{0,\omega}) +f_{c,\omega}(v_{c,\omega}) + f_{nc,\omega}(v_{nc,\omega})]\\ 
& \textrm{s.t.}  \;\;\;  x_{0}  = v_{0,\omega}, \quad \forall \omega \in \{1,...,s\}, \\   
& \quad \quad g_{0,\omega}(v_{0,\omega}) + g_{c,\omega}(v_{c,\omega}) + g_{nc,\omega}(v_{nc,\omega}) \le 0, \quad \forall \omega \in \{1,...,s\}, \\   
& \quad \quad  x_{0} \in {X_0}, \\
& \quad \quad v_{0,\omega} \in \hat{X}_{0}, \; v_{c,\omega} \in V_{c,\omega}, \; {v_{nc,\omega}} \in {V_{nc,\omega}}, \quad \forall \omega \in \{1,...,s\}. \\  
\end{split}
\end{equation}
In the above formulation, set $X_0 \subset \mathbb{R}^{n_0}$ is either convex or nonconvex, set $V_{c,\omega} \subset \mathbb{R}^{n_c}$ is convex, set $V_{nc,\omega} \subset \mathbb{R}^{n_{nc}}$ is either convex or nonconvex. Functions $f_{c,\omega}: V_{c,\omega} \rightarrow \mathbb{R}$ and $g_{c,\omega}: V_{c,\omega} \rightarrow \mathbb{R}^{m_{c}}$ are convex. Functions $f_{nc,\omega}: V_{nc,\omega} \rightarrow \mathbb{R}$, $g_{nc,\omega}: V_{nc,\omega} \rightarrow \mathbb{R}^{m_{nc}}$, $f_{0,\omega}$, and $g_{0,\omega}$ are either convex or nonconvex. Set $\hat{X}_0 \in \mathbb{R}^{n_0}$ is a convex relaxation of $X_0$ (and it is same to $X_0$ if $X_0$ is convex). The restriction $z_{0,\omega} \in \hat{X}_{0}$ is actually redundant with the presence of NACs; however, it tightens the problem when the NACs are dualized. Note that in order to generate a convex relaxation of $X_0$, extra variables may be introduced \cite{gatzke2002coc}, so the dimension of the relaxation may be larger than that of $X_0$. Here $\hat{X}_0$ can be understood as the projection of the relaxation set on the $\mathbb{R}^{n_0}$ space. For simplicity of notation, in this paper we always express a convex relaxation (of a set or a function) on the original variable space and do not explicitly show the extra variables needed for constructing the relaxation.

Define new variables $t_\omega$, $\alpha_{c,\omega}$, $\alpha_{nc,\omega}$, $\beta_{c,\omega}$, $\beta_{nc,\omega}$, such that Problem \eqref{eq:P1} can be written as:
\begin{equation} 
\begin{split} 
& \min \;\sum_{\omega=1}^s t_\omega \\ 
& \textrm{s.t.}  \;\;\;  x_{0}  = v_{0,\omega}, \quad \forall \omega \in \{1,...,s\}, \\   
& \quad \quad \beta_{c,\omega} + \beta_{nc,\omega} \le 0, \quad \forall \omega \in \{1,...,s\}, \\ 
& \qquad t_\omega \ge \alpha_{c,\omega}+\alpha_{nc,\omega}, \quad \forall \omega \in \{1,...,s\}, \\
& \qquad \alpha_{c,\omega} \ge f_{c,\omega}(v_{c,\omega}), \quad \forall \omega \in \{1,...,s\}, \\
& \qquad \alpha_{nc,\omega} \ge f_{0,\omega}(v_{0,\omega}) + f_{nc,\omega}(v_{nc,\omega}), \quad \forall \omega \in \{1,...,s\}, \\
& \qquad \beta_{c,\omega} \ge g_{0,\omega}(v_{0,\omega}) + g_{c,\omega}(v_{c,\omega}), \quad \forall \omega \in \{1,...,s\}, \\
& \qquad \beta_{nc,\omega} \ge g_{nc,\omega}(v_{nc,\omega}), \quad \forall \omega \in \{1,...,s\}, \\
& \quad \quad  x_{0} \in {X_0}, \\
& \quad \quad v_{0,\omega} \in \hat{X}_{0}, \;  v_{c,\omega} \in V_{c,\omega}, \; {v_{nc,\omega}} \in {V_{nc,\omega}}, \quad \forall \omega \in \{1,...,s\}. \\  
\end{split}
\end{equation}
Define $x_\omega=(v_{0,\omega}, v_{c,\omega}, t_\omega, \alpha_{c,\omega}, \beta_{c,\omega})$, $y_\omega=(v_{nc,\omega}, \alpha_{nc,\omega}, \beta_{nc,\omega})$, then the above formulation can be written as the following Problem \eqref{eq:P}:
\begin{equation} \tag{P}
\begin{split} 
& \min \;\sum_{\omega=1}^s c_\omega^Tx_{\omega}\\ 
& \textrm{s.t.}  \quad  \; \; x_{0}  = H_{\omega}x_{\omega}, \quad \forall \omega \in \{1,...,s\}, \\   
& \quad \quad \; \;\; A_{\omega}x_{\omega} + B_{\omega}y_{\omega} \le 0, \quad \forall \omega \in \{1,...,s\}, \\   
& \quad \quad \;\;\; x_{0} \in {X_0}, \\
& \quad \quad \;\;\; x_{\omega} \in X_{\omega}, \; {y_{\omega}} \in {Y_{\omega}}, \quad \forall \omega \in \{1,...,s\}, \\ 
\end{split}
\end{equation}
where the matrices 
\[c_\omega=\left[\begin{array}{c}
 0 \\ 0 \\ I \\ 0 \\ 0 \\
 \end{array} \right], \quad 
H_\omega=\left[I \;\; 0 \;\; 0 \;\; 0 \;\; 0 \right], \quad 
A_\omega=\left[
\begin{array} {ccccc}
0 & 0 & 0 & 0 & I \\ 0 & 0 & -I & I & 0 \\
\end{array}
\right], \quad 
B_\omega=\left[
\begin{array} {ccc}
0 & 0 & I \\ 0 & I & 0 \\
\end{array}
\right], \quad 
 \]
and the sets
\begin{align*}
X_\omega= & \{(v_{0,\omega}, v_{c,\omega}, t_\omega, \alpha_{c,\omega}, \beta_{c,\omega}) : v_{0,\omega} \in \hat{X}_{0}, \; v_{c,\omega} \in V_{c,\omega}, \\
& \quad \alpha_{c,\omega} \ge f_{c,\omega}(v_{c,\omega}), \;\; \beta_{c,\omega} \ge g_{0,\omega}(v_{0,\omega}) + g_{c,\omega}(v_{c,\omega})\}, \\
Y_\omega= & \{(v_{nc,\omega}, \alpha_{nc,\omega}, \beta_{nc,\omega}) : {v_{nc,\omega}} \in {V_{nc,\omega}}, \; \alpha_{nc,\omega} \ge f_{0,\omega}(v_{0,\omega}) +f_{nc,\omega}(v_{nc,\omega}), \\
& \quad \beta_{nc,\omega} \ge g_{nc,\omega}(v_{nc,\omega}) \}.
\end{align*}
The "0" and "I" in the matrices represent zero and identity matrices, and their dimensions are conformable to 
the relevant variables. According to the convexity/nonconvexity of the functions and the sets stated before, set $x-\omega$ is convex and set $y_\omega$ is nonconvex.

\setcounter{equation}{0}
\renewcommand\theequation{B.\arabic{equation}}

\section*{Appendix B: The stochastic pooling problem with mixed-integer first-stage decisions}
The two-stage stochastic pooling problem from Li et al. \cite{li2010spp} is modified here to address continuous design (first-stage) decisions. The nomenclature used in \cite{li2010spp} is adopted to describe the model, in which the scenarios are indexed by $h$ (rather than $\omega$). 

In the modified model, the design decisions on sources, pools, product terminals, denoted by $y_i^{\text{\tiny{S}}}$, $y_j^{\text{\tiny{P}}}$, $y_k^{\text{\tiny{T}}}$, can be continuous, integer, or mixed integer. If $y_i^S \in \{0,1\}$, then the design decision is to determine whether source $i$ is to be developed, and the related parameter $Z_{i}^{\text{\tiny{UB}}}$ represents the fixed capacity of the source. If $y_i^S$ is continuous and $y_i^{\text{\tiny{S}}} \in [0,1]$, then it is a capacity design decision, specifically it represents the ratio of source $i$ capacity to the maximum allowed capacity of the source (denoted by $Z_{i}^{\text{\tiny{UB}}}$). The design decisions on the pipelines among sources, pools, and terminals are all continuous, denoted by $y_{i,j}^{\text{\tiny{SP}}}$, $y_{i,k}^{\text{\tiny{ST}}}$, $y_{j,j^-}^{\text{\tiny{PP}}}$, $y_{j,k}^{\text{\tiny{PT}}} \in [0,1]$. They represents the ratios of the pipeline capacities to the maximum allowed capacities (denoted by $F^{\text{\tiny{SP,UB}}}_{i,j}$, $F^{\text{\tiny{ST,UB}}}_{i,k}$, $F^{\text{\tiny{PP,UB}}}_{j,j^-}$, $F^{\text{\tiny{PT,UB}}}_{j,k}$). 

All design and operational decision variables are nonnegative, and we do not impose other lower bounds on these variables in order to simplify discussion. The new stochastic pooling model consists primarily of three submodels, for the sources, pools, and product terminals, respectively. 

\subsection{Model for the sources}
The following group of constraints \eqref{eq:source} represents the submodel for the sources. Eq. (B.1a-B.1c) are same to Eq. (12-14) in \cite{li2010spp}, except that the lower flow bounds are not imposed. Eq. (B.1d-B.1f) are developed in place of the topology constraints Eq. (15-16) (which are invalid for continuous design decisions). Eq. (B.1d-B.1e) limit the capacity of a pipeline by the capacity of the source it connects. If $y_i^{\text{\tiny{S}}}=0$, then there cannot exist a pipeline connecting it, in other words, the capacity of a pipeline connecting it has to be zero. Eq. (B.1f) requires that the total capacity of all pipelines connecting to a source should be no less than the capacity of the source. This is to ensure enough pipeline capacity to move all materials generated in the source to other parts of the system in real-time.

 \begin{subequations} \label{eq:source}
 \begin{align}
 &  \sum_{j \in \Theta^{\text{\tiny{SP}}}_i} f^{\text{\tiny{SP}}}_{i,j,h} 
 + \sum_{k \in \Theta^{\text{\tiny{ST}}}_i} f^{\text{\tiny{ST}}}_{i,k,h} \leq y^{\text{\tiny{S}}}_i Z_{i}^{\text{\tiny{UB}}}, \\
& f^{\text{\tiny{SP}}}_{i,j,h}
\leq y^{\text{\tiny{SP}}}_{i,j} F^{\text{\tiny{SP,UB}}}_{i,j}, \\
& f^{\text{\tiny{ST}}}_{i,k,h}
\leq y^{\text{\tiny{ST}}}_{i,k} F^{\text{\tiny{ST,UB}}}_{i,k}, \\
& y_{i,j}^{\text{\tiny{SP}}} F_{i,j}^{\text{\tiny{SP,UB}}} \le y_i^{\text{\tiny{S}}} Z_{i}^{\text{\tiny{UB}}} ,\\
&  y_{i,k}^{\text{\tiny{ST}}} F_{i,k}^{\text{\tiny{ST,UB}}} \le y_i^{\text{\tiny{S}}} Z_{i}^{\text{\tiny{UB}}}, \\
& y_i^{\text{\tiny{S}}} Z_{i}^{\text{\tiny{UB}}}  \le \sum_{j \in \Theta_i^{\text{\tiny{SP}}}} y_{i,j}^{\text{\tiny{SP}}} F_{i,j}^{\text{\tiny{SP,UB}}} + \sum_{k \in \Theta_i^{\text{\tiny{ST}}}}
y_{i,k}^{\text{\tiny{ST}}} F_{i,k}^{\text{\tiny{ST,UB}}}, \\
& \forall i \in \{1,...,n\} , \; \forall j \in \Theta^{\text{\tiny{SP}}}_i, \; \forall k \in \Theta^{\text{\tiny{ST}}}_i , \; \forall h \in \{1,...,b\}. \nonumber 
 \end{align}
 \end{subequations}

\subsection{Model for the pools}
The following group of constraints \eqref{eq:pool} represents the submodel for the pools. Eq. (B.2a-B.1e) are same to Eq. (17-21) in \cite{li2010spp}, except that the lower flow bounds are not imposed. Eq. (B.2f-B.2k) are developed in place of the topology constraints (23-26) in \cite{li2010spp}. The interpretation of Eq. (B.2f-B.2k) is similar to that of Eq. (B.1d-B.1f) and therefore omitted. 

 \begin{subequations} \label{eq:pool}  
 \begin{align}
& f^{\text{\tiny{PT}}}_{j,k,w,h} = s^{\text{\tiny{PT}}}_{j,k,h} \left( \sum_{i \in \Omega^{\text{\tiny{SP}}}_j} f^{\text{\tiny{SP}}}_{i,j,h} U_{i,w,h} 
+ \sum_{j^+ \in \Omega^{\text{\tiny{PP+}}}_j} f^{\text{\tiny{PP}}}_{j^{+},j,w,h} \right), \\
& f^{\text{\tiny{PP}}}_{j,j^{-},w,h} = s^{\text{\tiny{PP}}}_{j,j^{-},h} \left( \sum_{i \in \Omega^{\text{\tiny{SP}}}_j} f^{\text{\tiny{SP}}}_{i,j,h} U_{i,w,h} 
+ \sum_{j^+ \in \Omega^{\text{\tiny{PP+}}}_j} f^{\text{\tiny{PP}}}_{j^{+},j,w,h} \right), \\
& \sum_{j^{-} \in \Omega^{\text{\tiny{PP--}}}_j} s^{\text{\tiny{PP}}}_{j,j^{-},h}
+ \sum_{k \in \Omega^{\text{\tiny{PT}}}_j} s^{\text{\tiny{PT}}}_{j,k,h} = 1, \quad s^{\text{\tiny{PP}}}_{j,j^-,h},s^{\text{\tiny{PT}}}_{j,k,h} \geq 0, \\
& y^{\text{\tiny{PP}}}_{j,j^-} F^{\text{\tiny{PP,LB}}}_{j,j^{-}} \leq \hspace{-5pt} \sum_{w \in \{1,...,l\}} \hspace{-5pt} f^{\text{\tiny{PP}}}_{j,j^-,w,h}
\leq y^{\text{\tiny{PP}}}_{j,j^-} F^{\text{\tiny{PP,UB}}}_{j,j^{-}}, \\
& y^{\text{\tiny{PT}}}_{j,k} F^{\text{\tiny{PT,LB}}}_{j,k} \leq \sum_{w \in \{1,...,l\}} f^{\text{\tiny{PT}}}_{j,k,w,h}
\leq y^{\text{\tiny{PT}}}_{j,k} F^{\text{\tiny{PT,UB}}}_{j,k}, \\
   	& y_j^{\text{\tiny{P}}} Z_{j}^{\text{\tiny{P,UB}}} \ge y_{i,j}^{\text{\tiny{SP}}} F_{i,j}^{\text{\tiny{SP,UB}}},  \\
   	& y_j^{\text{\tiny{P}}} Z_{j}^{\text{\tiny{P,UB}}} \ge y_{j^+,j}^{\text{\tiny{PP}}} F_{j^+,j}^{\text{\tiny{PP,UB}}}, \\ 
   	& y_j^{\text{\tiny{P}}} Z_{j}^{\text{\tiny{P,UB}}} \ge y_{j,j^-}^{\text{\tiny{PP}}} F_{j,j^-}^{\text{\tiny{PP,UB}}}, \\
   	& y_j^{\text{\tiny{P}}} Z_{j}^{\text{\tiny{P,UB}}} \ge y_{j,k}^{\text{\tiny{PT}}} F_{j,k}^{\text{\tiny{PT,UB}}}, \\
   	& y_j^{\text{\tiny{P}}}  Z_{j}^{\text{\tiny{P,UB}}} \le \sum_{j^+ \in \Omega_j^{\text{\tiny{PP+}}}} y_{j^+,j}^{\text{\tiny{PP}}} F_{j^+,j}^{\text{\tiny{PP,UB}}} + \sum_{i \in \Omega_j^{\text{\tiny{SP}}}} y_{i,j}^{\text{\tiny{SP}}} F_{i,j}^{\text{\tiny{SP,UB}}}, \\
   	& y_j^{\text{\tiny{P}}} Z_{j}^{\text{\tiny{P,UB}}} \le \sum_{j^- \in \Omega_j^{\text{\tiny{PP-}}} } y_{j,j^-}^{\text{\tiny{PP}}} F_{j,j^-}^{\text{\tiny{PP,UB}}} + \sum_{k \in \Omega_j^{\text{\tiny{PT}}}} y_{j,k}^{\text{\tiny{PT}}} F_{j,k}^{\text{\tiny{PT,UB}}}, \\
& \forall j \in \{1,...,r\}, \; \forall j^{-} \in \Omega^{\text{\tiny{PP-}}}_j, \; \forall k \in \Omega^{\text{\tiny{PT}}}_j, \; \forall w \in \{1,...,l\}, \; \forall h \in \{1,...b\}. \nonumber    	
\end{align}
\end{subequations}

 \subsection{Model for the product terminals}
The following group of constraints \eqref{eq:term} represents the submodel for the terminals. Eq. (B.3a-B.3b) are same to Eq. (27-28) in \cite{li2010spp}, except that the lower flow bounds and content bounds are not imposed. Again, Eq. (B.3c-B.3e) are developed in place of the old topology constraints that are invalid for continuous design decisions (i.e., Eq. (23-26) in \cite{li2010spp}).

  \begin{subequations} \label{eq:term}
  \begin{align}
  & \sum_{j \in \Pi^{\text{\tiny{PT}}}_{k}}\sum_{w \in \{1,...,l\}} f^{\text{\tiny{PT}}}_{j,k,w,h}
    + \sum_{i \in \Pi^{\text{\tiny{ST}}}_{k}} f^{\text{\tiny{ST}}}_{i,k,h} \leq y^{\text{\tiny{T}}}_{k} D_{k,h}^{\text{\tiny{UB}}},  
   \\
  & \sum_{j \in \Pi^{\text{\tiny{PT}}}_{k}} f^{\text{\tiny{PT}}}_{j,k,w,h}
      + \sum_{i \in \Pi^{\text{\tiny{ST}}}_{k}} f^{\text{\tiny{ST}}}_{i,k,h} U_{i,w,h} \leq \nonumber \\
  & \left( \sum_{j \in \Pi^{\text{\tiny{PT}}}_{k}}\sum_{w \in \{1,...,l\}} f^{\text{\tiny{PT}}}_{j,k,w,h}
        + \sum_{i \in \Pi^{\text{\tiny{ST}}}_{k}} f^{\text{\tiny{ST}}}_{i,k,h} \right) V_{k,w}^{\text{\tiny{UB}}} \\
       & y_k^{\text{\tiny{T}}} D_{k}^{\text{\tiny{UB}}} \ge y_{i,k}^{\text{\tiny{ST}}} F_{i,k}^{\text{\tiny{ST,UB}}} \\
       &  y_k^{\text{\tiny{T}}} D_{k}^{\text{\tiny{UB}}} \ge y_{j,k}^{\text{\tiny{PT}}} F_{j,k}^{\text{\tiny{PT,UB}}}, \\
    & y_k^{\text{\tiny{T}}} D_{k}^{\text{\tiny{UB}}} \le \sum_{i \in \Pi_{k}^{\text{\tiny{ST}}}} y_{i,k}^{\text{\tiny{ST}}} F_{i,k}^{\text{\tiny{ST,UB}}} +\sum_{k \in \Pi_{k}^{\text{\tiny{PT}}}} y_{j,k}^{\text{\tiny{PT}}} F_{j,k}^{\text{\tiny{PT,UB}}} \\              
 &   \forall k \in \{1,...,m\}, \; \forall w \in \{1,...,l\}, \; \forall h \in \{1,...,b\}.   \nonumber   
  \end{align}
  \end{subequations}
 
The modified stochastic pooling model can be stated as:
\begin{align*}
\text{minimize} & \quad \text{objective} \\
s.t. \quad    & \text{Eq. (B.1a-B.1f), Eq. (B.2a-B.2k), Eq. (B.3a-B.3e)}, \\
         & y_i^{\text{\tiny{S}}}, y_{j}^{\text{\tiny{P}}}, y_k^{\text{\tiny{T}}} \in \{0,1\} \;\; \text{or} \;\; [0,1], \\        
         & y_{i,j}^{\text{\tiny{SP}}}, y_{i,k}^{\text{\tiny{ST}}}, y_{j,j^-}^{\text{\tiny{PP}}}, y_{j,k}^{\text{\tiny{PT}}} \in [0,1], \\        
        & \text{all flow rates are nonnegative,} \\
        & \text{redudant constraints for accelerating global optimizaiton (Eq. (38-39) in \cite{li2010spp})}. \\
\end{align*}
 
The objective can be negative net present value, or negative annualized profit, as specified in \cite{li2010spp}.

\setcounter{equation}{0}
\renewcommand\theequation{C.\arabic{equation}}

\bibliographystyle{elsarticle-num}
\clearpage
\bibliography{references}

\begin{thebibliography}{10}
\expandafter\ifx\csname url\endcsname\relax
  \def\url#1{\texttt{#1}}\fi
\expandafter\ifx\csname urlprefix\endcsname\relax\def\urlprefix{URL }\fi
\expandafter\ifx\csname href\endcsname\relax
  \def\href#1#2{#2} \def\path#1{#1}\fi

\bibitem{floudas1995nam}
C.~A. Floudas, Nonlinear and Mixed-Integer Optimization: Fundamentals and
  Applications, Oxford University Press, 1995.

\bibitem{tawarmalani2004goo}
M.~Tawarmalani, N.~V. Sahinidis, {Global optimization of mixed-integer
  nonlinear programs: A theoretical and computational study}, Mathematical
  Programming 99 (2004) 563--591.

\bibitem{li2010spp}
X.~Li, E.~Armagan, A.~Tomasgard, P.~I. Barton, Stochastic pooling problem for
  natural gas production network design and operation under uncertainty, AIChE
  Journal 57 (2011) 2120--2135.

\bibitem{li2011}
X.~Li, A.~Tomasgard, P.~I. Barton, {Nonconvex generalized Benders decomposition
  for Stochastic Separable Mixed-Integer Nonlinear Programs}, Journal of
  Optimization Theory and Applications 151 (2011) 425--454.

\bibitem{sahinidis1991cpo}
N.~Sahinidis, I.~Grossmann, Convergence properties of generalized {B}enders
  decomposition, Computers and Chemical Engineering 15~(7) (1991) 481 -- 491.

\bibitem{Berman1993}
O.~Berman, N.~Ashrafi, Optimization models for reliability of modular software
  systems, IEEE Trans. Softw. Eng. 19 (1993) 1119--1123.

\bibitem{floudas1999hot}
C.~A. Floudas, P.~M. Pardalos, C.~S. Adjiman, W.~R. Esposito, Z.~Gumus, S.~T.
  Harding, J.~L. Klepeis, C.~A. Meyer, C.~A. Schweiger, Handbook of Test
  Problems for Local and Global Optimization, Kluwer Academic Publishers, 1999.

\bibitem{adjiman2000goo}
C.~S. Adjiman, I.~P. Androulakis, C.~A. Floudas, Global optimization of
  mixed-integer nonlinear problems, {AIChE} Journal 46~(9) (2000) 1769--1797.

\bibitem{westerlund1994oop}
T.~Westerlund, F.~Pettersson, I.~E. Grossmann, Optimization of pump
  configurations as a {MINLP} problem, Computers and Chemical Engineering
  18~(9) (1994) 845--858.

\bibitem{Duran1986}
M.~Duran, I.~Grossmann, A mixed-integer nonlinear programming algorithm for
  process systems synthesis, AIChE Journal 32 (1986) 592--606.

\bibitem{Bloom1983}
J.~Bloom, Solving an electricity generating capacity expansion problem by
  generalized benders' decomposition, Operations Research 31 (1983) 84--100.

\bibitem{Falksoland1969}
J.~Falk, R.~Soland, An algorithm for seperable nonconvex programming problems,
  Management Science 15 (1969) 550--569.

\bibitem{Soland1971}
R.~Soland, An algorithm for seperable nonconvex programming problems {II}:
  {N}onconvex constraints, Management Science 17 (1971) 759--772.

\bibitem{mccormick1976cog}
G.~P. McCormick, Computability of global solutions to factorable nonconvex
  programs: {P}art {I} - {C}onvex underestimating problems, Mathematical
  Programming 10 (1976) 147--175.

\bibitem{Fisher1981}
M.~Fisher, {L}agrangian relaxation methods for solving integer programming
  problems, Management Science 27 (1981) 1--18.

\bibitem{cynthia1998}
C.~Barnhart, E.~Johnson, Branch and price: Column generation for solving huge
  integer programs, Operations Research 46 (1998) 316--329.

\bibitem{Guignard1987}
M.~Guignard, S.~Kim, {L}agrangean decomposition:{A} model yielding stronger
  lagrangean bounds, Mathematical Programming 39 (1987) 215--228.

\bibitem{Held1974}
M.~Held, P.~Wolfe, H.~Crowder, Validation of subgradient optimization,
  Mathematical Programming 6 (1974) 62--88.

\bibitem{van1983cross}
T.~J. Van~Roy, Cross decomposition for mixed integer programming, Mathematical
  programming 25~(1) (1983) 46--63.

\bibitem{OgbeLi2016}
E.~Ogbe, X.~Li, A new cross decomposition method for stochastic mixed-integer
  linear programming, European Journal of Operational Research (2016)\href
  {http://dx.doi.org/10.1016/j.ejor.2016.08.005}
  {\path{doi:10.1016/j.ejor.2016.08.005}}.

\bibitem{Zamora1999}
J.~Zamora, I.~E. Grossmann, A branch and contract algorithm for problems with
  concave univariate, binear and linear fractional terms, Journal of Global
  Optimization 14 (1999) 217 -- 249.

\bibitem{ryoo1996bat}
H.~S. Ryoo, N.~V. Sahinidis, A branch-and-reduce approach to global
  optimization, Journal of Global Optimization 8 (1996) 107--138.

\bibitem{misener-floudas2014}
R.~Misener, C.~A. Floudas, A framework for globally optimizing mixed-integer
  signomial programs, Journal of Optimization Theory and Applications 161
  (2014) 905--932.

\bibitem{Karuppiah2008}
K.~R., G.~I. E., A {L}agrangean based branch-and-cut algorithm for global
  optimization of nonconvex mixed-integer nonlinear programs with decomposable
  structures, Journal of Global Optimization 41 (2008) 163--186.

\bibitem{misener-floudas:ANTIGONE:2014}
R.~Misener, C.~A. Floudas, {ANTIGONE: Algorithms for coNTinuous / Integer
  Global Optimization of Nonlinear Equations}, Journal of Global Optimization
  59 (2014) 503--526.

\bibitem{sahinidis2004ouu}
N.~Sahinidis, Optimization under uncertainty: {S}tate-of-the-art and
  opportunities., Computers and Chemical Engineering 28 (2004) 971--983.

\bibitem{birge2010its}
J.~Birge, F.~Louveaux, Introduction to Stochastic Programming, Springer, New
  York, 2010.

\bibitem{vanslyke1969}
R.~M. Van~Slyke, R.~Wets, L-shaped linear programs with applications to optimal
  control and stochastic programming, SIAM Journal on Applied Mathematics 17
  (1969) 638--663.

\bibitem{chen2011}
Y.~Chen, T.~A. Adams, P.~I. Barton, Optimal design and operation of static
  energy polygeneration systems, Industrial \& Engineering Chemistry Research
  50~(9) (2011) 5099--5113.

\bibitem{frank2015710}
S.~M. Frank, S.~Rebennack, Optimal design of mixed ac–dc distribution systems
  for commercial buildings: A nonconvex generalized benders decomposition
  approach, European Journal of Operational Research 242~(3) (2015) 710 -- 729.

\bibitem{benders1962pps}
J.~Benders, Partitioning procedures for solving mixed-variables programming
  problems, Numerische Mathematik 4 (1962) 238--252.

\bibitem{geoffrion1972gbd}
A.~M. Geoffrion, Generalized {B}enders decomposition, Journal of Optimization
  Theory and Applications 10~(4) (1972) 237--260.

\bibitem{Karuppiah2008lbb}
R.~Karuppiah, I.~Grossmann, A {L}agrangean based branch-and-cut algorithm for
  global optimization of nonconvex mixed-integer nonlinear programs with
  decomposable structures, Journal of Global Optimization 41 (2008) 163--186.

\bibitem{Shim2013}
Y.~Shim, M.~Fodstad, S.~A. Gabriel, A.~Tomasgard, A branch-and-bound method for
  discretely-constrained mathematical programs with equilibrium constraints,
  Annals of Operations Research 210~(1) (2013) 5--31.

\bibitem{Rohit2015}
R.~Kannan, P.~Barton, A software framework for the global optimization of
  nonconvex two-stage stochastic programs, Oral Presentation at AIChE Annual
  Meeting, {Salt Lake City, UT} (November 2015).

\bibitem{vanroy1984}
T.~J. Van~Roy, A cross decomposition algorithm for capacitated facility
  location, Operations Research 34 (1986) 145--163.

\bibitem{Holmberg1990}
K.~Holmberg, On the convergence of cross decomposition, Mathematical
  Programming 47 (1990) 269 -- 296.

\bibitem{Holmberg1997}
K.~Holmberg, Mean value cross decomposition applied to integer programming
  problems, European Journal of Operational Research 97 (1997) 124--138.

\bibitem{Deep1993}
N.~Deep, S.~Shahidehpour, Cross decomposition for multi-area optimal reactive
  power planning, IEEE transactions on power systems 8 (1993) 1539--1544.

\bibitem{Mitra2016}
S.~Mitra, P.~Garcia-Herreros, I.~E. Grossmann, A cross-decomposition scheme
  with integrated primal--dual multi-cuts for two-stage stochastic programming
  investment planning problems, Mathematical Programming 157~(1) (2016)
  95--119.

\bibitem{Caroe199937}
C.~C. Car{\o}e, R.~Schultz, Dual decomposition in stochastic integer
  programming, Operations Research Letters 24~(1–2) (1999) 37 -- 45.

\bibitem{lasdon1970spp}
L.~Lasdon, Optimization Theory for Large Systems, 1st Edition, Macmillian,
  Toronto, Ontario, 1970.

\bibitem{birge1988mat}
J.~Birge, F.~V. Louveaux, A multicut algorithm for two-stage stochastic linear
  programs, European Journal of Operational Research 34~(3) (1988) 384 -- 392.

\bibitem{OgbeLi2015b}
E.~Ogbe, X.~Li, Multicolumn-multicut cross decomposition for stochastic
  mixed-integer linear programming, Computer Aided Chemical Engineering 37
  (2015) 737--742.

\bibitem{Maranas1996}
C.~Maranas, C.~A. Floudas, Global optimization in generalized geometric
  programming, Computers and Chemical Engineering 21~(4) (1997) 351--369.

\bibitem{haverly1978sob}
C.~Haverly, Studies of the behaviour of recursion for the pooling problem., ACM
  SIGMAP Bulletin 25 (1978) 29--32.

\bibitem{haverly1979bor}
C.~Haverly, Behaviour of recursion model - more studies., ACM SIGMAP Bulletin
  26 (1979) 22--28.

\bibitem{selot2008sop}
A.~Selot, L.~K. Kuok, M.~Robinson, T.~L. Mason, P.~I. Barton, A short-term
  operational planning model for natural gas production systems., AIChE Journal
  54~(2) (2008) 495--515.

\bibitem{Brook1988}
A.~Brook, D.~Kendrick, A.~Meeraus, Gams, a user's guide, SIGNUM Newsletter
  23~(3-4) (1988) 10--11.

\bibitem{MATLAB2014a}
MATLAB, version 8.3 (R2014a), The MathWorks Inc., Natick, Massachusetts, 2014.

\bibitem{gdxmrw2011}
M.~Ferris, S.~Dirkse, , J.~Ramakrishnan, {GDXMRW: Interfacing GAMS and MATLAB},
  http://research.cs.wisc.edu/math-prog/matlab.html, accessed in August 2016.

\bibitem{Achterberg2009}
T.~Achterberg, Scip: solving constraint integer programs, Mathematical
  Programming Computation 1~(1) (2009) 1--41.

\bibitem{Drud1994}
A.~S. Drud, {CONOPT} – a large-scale {GRG} code, {ORSA} Journal on Computing
  6 (1994) 207--216.

\bibitem{ibmiic}
{IBM}, {IBM ILOG CPLEX OPTIMIZER: High-performance mathematical programming
  engine}. (2014).

\bibitem{gatzke2002coc}
E.~P. Gatzke, J.~E. Tolsma, P.~I. Barton, Construction of convex relaxations
  using automated code generation technique, Optimization and Engineering 3
  (2002) 305--326.

\end{thebibliography}

\newpage

\end{document}